\pgfplotsset{my style/.append style={axis x line=middle, axis y line=middle, xlabel={$x$}, ylabel={$y$}, axis equal }}
\theoremstyle{plain}
\newtheorem{theorem}{Theorem}[section]
\newtheorem{proposition}[theorem]{Proposition}
\newtheorem{lemma}[theorem]{Lemma}
\newtheorem{corollary}[theorem]{Corollary}
\newtheorem{remark}[theorem]{Remark}
\newtheorem{exl}[theorem]{Example}
\theoremstyle{definition}
\newcommand{\ep}{\varepsilon}
\newcommand{\E}{\mathbb{E}}
\newcommand{\N}{\mathbb{N}}
\newcommand{\R}{\mathbb{R}}
\newcommand{\PP}{\mathbb{P}}
\newcommand{\I}{{\rm I}}
\newcommand{\II}{{\rm II}}
\tikzstyle{state}=[circle,thick,draw=black!80,minimum size=30pt]
\tikzstyle{rand}=[circle,thick,draw=black!80,fill=black,inner sep=0pt,minimum size=7pt]
\begin{document}
\author[a]{Galit Ashkenazi-Golan}
\author[b]{J\'{a}nos Flesch} 
\author[c]{Arkadi Predtetchinski}
\author[d]{Eilon Solan}

\affil[a]{London School of Economics and Political Science,
Houghton Street
London
WC2A 2AE, UK}
\affil[b]{Department of Quantitative Economics,
Maastricht University, P.O.Box 616, 6200 MD, The Netherlands}
\affil[c]{Department of Economics, Maastricht University, P.O.Box 616, 6200 MD,
The Netherlands}
\affil[d]{School of Mathematical Sciences, Tel-Aviv University, Tel-Aviv, Israel, 6997800}

\title{Regularity of the minmax value and equilibria in multiplayer Blackwell games\thanks{Ashkenazi-Golan acknowledges the support of the Israel Science Foundation, grants \#217/17 and \#722/18, and the NSFC-ISF Grant \#2510/17. Solan acknowledges the support of the Israel Science Foundation, grant \#217/17. This work has been partly supported by COST Action CA16228 European Network for Game Theory.}}
\maketitle

\begin{abstract}
\noindent 

A real-valued function $\varphi$ that is defined over all Borel sets of a topological space is \emph{regular} if for every Borel set $W$, $\varphi(W)$ is the supremum of $\varphi(C)$, over all closed sets $C$ that are contained in $W$, and the infimum of $\varphi(O)$, over all open sets $O$ that contain $W$.

We study Blackwell games with finitely many players. We show that when each player has a countable set of actions and the objective of a certain player is represented by a Borel winning set, that player's minmax value is regular.

We then use the regularity of the minmax value to establish the existence of $\ep$-equilibria in two distinct classes of Blackwell games. One is the class of $n$-player Blackwell games where each player has a finite action space and an analytic winning set, and the sum of the minmax values over the players exceeds $n-1$. The other class is that of Blackwell games with bounded upper semi-analytic payoff functions, history-independent finite action spaces, and history-independent minmax values. 
For the latter class, we obtain a characterization of the set of equilibrium payoffs.
\end{abstract}

\noindent\textbf{Keywords:} Blackwell games, determinacy, value, equilibrium, regularity.\medskip

\noindent\textbf{AMS classification code:} Primary: \textsc{91A44} (Games involving topology, set theory, or logic). Secondary: \textsc{91A20} (Multistage and repeated games).\medskip

\section{Introduction}
Blackwell games (Blackwell \cite{Blackwell69}) are dynamic multiplayer simultaneous-move games where the action sets of the players may be history dependent, and the payoff function is an arbitrary Borel-measurable function of the play. When the payoff function of a player is given by the characteristic function of a given set $W$, we say that $W$ is the \emph{winning set} of the player. These games subsume several familiar classes of dynamic games: repeated games with the discounted payoff or the limiting average payoff (e.g., Sorin \cite{Sorin92}, Mailath and Samuelson \cite{{Mailath06}}), games with perfect information (e.g., Gale and Stewart \cite{Gale53}), and graph games arising in the computer science applications (e.g., Apt and Gr\"{a}del \cite{AptGradel12}, Bruy\`{e}re \cite{Bruyere17, Bruyere21}, Chatterjee and Henzinger \cite{Chatterjee12}).

While two-player zero-sum Blackwell games and Blackwell games with perfect information are quite well understood (see, e.g., Martin \cite{Martin75, Martin98}, Mertens \cite{Mertens86}, Kuipers, Flesch, Schoenmakers, and Vrieze \cite{Kuipers21}), general multiplayer nonzero-sum Blackwell games have so far received relatively little attention.

The goal of this paper is to introduce a new technique to the study of multiplayer Blackwell games: regularity of the minmax value, along with a number of related approximation results. In a nutshell, the technique amounts to the approximation of the minmax value of a winning Borel set using a closed subset. This approach allows us to establish existence of $\ep$-equilibria in two distinct classes of Blackwell games.\medskip

\noindent\textsc{Regularity and approximation results:} A real-valued function $\varphi$ that is defined over all Borel sets of a certain space is \emph{inner regular} if for every Borel set $W$, $\varphi(W)$ is the supremum of $\varphi(C)$, over all closed sets $C$ that are contained in $W$. The function $\varphi$ is \emph{outer regular} if for every Borel set $W$ it is the infimum of $\varphi(O)$, over all open sets $O$ that contain $W$.
The function $\varphi$ is \emph{regular} if it is both inner regular and outer regular.
Borel probability measures on metric spaces are one example of a regular function (see, e.g., Kechris \cite[Theorems 17.10 and 17.11]{Kechris95}).

When restricted to two-player zero-sum Blackwell games with finite action sets and Borel-measurable winning set for Player~1, the value function is known (Martin \cite{Martin98}) to be regular. This result was extended to two-player zero-sum stochastic games by Maitra, Purves, and Sudderth \cite{Maitra92}.

We show that in multiplayer Blackwell games with countable action sets and Borel winning sets, the minmax value of all players is regular. We thus extend the regularity result of Martin \cite{Martin98} in terms of both the number of actions (countable versus finite) and the number of players (finite versus two).

A related approximation result concerns the case when a player's objective is represented by a bounded Borel-measurable payoff function. Denote by $v_i(f)$ player~$i$'s minmax value when her payoff function is $f$. We show that $v_i(f)$ is the supremum of $v_i(g)$ over all bounded limsup functions $g \leq f$, and the infimum of $v_i(g)$ over all bounded limsup function $g \geq f$. A \emph{limsup function} is a function that can be written as the limit superior of a sequence of rewards assigned to the nodes of the game tree. 
This too, is an extension of results by Maitra, Purves, and Sudderth \cite{Maitra92} and Martin \cite{Martin98} for two-player games to multiplayer games. If, moreover, the player's minmax value is the same in every subgame, one obtains an approximation from below by an upper semi-continuous function, and an approximation from above by a lower semi-continuous function.\medskip

\noindent\textsc{Existence of $\ep$-equilibria:} The main contribution of the paper is the application of the regularity of the minmax value to the problem of existence of an $\ep$-equilibrium in multiplayer Blackwell games. We establish the existence in two distinct classes of Blackwell games.

One is the class of $n$-player Blackwell games with bounded upper semi-analytic payoff functions, history-independent finite action spaces, and history-independent minmax values. The latter assumption means that every player's minmax value is the same in each subgame. Under these assumptions, for each $\ep > 0$, there is an $\ep$-equilibrium with a pure path of play. 

A prominent sufficient condition for the minmax value to be history-independent the is that payoff be tail-measurable. Roughly speaking, tail-measurability amounts to the requirement that the payoff is unaffected by a change of the action profile in any finite number of stages. We thus obtain the existence of $\ep$-equilibria in Blackwell games with history-independent finite action spaces and bounded, upper semi-analytic, and tail-measurable payoff functions. 

The second class of games for which we derive an existence result is $n$-player Blackwell games where each player has a finite action space at each history, her objective is represented by an analytic winning set, and the sum of the minmax values over the players exceeds $n-1$. Under these conditions we show that there exists a play that belongs to each player's winning set; any such play induces a $0$-equilibrium. At the heart of the proof is an approximation of each player's minmax value by the minmax value of a closed subset of the player's winning set.  

The key idea of the proof of the first result is to consider an auxiliary Blackwell game with winning sets, where the winning set of player $i$ is the set of player $i$'s $\ep$-individually rational plays: the plays that yield player $i$ a payoff no smaller then her minmax value minus $\ep$. We show that, in the thus-defined auxiliary Blackwell game, each player's minmax value equals 1, and apply the second\textbf{} result.


The question whether $\ep$-equilibria exist in multiplayer Blackwell games is a largely uncharted territory. An important benchmark is the result of Mertens and Neyman (see Mertens \cite{Mertens86}): all games of perfect information with bounded Borel-measurable payoff functions admit an $\ep$-equilibrium for every $\ep > 0$. Zero-sum Blackwell games (where at least one of the two players has a finite set of actions) are known to be determined since the seminal work of Martin \cite{Martin98}. Shmaya \cite{Shmaya11} extends the latter result by showing the determinacy of zero-sum games with eventual perfect monitoring, and Arieli and Levy \cite{Arieli15} extend Shmaya's result to stochastic signals. 

Only some special classes of multiplayer dynamic games have been shown to have an $\ep$-equilibrium. These include stochastic games with discounted payoffs (see, e.g., the survey by Ja\'{s}kiewicz and Nowak \cite{Nowak16}), two-player stochastic games with the limiting average payoff (Vieille \cite{Vieille00I,Vieille00II}), and graph games with classical computer science objectives (e.g., Secchi and Sudderth \cite{Sechi}, Chatterjee \cite{Chatterjee04,Chatterjee05}, Bruy\`{e}re \cite{Bruyere21}, Ummels, Markey, Brenguier, and Bouyer \cite{Ummels15}). 

A companion paper (\cite{AFPS}) establishes the existence of $\ep$-equilibria in Blackwell games with countably many players, finite action sets, and bounded, Borel-measurable, and tail-measurable payoff functions. 
The present paper departs from \cite{AFPS} in two dimensions. Firstly, it invokes a new proof technique, the regularity of the minmax value. Secondly, it makes different assumptions on the primitives. The second of our two existence results (Theorem \ref{theorem:sumofprob}) has, in fact, no analogue in \cite{AFPS}. The first (Theorem \ref{theorem:minmax_indt}) applies to a larger class of payoff functions than does the main result in \cite{AFPS}: it only requires players' minmax values to be history-independent. While tail-measurability of the payoff functions is a sufficient condition for history-independence of the minmax values, it is by no means a necessary condition. Furthermore, Borel-measurability imposed in \cite{AFPS} is relaxed here to upper semi-analyticity. On the other hand, \cite{AFPS} has a countable rather than a finite set of players, something that the methods developed here do not allow for.


\noindent\textsc{Characterisation of equilibrium payoffs:} An equilibrium payoff is an accumulation point of the expected payoff vectors of $\ep$-equilibria, as $\ep$ tends to $0$. We establish a characterisation of equilibrium payoffs in games with bounded upper semi-analytic payoff functions, history-independent finite action spaces, and history-independent minmax values. 

In repeated games with patient players the folk theorem asserts that under proper conditions, the set of limiting average equilibrium payoffs (or the limit set of equilibrium payoffs, as the discount factor goes to 0 or the horizon increases to infinity) is the set of all vectors that are individually rational and lie in the convex hull of the range of the stage payoff function (see, e.g., Aumann and Shapley \cite{Aumann94}, Sorin \cite{Sorin92}, or Mailath and Samuelson \cite{{Mailath06}}). Our result identifies the set of equilibrium payoffs of a Blackwell game as the set of all vectors that lie in the convex hull of the set of feasible and individually rational payoffs. The intuition for this discrepancy is that in standard repeated games, a low payoff in one stage can be compensated by a high payoff in another stage, therefore payoff vectors that are convex combinations of the stage payoff function can be equilibrium payoffs as long as this convex combination of payoffs is individually rational. In particular, these combinations can place some positive weight on payoff vectors that are not individually rational. In Blackwell games, however, the payoff is obtained only at the end of the game, hence only plays that generate individually rational payoffs can be taken into account when constructing equilibria.

Our characterization of the set of equilibrium payoffs is related to the rich literature on the folk theorem, and the study of the minmax value is instrumental to this characterizaion (see, e.g., the folk theorems in Fudenberg and Maskin \cite{Fudenberg86}, Mailath and Samuelson \cite{Mailath06}, or H\"{o}rner, Sugaya, Takahashi, and Vieille \cite{Horner11}). The minmax value of a player would often be used in the proofs of equilibrium existence to construct suitable punishments for a deviation from the supposed equilibrium play (as is done, for instance, in Aumann and Shapley \cite{Aumann94}, Rubinstein \cite{Rubinstein94}, Fudenberg and Maskin \cite{Fudenberg86}, and Solan \cite{Solan01}).\medskip

The paper is structured as follows. Section \ref{secn.games} describes the class of Blackwell games. Section \ref{secn.approx} is devoted to the regularity of the minmax value and related approximation theorems. Section \ref{secn.appl} applies these tools to the problem of existence of equilibrium. Section~\ref{secn.folk} is devoted to the characterisation of equilibrium payoffs. Section \ref{secn.tail} discusses the implications of the results for games with tail-measurable payoffs. Section \ref{secn.disc} contains a discussion, concluding remarks, and open questions.

\section{Blackwell games}\label{secn.games}
\noindent\textbf{Blackwell games:} An $n$-\textit{player Blackwell game} is a tuple $\Gamma = (I, A, H, (f_{i})_{i \in I})$. The elements of $\Gamma$ are as follows.

The set of players is $I$, a finite set of cardinality $n$. For a player $i \in I$ we write $-i$ to denote the set of $i$'s opponents, $I \setminus \{i\}$. 

The set $A$ is a countable set and $H \subseteq \cup_{t \in \N}A^{t}$ is the game tree (throughout the paper $\N = \{0,1,\ldots\}$). Elements of $H$ are called histories. The set $H$ is assumed to have the following properties: (a) $H$ contains the empty sequence, denoted $\oslash$; (b) a prefix of an element of $H$ is an element of $H$; that is, if for some $h \in \cup_{t \in \N}A^{t}$ and $a \in A$ the sequence $(h,a)$ is an element of $H$, so is $h$; (c) for each $h \in H$ there is an element $a \in A$ such that $(h, a) \in H$; we define $A(h) := \{a \in A: (h,a) \in H\}$; and (d) for each $h \in H$ and each $i \in I$ there exists a set $A_{i}(h)$ such that $A(h) = \prod_{i \in I}A_{i}(h)$. The set $A_{i}(h)$ is called player $i$'s set of actions at history $h$, and $A(h)$ the set of action profiles at $h$. 

Conditions (a), (b), and (c) above say that $H$ is a pruned tree on $A$. 
Condition (c) implies that the game has infinite horizon.
Let $H_{t} := H \cap A^{t}$ denote the set of histories in stage $t$.     

An infinite sequence $(a_0,a_1,\ldots) \in A^{\N}$ such that $(a_0,\ldots,a_t) \in H$ for each $t \in \N$ will be called a \emph{play}. The set of plays is denoted by $[H]$.
This is the set of infinite branches of $H$. For $h \in H$ let $O(h)$ denote the set of all plays of $\Gamma$ having $h$ as a prefix. We endow $[H]$ with the topology generated by the basis consisting of the sets $\{O(h):h \in H\}$. The space $[H]$ is Polish. For $t \in \N$ let $\mathcal{F}_{t}$ be the sigma-algebra on $[H]$ generated by the sets $\{O(h):h \in H_{t}\}$. The Borel sigma-algebra of $[H]$ is denoted by $\mathscr{B}$. It is the minimal sigma-algebra containing the topology. A subset $S$ of $[H]$ is \emph{analytic} if it is the image of a continuous function from the Baire space $\N^\N$ to $[H]$. Each Borel set is analytic.

Each analytic set is universally measurable. Recall that a set $S \subseteq [H]$ is said to be universally measurable if (Kechris \cite[Section 17.A]{Kechris95}), for every Borel probability measure $\PP$ on $[H]$, there exist Borel sets $B,Z \in \mathscr{B}$ such that $S \bigtriangleup B \subseteq Z$ and $\PP(Z) = 0$; here $S \bigtriangleup B = (S \setminus B) \cup (B \setminus S)$ is the symmetric difference of the sets $S$ and $B$. 

The last element of the game is a vector $(f_i)_{i \in I}$, where $f_i : [H] \to \mathbb{R}$ is player $i$'s \emph{payoff function}. The most general class of payoff functions we allow for are bounded upper semi-analytic functions. A function $f_i : [H] \to \mathbb{R}$ is said to be \emph{upper semi-analytic} if, for each $r \in \R$, the set $\{p \in [H]: r \leq f(p)\}$ is analytic. In particular, the indicator function $1_{S}$ of a subset $S \subseteq [H]$ of plays is upper semi-analytic if and only if $S$ is an analytic set. Each Borel-measurable function in upper semi-analytic. Note that a bounded upper semi-analytic function is universally measurable, i.e., for each open set $U \subseteq \R$, the set $f^{-1}(U) \subseteq [H]$ is universally measurable (see, e.g., Chapter 7 in Bertsekas and Shreve \cite{Bertsekas96}).

The play of the game starts at the empty history $h_0 = \oslash$. Suppose that by a certain stage $t \in \N$ a history $h_t \in H_t$ has been reached. Then in stage $t$, the players simultaneously choose their respective actions; thus player $i \in I$ chooses an action $a_{i,t} \in A_{i}(h_{t})$. This results in the stage $t$ action profile $a_{t} = (a_{i,t})_{i \in I} \in A(h_{t})$. Once chosen, the actions are revealed to all players, and the history $h_{t+1} = (h_{t},a_{t})$ is reached. The result of the infinite sequence of choices is the play $p = (a_0,a_1,\ldots)$, an element of $[H]$. Each player $i \in I$ receives the corresponding payoff $f_i(p)$.  

Given a Blackwell game $\Gamma$ and a history $h \in H$, the \textit{subgame} of $\Gamma$ starting at $h$ is the Blackwell game $\Gamma_h = (I, A, H_h, (f_{i,h})_{i \in I})$. The set $H_h$ of histories of $\Gamma_h$ consists of finite sequences $g \in \bigcup_{t \in \N} A^{t}$ such that $hg \in H$, where $hg$ is the concatenation of $h$ and $g$. The payoff function $f_{i,h} : [H_h] \to \R$ is the composition $f_i \circ s_h$, with $s_h : [H_h] \to [H]$ given by $p \mapsto hp$, where $hp$ is the concatenation of $h$ and $p$. Note that $\Gamma_\oslash$ is just the game $\Gamma$ itself. 

The Blackwell game $\Gamma$ is said to have \textit{history-independent action sets} if $A_{i}(h) = A_{i}(\oslash)$ for each history $h \in H$ and each player $i \in I$; the common action set is simply denoted by $A_i$. If $\Gamma$ has history-independent action sets, then the set of its histories is $H = \cup_{t \in \N}A^{t}$, and the set of plays in $\Gamma$ is $[H] = A^{\N}$. A Blackwell game with history-independent action sets can be described as a tuple $(I,(A_{i},f_{i})_{i \in I})$. 
\medskip

\noindent\textbf{Strategies and expected payoffs:} A strategy for player $i\in I$ is a function $\sigma_i$ assigning to each history $h \in H$ a probability distribution $\sigma_{i}(h)$ on the set $A_{i}(h)$. The set of player $i$'s strategies is denoted by $\Sigma_i$. We also let $\Sigma_{-i} := \prod_{j \in -i} \Sigma_{j}$ and $\Sigma := \prod_{i \in I} \Sigma_{i}$. Each strategy profile $\sigma=(\sigma_i)_{i\in I}$ induces a unique probability measure on the Borel sets of $[H]$, denoted $\PP_{\sigma}$. The corresponding expectation operator is denoted $\E_{\sigma}$. In particular, $\E_{\sigma}[f_{i}]$ denotes an expected payoff to player $i$ in the Blackwell game under the strategy profile $\sigma$. It is well defined under the maintained assumptions, namely boundedness and upper semi-analyticity of $f_{i}$.  

Take a history $h \in H_t$ in stage $t$. A strategy profile $\sigma \in \Sigma$ in $\Gamma$ induces the strategy profile $\sigma_h$ in $\Gamma_h$ defined as $\sigma_h(g)=\sigma(hg)$ for each history $g \in H_h$. Let us define $\E_{\sigma}(f_i \mid h)$ as the expected payoff to player $i$ in the Blackwell game $\Gamma_h$ under the strategy profile $\sigma_h$: that is,
$\E_{\sigma}(f_i \mid h) := \E_{\sigma_h}(f_{i,h})$. Note that $\E_{\sigma}(f_i \mid h)$, when viewed as an $\mathcal{F}_{t}$-measurable function on $[H]$, is a conditional expectation of $f_i$ with respect to the measure $\PP_{\sigma}$ and the sigma-algebra $\mathcal{F}_{t}$; whence our choice of notation.\medskip

\noindent\textbf{Minmax value:} Consider a Blackwell game $\Gamma$, and suppose that player $i$'s payoff function $f_i$ is bounded and upper semi-analytic. Player $i$'s \textit{minmax value} is defined as 
\[v_i(f_i) := \inf_{\sigma_{-i} \in \Sigma_{-i}}\sup_{\sigma_{i} \in \Sigma_{i}} \mathbb{E}_{\sigma_{-i},\sigma_{i}}(f_i).\]
Whenever $f_i = 1_{W_i}$ is an indicator of an analytic set $W_i \subseteq [H]$ we write $v_i(W_i)$ for $v_i(1_{W_i})$. 

Player $i$'s minmax value is said to be \textit{history-independent} if her minmax value in the subgame $\Gamma_h$ equals that in the game $\Gamma$, for each history $h \in H$.


\section{Regularity and approximation theorems}\label{secn.approx}
In this section we state the regularity property of the minmax: the minmax value of a Borel winning set can be approximated from below by the minmax value of closed subset and from above by the minmax value of an open superset. We also describe two related approximation results: the minmax value of a bounded Borel-measurable payoff function can be approximated from below and from above by limsup functions. If, in addition, the minmax values are history-independent, then one can choose the approximation from below to be upper semicontinuous, and the approximation from above to be lower semicontinuous.
The proofs of all results are detailed in the appendix. 

\begin{theorem} {\rm (Regularity of the minmax value)}\label{thrm:reg}
Consider a Blackwell game. Suppose that player $i$'s objective is given by a winning set $W_i \subseteq [H]$. Suppose that $W_i$ is Borel. Then 
\begin{align*}
v_i(W_i) &= \sup\{v_i(C):C\subseteq W_i, C \text{ is closed}\}=\inf\{v_i(O):O\supseteq W_i, O \text{ is open}\}.
\end{align*}
\end{theorem}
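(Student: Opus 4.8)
The plan is to dispose of the two inequalities that follow from monotonicity alone, and then to prove the two genuine approximation statements by fixing a near-optimal strategy of one side and reducing to the regularity of a \emph{one-controller} value. Since $\PP_\sigma(B)\le\PP_\sigma(B')$ whenever $B\subseteq B'$, the value $v_i$ is monotone in its argument, so every closed $C\subseteq W_i$ has $v_i(C)\le v_i(W_i)$ and every open $O\supseteq W_i$ has $v_i(O)\ge v_i(W_i)$; hence $\sup_C v_i(C)\le v_i(W_i)\le\inf_O v_i(O)$. It remains to show that for every $\ep>0$ there is a closed $C\subseteq W_i$ with $v_i(C)\ge v_i(W_i)-\ep$ (inner regularity) and an open $O\supseteq W_i$ with $v_i(O)\le v_i(W_i)+\ep$ (outer regularity). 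Throughout I read $v_i$ as the value of the two-player zero-sum game in which player $i$ is the maximizer, who ``wins'' on the target set, and the coalition $-i$ is the minimizer; the number of players thus plays no essential role.

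For outer regularity, pick a coalition profile $\sigma_{-i}^*$ with $M(W_i)\le v_i(W_i)+\ep/2$, where $M(B):=\sup_{\sigma_i\in\Sigma_i}\PP_{\sigma_{-i}^*,\sigma_i}(B)$; such $\sigma_{-i}^*$ exists directly from $v_i(W_i)=\inf_{\sigma_{-i}}\sup_{\sigma_i}\PP_\sigma(W_i)$. Against fixed $\sigma_{-i}^*$, the map $B\mapsto M(B)$ is the value of a single-controller (gambling/MDP) problem in which only player $i$ is active. Granting that $M$ is outer regular, choose an open $O\supseteq W_i$ with $M(O)\le M(W_i)+\ep/2$; then $v_i(O)\le\sup_{\sigma_i}\PP_{\sigma_{-i}^*,\sigma_i}(O)=M(O)\le v_i(W_i)+\ep$. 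For inner regularity, pick a strategy $\sigma_i^*$ of player $i$ with $m(W_i)\ge v_i(W_i)-\ep/2$, where $m(B):=\inf_{\sigma_{-i}\in\Sigma_{-i}}\PP_{\sigma_i^*,\sigma_{-i}}(B)$; the existence of such $\sigma_i^*$ is precisely the statement that the max-min value reaches the min-max value $v_i(W_i)$ up to $\ep/2$, i.e.\ the Blackwell determinacy of this zero-sum game (Martin \cite{Martin98}, in the form needed for countable action sets). Granting that the single-controller value $m$ is inner regular, choose a closed $C\subseteq W_i$ with $m(C)\ge m(W_i)-\ep/2$; then weak duality gives $v_i(C)\ge\sup_{\sigma_i}\inf_{\sigma_{-i}}\PP_\sigma(C)\ge\inf_{\sigma_{-i}}\PP_{\sigma_i^*,\sigma_{-i}}(C)=m(C)\ge v_i(W_i)-\ep$.

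It remains to establish the regularity of the one-controller values, and here passing to complements shows the two required facts are of a single type: since $m(B)=1-\sup_{\sigma_{-i}}\PP_{\sigma_i^*,\sigma_{-i}}(B^{c})$, inner regularity of the minimization value $m$ by closed sets is equivalent to outer regularity of a maximization value by open sets, which is also exactly what is needed for $M$. The easy halves (inner approximation of a $\sup$ of measures by compact sets, outer approximation of an $\inf$ of measures by open sets) are immediate, because two suprema, respectively two infima, commute and each individual measure $\PP_{\sigma_{-i},\sigma_i}$ is regular on the Polish space $[H]$. What I actually need, however, is the \emph{other} half: outer approximation of the maximization value $M(B)=\sup_{\sigma}\PP_{\sigma}(B)$ by open supersets, which amounts to interchanging $\sup_{\sigma}$ with $\inf_{O\supseteq B}$. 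This interchange is not automatic, and it is the content of the gambling-theoretic regularity results of Maitra, Purves, and Sudderth \cite{Maitra92}; I would establish it for our one-controller problems and then feed it into the two reductions above.

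The main obstacle is exactly this supremum--infimum interchange, and it is where the countable (rather than finite) action sets bite. In the finite-action setting of Martin \cite{Martin98} and Maitra, Purves, and Sudderth \cite{Maitra92} the one-shot action spaces are compact, and this compactness underwrites both the determinacy invoked above and the continuity of the controlled value that forces the interchange (typically via an optional-stopping/martingale argument along the controlled process). With merely countable action sets one loses this compactness, so I would recover it by a truncation and tightness argument: approximate each countable action set $A_j(h)$ from within by finite subsets, bound the resulting change in the induced measures so as to preserve the value up to $\ep$, apply the finite-action regularity, and pass to the limit. Coordinating this truncation simultaneously across the infinitely many histories---so that the total error stays below $\ep$ while the approximants remain closed and nested---together with securing the countable-action version of determinacy used for $\sigma_i^*$, is the crux; the remainder is the bookkeeping of the two reductions.
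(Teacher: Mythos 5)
Your outer-regularity reduction is sound in outline: fixing a near-optimal $\sigma_{-i}^*$ and appealing to outer regularity of the one-controller value $M(B)=\sup_{\sigma_i}\PP_{\sigma_{-i}^*,\sigma_i}(B)$ is a legitimate step, although the one-controller regularity you defer to is essentially as hard as the theorem itself and your truncation sketch for countable action sets is not carried out. The decisive problem is the inner-regularity half. There you need a single strategy $\sigma_i^*$ of player $i$ with $\inf_{\sigma_{-i}}\PP_{\sigma_i^*,\sigma_{-i}}(W_i)\geq v_i(W_i)-\ep/2$, i.e.\ you need the maxmin value $z_i(W_i)$ to come within $\ep/2$ of the minmax value $v_i(W_i)$. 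Under the hypotheses of the theorem this is false, for two independent reasons recorded in Section~\ref{secn.disc}. First, the action sets are only assumed countable, and determinacy of zero-sum Blackwell games fails already for clopen winning sets with countably many actions: in Wald's game $v_1(W_1)=1$ while $z_1(W_1)=0$. Second, with $n\geq 3$ players the coalition $-i$ must randomize independently across its members, so the minmax can strictly exceed the maxmin even with finite action sets (Maschler, Solan, and Zamir \cite{Maschler13}, Example 5.41). In either situation your method certifies only $v_i(C)\geq z_i(W_i)-\ep$, which can be vacuous even where the theorem is trivially true (in Wald's game $W_1$ is itself clopen, so $C=W_1$ works, but your argument cannot see this). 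The gap is therefore not the technical matter of ``securing the countable-action version of determinacy''; the determinacy you want is genuinely unavailable.

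The paper's proof is structured precisely to avoid ever producing such a $\sigma_i^*$. It stays on the minmax side throughout by introducing an auxiliary two-player \emph{perfect-information} game $G_i(f_i,c)$, in which Player I proposes one-shot payoff functions constrained stage by stage through the one-shot minmax operator $d_i$, and Player II produces the play. The only determinacy invoked is Borel determinacy of that perfect-information game (Martin \cite{Martin75}), which holds regardless of the countability of the action sets and the number of players. A winning strategy for Player I is converted into a closed set $C\subseteq W_i$ with $c\leq v_i(C)$ via a submartingale and Fatou argument (Proposition~\ref{prop:PlayerI}); a winning strategy for Player II is converted into an open set $O\supseteq W_i$ with $v_i(O)\leq c+\ep$ via a supermartingale argument (Proposition~\ref{prop:PlayerII}). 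At no point is player $i$ required to possess one strategy guaranteeing her minmax value against all opponent profiles. To repair your argument you would need to replace the maxmin step by a device of this kind.
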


One implication of Theorem~\ref{thrm:reg} concerns the complexity of strategies of player~$i$ that ensures that her probability of winning is close to her minmax value. 
Suppose, for example, that $v_i(W_i) = \frac{1}{2}$.
Then for every strategy profile $\sigma_{-i}$ of the opponents of player~$i$ and every $\ep > 0$, she has a response $\sigma_i$ such that $\PP_{\sigma_{-i},\sigma_i}(W_i) \geq \frac{1}{2}-\ep$. The strategy profile $\sigma_{-i}$ and the winning set $W_i$ may be  complex, and accordingly the good response $\sigma_i$ may be complex as well.
However, take now a closed subset $C \subseteq W_i$ such that $v_i(C) > v_i(W_i) - \ep = \frac{1}{2}-\ep$. The complement of $C$, denoted $C^c$, is open, hence it is the union of basic open sets; that is, it can be presented as a union $C^c = \bigcup_{h \in H'} O(h)$,
for some subset $H' \subseteq H$ of histories. A strategy $\sigma'_i$ that satisfies  
$\PP_{\sigma_{-i},\sigma_{i}'}(C_i) \geq \frac{1}{2}-\ep$ must aim at avoiding $C^c$, that is, at avoiding histories in $H'$. In that sense, $\sigma'$ may have a simple structure.

\begin{exl}\label{exl.io}\rm 
Here we consider a Blackwell game where the same stage game is being played at every stage. The stage game specifies a stage winning set for each player. A player's objective in the Blackwell game is to win the stage game infinitely often.

Thus let $\Gamma = (I,(A_{i},1_{W_{i}})_{i \in I})$ be a Blackwell game with history-independent countable action sets, where player $i$'s winning set is
\[W_i = \{(a_0,a_1,\ldots) \in A^\N:a_t \in U_i\text{ for infinitely many }t \in \N\};\]
here $U_{i}$, called player $i$'s \textit{stage winning set}, is a given subset of $\prod_{i \in I}A_{i}$. If $a_t \in U_i$, we say that player $i$ \textit{wins stage $t$}. Thus, player~$i$'s objective is to win infinitely many stages of the Blackwell game. The set $W_{i}$ is a $G_\delta$-set, i.e., an intersection of countably many open subsets of $A^\N$. 

Fix a player $i \in I$. 
Let 
\begin{equation}\label{eqn.stageminmax}
d_{i} := \inf_{x_{-i} \in X_{-i}} \sup_{x_i \in X_i} \PP_{x_{-i},x_i}(U_i) 
\end{equation} 
be player~$i$'s minmax value in the stage game. As follows from the arguments below, $v_{i}(W_{i})$ is either $0$ or $1$, and it is $1$ exactly when $d_{i} > 0$. In either case, there  are  intuitive approximations of player $i$'s wining sets by a closed set from below and an open set from above.   

First assume that $d_{i} > 0$. Take an $\ep > 0$. Let us imagine that player $i$'s objective is not merely to win infinitely many stages in the course of the Blackwell game, but to make sure that she wins at least once in every block of stages $t_{n},\ldots,t_{n+1}-1$, where the sequence of stages $t_0 < t_1 < \cdots$ is chosen to satisfy 
\[(1 - \tfrac{1}{2}d_{i})^{t_{n+1} - t_{n}} < 2^{-n-1}\cdot\ep\]
for each $n \in \N$. This, more demanding condition, defines an approximating set. Formally, define 
\[C_{i} := \bigcap_{n \in \N}\bigcup_{t_{n} \leq k < t_{n+1}} \{(a_0,a_1,\ldots) \in A^{\N}: a_k \in U_i\}.\]

As the intersection of closed sets, $C_{i}$ is a closed subset of $W_{i}$. Moreover, $1 - \ep \leq v_{i}(C_{i})$. To see this, fix any strategy $\sigma_{-i}$ for $i$'s opponents. At any history $h$, player $i$ has a mixed action $\sigma_{i}(h)$ that, when played against $\sigma_{-i}(h)$, guarantees a win at history $h$ with probability of at least $\tfrac{1}{2}d_{i}$. Thus, under the measure $\PP_{\sigma_{-i},\sigma_{i}}$ the probability for player $i$ not to win at least once in a block of stages $t_{n},\ldots,t_{n+1}-1$ is at most $2^{-n-1}\cdot\ep$, for any history of play up to stage $t_{n}$. And hence the probability that there is a block within which player $i$ does not win once is at most $\ep$.       

Suppose that $d_{i} = 0$. Let us imagine that player $i$'s objective is merely to win the stage game at least once. This modest objective defines the approximating set:  
\[O_{i} = \bigcup_{t \in \N}\{(a_{0},a_{1},\ldots)\in A^\N: a_{t} \in U_i\}.\]
As the union of open sets,
$O_{i}$ is an open set containing $W_{i}$. Moreover, $v_{i}(O_{i}) \leq \ep$. To see this, let $\sigma_{-i}$ be the strategy for $i$'s opponents such that, at any stage $t \in \N$ and any history $h \in A^{t}$ of stage $t$, the probability that the action profile $a_{t}$ is an element of $U_{i}$ is not greater than $2^{-t-1}\cdot\ep$ regardless of the action of player $i$. Then, for any player $i$'s strategy $\sigma_{i}$, the probability that $i$ wins at least once is not greater than $\ep$. $\Box$

\end{exl}

We turn to two related approximation results for Blackwell games with Borel payoff functions. A function $f : [H] \to \mathbb{R}$ is said to be a \textit{limsup function} if there exists a function $u : H \to \mathbb{R}$ such that for each play $(a_0,a_1,\ldots) \in [H]$, 
\[f(a_0,a_1,\ldots) = \limsup_{t \to \infty} u(a_0,\dots,a_t).\]
The function $f : [H] \to \mathbb{R}$ is a \textit{liminf function} if $-f$ is a limsup function. 

Limsup and liminf payoff functions are ubiquitous in the literature on infinite dynamic games. At least since the work of Gillette \cite{Gillette57}, the so-called limiting average payoff (that is, the limit superior or the limit inferior of the average of the stage payoffs) is a standard specification of the payoffs in a stochastic game (see for example Mertens and Neyman \cite{Mertens81}, or Levy and Solan \cite{Levy20}). Stochastic games with limsup payoff functions have been studied in Maitra and Sudderth \cite{Maitra93}.  

Limsup functions have relatively ``low" set-theoretic complexity. Various characterizations of the limsup functions can be found in Hausdorff \cite{Hausdorff05}. In particular, $f$ is a limsup function if and only if, for each $r \in \R$, the set $\{p \in [H]: r \leq f(p)\}$ is a $G_{\delta}$-set.    

We now state a result on the approximation of the minmax value for Blackwell games where a player's objective is represented by a bounded Borel-measurable payoff function.

\begin{theorem}\label{thrm:regfunc} 
Consider a Blackwell game. Suppose that player $i$'s payoff function $f_i : [H] \to \R$ is bounded and Borel-measurable. Then:
\[\begin{aligned}
v_i(f_i) &= \sup\{v_i(g): g\text{ is a bounded limsup function and }g \leq f_i\}\\
&= \inf\{v_i(g): g \text{ is a bounded limsup function and }f_i \leq g\}.
\end{aligned}\]
\end{theorem}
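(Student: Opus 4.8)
The plan is to deduce this theorem from Theorem 3.1 (regularity of the minmax value for Borel winning sets) by reducing the statement about bounded Borel payoff functions to a statement about winning sets, using a layer-cake decomposition of $f_i$ together with the set-theoretic characterization of limsup functions as exactly those functions whose superlevel sets are $G_\delta$.

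First I would treat the two equalities separately and, by replacing $f_i$ with $-f_i$ (noting that $g$ is a bounded liminf function iff $-g$ is a bounded limsup function, and that $v_i(-f_i)$ relates to the maxmin rather than the minmax), I would check whether a single symmetry argument reduces one equality to the other; if the minmax does not transform cleanly under negation, I would instead prove both directions by the same method. For the approximation from below, I would fix $\ep>0$ and seek a bounded limsup function $g\le f_i$ with $v_i(g)\ge v_i(f_i)-\ep$. The key device is to approximate each superlevel set $\{f_i\ge r\}$ from within by a closed set and to reassemble these into a single limsup function. Concretely, since $f_i$ is bounded, I would fix a finite grid $r_0<r_1<\cdots<r_m$ of mesh below $\ep$ covering the range of $f_i$, and apply Theorem 3.1 to each Borel winning set $W_k:=\{p:f_i(p)\ge r_k\}$ to obtain closed sets $C_k\subseteq W_k$ with $v_i(C_k)$ close to $v_i(W_k)$. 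Because a closed set is in particular $G_\delta$, its indicator is a limsup function, and I would build $g$ as a suitable weighted combination $g:=\sum_k (r_k-r_{k-1})\,1_{C_k}$ (or a monotone staircase built from the $C_k$), arranging $C_m\subseteq\cdots\subseteq C_0$ so that $g$ is a genuine limsup function dominated by $f_i$. The finite sum of limsup functions with $G_\delta$ superlevel structure remains a limsup function, so $g$ qualifies; verifying $g\le f_i$ is immediate from $C_k\subseteq\{f_i\ge r_k\}$, and bounding $v_i(f_i)-v_i(g)$ reduces to the per-level estimates from Theorem 3.1 plus the mesh $\ep$. For the approximation from above I would mirror the construction with open supersets $O_k\supseteq W_k$, using that an open set is $F_\sigma$ hence its indicator is a liminf function but its complement's indicator is limsup, and I would package the staircase so that the resulting dominating $g\ge f_i$ is again a limsup function.

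The main obstacle I anticipate is twofold. First, one must be careful that the minmax value $v_i$ of the assembled function $g$ is actually controlled by the minmax values $v_i(C_k)$ of the individual level sets: $v_i$ is neither additive nor linear in the payoff function, so the inequality $v_i\bigl(\sum_k c_k 1_{C_k}\bigr)\ge$ (something close to) $\sum_k c_k\,v_i(C_k)$ cannot be taken for granted and must be argued directly. The natural route is to have player $i$'s opponents fix a near-optimal $\sigma_{-i}$ for the full function $g$, and then, against this fixed profile, have player $i$ respond so as to do well simultaneously on the relevant level sets; this is where the monotone nesting $C_m\subseteq\cdots\subseteq C_0$ and the structure of the staircase are essential, since they let a single response of player $i$ be evaluated across all levels at once via the layer-cake identity $\E_{\sigma}[g]=\sum_k (r_k-r_{k-1})\,\PP_{\sigma}(C_k)$. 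Second, I must confirm that the finite combination actually is a limsup function: the characterization via $G_\delta$ superlevel sets guarantees that each $1_{C_k}$ is limsup, and I would invoke closure of the class of bounded limsup functions under finite positive linear combinations (and under adding constants) to conclude, being careful that this closure property holds — if it is not available as a black box I would instead construct the defining node function $u:H\to\R$ for $g$ explicitly from those of the summands.

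I expect the per-level estimates and the layer-cake bookkeeping to be routine once the nesting is set up; the genuine work is the verification that $v_i(g)$ inherits the right lower bound from the $v_i(C_k)$, so that is where I would spend most of the proof.
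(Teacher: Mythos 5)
Your plan—derive Theorem~\ref{thrm:regfunc} from Theorem~\ref{thrm:reg} by a layer-cake decomposition of $f_i$ into superlevel sets $W_k=\{f_i\geq r_k\}$, approximating each $W_k$ by a closed $C_k$ and assembling $g=r_0+\sum_k(r_k-r_{k-1})1_{C_k}$—founders on exactly the point you flag as ``the genuine work,'' and the fix you sketch does not close it. To get $v_i(g)\geq v_i(f_i)-O(\ep)$ you need, for \emph{every} $\sigma_{-i}$, a \emph{single} response $\sigma_i$ with $\sum_k(r_k-r_{k-1})\PP_{\sigma_{-i},\sigma_i}(C_k)\geq v_i(f_i)-r_0-O(\ep)$. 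Theorem~\ref{thrm:reg} only gives the level-wise bounds $\inf_{\sigma_{-i}}\sup_{\sigma_i}\PP(C_k)\geq v_i(W_k)-\delta$; against a fixed $\sigma_{-i}$ the near-optimal responses for the different $C_k$ need not coincide, and a response that makes $\E[f_i]$ large says nothing about $\PP(W_k\setminus C_k)$, since the $C_k$ must be chosen before $\sigma_{-i}$ is revealed. Worse, even the idealized chain of inequalities fails at both links: neither $v_i\bigl(\sum_k c_k 1_{C_k}\bigr)\geq\sum_k c_k v_i(C_k)$ nor $\sum_k c_k v_i(W_k)\geq v_i\bigl(\sum_k c_k 1_{W_k}\bigr)$ holds in general, because the minmax value of a weighted sum of indicators depends on the joint structure of the sets and not only on their individual minmax values. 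The nesting $C_m\subseteq\cdots\subseteq C_1$ and the layer-cake identity do not repair this. (Your secondary worry is the harmless one: a staircase over nested closed, or open, sets does have $G_\delta$ superlevel sets and hence is a limsup function by Hausdorff's characterization.)

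The paper does not reduce Theorem~\ref{thrm:regfunc} to Theorem~\ref{thrm:reg}; it proves both simultaneously from the auxiliary perfect-information game $G_i(f_i,c)$ run directly on the \emph{function} $f_i$. There, Player I's winning strategy certifies a single submartingale of proposed continuation values whose pointwise limsup is the approximating function $g\leq f_i$, and the constraint $d_i(r_{t+1})\geq r_t(a_t)$ couples all ``levels'' at once—precisely the compatibility across levels that a level-by-level application of Theorem~\ref{thrm:reg} cannot supply. Theorem~\ref{thrm:reg} is then the special case $f_i=1_{W_i}$ (the limsup function produced is supported on a closed set $C\subseteq W_i$). So the logical order is the reverse of the one you propose, and to make your reduction work you would in effect have to rebuild the game-theoretic argument that proves the function version directly.
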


Theorems~\ref{thrm:reg} and~\ref{thrm:regfunc} have been proven by Martin \cite{Martin98}
for the case $n=2$, see \cite[Theorem 5, and Remark (b)]{Martin98} and 
\cite[Remark (c)]{Martin98}. They have been extended to two-player stochastic games by 
Maitra and Sudderth \cite{Maitra98}. Theorems~\ref{thrm:reg} and~\ref{thrm:regfunc}
extend the known results in two respects. First, they allow for more than two players,
and second, they allow for countably many actions.

The proof of Theorem \ref{thrm:reg} combines and fine-tunes the arguments in Martin \cite{Martin98} and Maitra and Sudderth \cite{Maitra98}. The key element of the proof is a zero-sum perfect information game, 
denoted $G_i(f_i,c)$, where the aim of Player I is to ``prove" that the minmax value of $f_i$ is at least $c$. Roughly speaking, the game proceeds as follows. 
Player~I commences the game by proposing a fictitious continuation payoff, which one could think of as a payoff player $i$ hopes to attain, contingent on each possible stage $0$ action profile. The number $c$ serves as the initial threshold: player $i$'s minmax value of the proposed continuation payoffs is required to be at least $c$. Player II then chooses a stage $0$ action profile, and the corresponding continuation payoff serves as the new threshold. 
Player I then proposes a fictitious continuation payoff contingent on each possible stage $1$ action profile, 
and Player II chooses the stage $1$ action profile, etc. 
Player I wins if the sequence of continuation payoffs is ``justified" by the actual payoff on a play produced by Player II. Ultimately the proof rests on the determinacy of the game $G_i(f_i,c)$, which follows by Martin \cite{Martin75}.

The perfect information game $G_i(f_i,c)$ is a version of the games used in Martin \cite{Martin98}. The main difference is in the use of player $i$'s minmax value that constrains Player I's choice of fictitious continuation payoffs. The details of our proof are slightly closer to those in Maitra and Sudderth \cite{Maitra98}. Like them we invoke martingale convergence and the Fatou lemma.  



Finally, we state an approximation result for a Blackwell game with history-independent minmax values. Recall that a function $g : A^\N \to \R$ is \emph{upper semicontinuous} if, for each $r \in \R$, the set $\{p \in [H]: r \leq f(p)\}$ is a closed set, and $g$ is \emph{lower semicontinuous} if $-g$ is upper semicontinuous. When $g = 1_B$ for some $B \subseteq A^\N$, $g$ is upper semicontinuous (resp.~lower semicontinuous) if and only if $B$ is closed (resp.~open).

\begin{theorem}\label{thrm:tailapprox}
Consider a Blackwell game. Suppose that player $i$'s payoff function $f_i$ is bounded and Borel-measurable, and player $i$'s minmax values are history-independent. Then
\[\begin{aligned}
v_i(f_i) 
&= \sup\{v_i(g): \text{g is a bounded upper semicontinuous function and }g \leq f_i\}\\
&= \inf\{v_i(g): \text{g is a bounded lower semicontinuous function and }f_i \leq g\}.
\end{aligned}\]
\end{theorem}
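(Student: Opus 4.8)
The two displayed equalities each contain a trivial inequality and a substantive one. Since $g \le f_i$ implies $v_i(g) \le v_i(f_i)$ and $g \ge f_i$ implies $v_i(g) \ge v_i(f_i)$, the supremum is at most $v_i(f_i)$ and the infimum is at least $v_i(f_i)$. The content is the reverse: for every $\ep > 0$ I must find a bounded upper semicontinuous $g \le f_i$ with $v_i(g) \ge v_i(f_i) - \ep$, and a bounded lower semicontinuous $g \ge f_i$ with $v_i(g) \le v_i(f_i) + \ep$. Theorem~\ref{thrm:regfunc} already yields approximants of the required value within the larger class of limsup functions (note that both upper and lower semicontinuous functions are limsup functions, since their super-level sets $\{g \ge r\}$ are respectively closed and $\bigcap_k \{g > r - 1/k\}$, hence $G_\delta$). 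The task is therefore to upgrade a limsup approximant to a semicontinuous one, and this is exactly where history-independence of the minmax value will be used.

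Normalizing $f_i$ to take values in $[0,1]$ (an affine change of variables commutes with $v_i$ and preserves Borel-measurability, history-independence, and semicontinuity), I would first use Theorem~\ref{thrm:regfunc} to pick a bounded limsup function $g_0 \le f_i$ with $v_i(g_0) \ge v_i(f_i) - \ep/2$, written as $g_0(a_0,a_1,\dots) = \limsup_t u(a_0,\dots,a_t)$ with $0 \le u \le 1$. The core step is a conversion lemma: from $g_0$, build an upper semicontinuous $g \le g_0\ (\le f_i)$ with $v_i(g) \ge v_i(g_0) - \ep/2$. The construction generalizes the closed approximation of Example~\ref{exl.io}: choosing an increasing sequence of stages $t_0 < t_1 < \cdots$, set
\[
g(p) \ :=\ \inf_{n \in \N}\ \max_{t_n \le t < t_{n+1}} u(a_0,\dots,a_t).
\]
Each inner maximum depends on finitely many coordinates and is continuous, so $g$, an infimum of continuous functions, is upper semicontinuous; and $\inf_n \max_{\text{block }n} u_t \le \liminf_n \max_{\text{block }n} u_t \le \limsup_t u_t = g_0 \le f_i$. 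This is precisely the ``succeed in every block'' strengthening of Example~\ref{exl.io}, now applied to the level structure of $u$ rather than to a single stage winning set; for the general (non-indicator) payoff I would discretize the range of $u$ on a grid of mesh $\ep$ and run the block argument at each grid level simultaneously.

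The hard part is the value estimate $v_i(g) \ge v_i(g_0) - \ep/2$, and this is where history-independence is decisive. Fixing any opponents' profile $\sigma_{-i}$, I must exhibit a response $\sigma_i$ for which $\E_{\sigma_{-i},\sigma_i}[g]$ is close to $v_i(g_0)$. Since $g$ demands a high value of $u$ in \emph{every} block, including the early ones, a single failed block is fatal, exactly as in Example~\ref{exl.io}; there the remedy was that the history-independent stage value $d_i$ let player~$i$ secure a success in each block with a failure probability summable over blocks once the blocks are long enough. The analogous per-block guarantee must now be uniform across histories, and this is supplied by history-independence: because player~$i$'s value equals a constant $c$ in every subgame, the same threshold $c$ constrains player~I's moves at every node of the game $G_i(f_i,c)$ underlying Theorems~\ref{thrm:reg}--\ref{thrm:regfunc}, so re-running that construction with this uniform constraint should yield an approximant whose value is close to $c$ not merely at the root but in every subgame. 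I would then run the resulting value process across the blocks and apply an optional-stopping/Fatou argument as in the proof of Theorem~\ref{thrm:regfunc}, choosing the block lengths $t_{n+1} - t_n$ so that the per-block failure probabilities sum to at most $\ep/2$. I expect this value estimate---combining the summable-failure block scheme with the uniform subgame guarantee furnished by history-independence, while tracking the discretization of the range of $u$---to be the main obstacle; the verification that $g$ is upper semicontinuous and dominated by $f_i$ is routine.

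The lower-semicontinuous half is handled by the dual construction. Here I would take, via Theorem~\ref{thrm:regfunc}, a limsup function $g_0 \ge f_i$ with $v_i(g_0) \le v_i(f_i) + \ep/2$, and use outer regularity (Theorem~\ref{thrm:reg}) to replace closed level sets by open supersets: choosing open sets $O_r \supseteq \{f_i \ge r\}$, nested in $r$, the function $g(p) := \sup\{r : p \in O_r\}$ satisfies $g \ge f_i$ and is lower semicontinuous because $\{g > s\} = \bigcup_{r > s} O_r$ is open. The value estimate $v_i(g) \le v_i(f_i) + \ep$ is the mirror image of the one above: using that player~$i$'s value is the same constant $c$ in every subgame, the opponents can keep the running supremum low in expectation by a block scheme with summable error, dualizing the $O_i$ approximation of Example~\ref{exl.io}. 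The same obstacle---preserving the minmax value through the block assembly by means of the uniform, subgame-independent guarantee that history-independence provides---reappears here and is resolved in the same way.
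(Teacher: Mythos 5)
There is a genuine gap. Your plan routes through an intermediate limsup approximant $g_0$ (via Theorem~\ref{thrm:regfunc}) and then tries to convert $g_0$ into a semicontinuous function by a block construction, but the conversion lemma you need --- from a bounded limsup $g_0$ produce an upper semicontinuous $g \leq g_0$ with $v_i(g) \geq v_i(g_0) - \ep/2$ --- is \emph{false} for a general limsup function: the paper's own counterexample in Section~\ref{secn.disc} (Sorin's game) exhibits a limsup payoff with minmax value $2/3$ that admits no upper semicontinuous minorant of value above $1/2$. What rescues the statement is history-independence of the minmax values of $f_i$, and nothing in Theorem~\ref{thrm:regfunc} guarantees that the approximant $g_0$ inherits this property; your sketch gestures at using the constant subgame value $c$ of $f_i$ ``at every node of $G_i(f_i,c)$'', but you never establish that the resulting limsup function has subgame values close to $c$, nor how the per-block guarantee would follow for a general (non-indicator, level-discretized) $u$. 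You yourself flag the value estimate as ``the main obstacle''; since that estimate is the entire content of the theorem beyond the trivial inequalities, the proposal is a plan rather than a proof, and its central step is both unproven and, in the form stated, not true without further input.

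The paper's proof bypasses all of this and is worth contrasting with your approach. History-independence is exploited \emph{before} any approximation takes place: Proposition~\ref{prop:v(Q)=1} (subgame $\ep/2$-optimal responses plus L\'{e}vy's zero-one law) shows that $v_i(Q_{i,\ep}(f_i)) = 1$ and $v_i(U_{i,\ep}(f_i)) = 0$, where $Q_{i,\ep}(f_i) = \{f_i \geq v_i(f_i) - \ep\}$ and $U_{i,\ep}(f_i) = \{f_i \geq v_i(f_i)+\ep\}$. One then applies the set-level regularity of Theorem~\ref{thrm:reg} to obtain a closed $C \subseteq Q_{i,\ep}(f_i)$ with $v_i(C) \geq 1 - \ep$ and an open $O \supseteq U_{i,\ep}(f_i)$ with $v_i(O) \leq \ep$, and takes the two-valued functions $g = (v_i(f_i)-\ep)\cdot 1_C$ and $g = v_i(f_i) + \ep + (1 - v_i(f_i) - \ep)\cdot 1_O$. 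No limsup intermediary, no block scheme, and no layer-by-layer assembly of level sets (which, as you note for your lower semicontinuous half, would anyway founder on the fact that the minmax of a layered function is not controlled by the minmaxes of its layers). If you want to repair your argument, the lesson is to collapse $f_i$ to a single level set using history-independence first, and only then invoke regularity.
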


As the proof reveals, both the upper semicontinuous and the lower semicontinuous functions can be chosen to be two-valued. Recall that (Hausdorff \cite{Hausdorff05}) an upper semicontinuous function and a lower semicontinuous function are both a limsup and a liminf function. Consequently, in comparison to Theorem \ref{thrm:regfunc}, an additional assumption of history-independence of the minmax values in Theorem \ref{thrm:tailapprox} leads to a stronger approximation result. The latter condition cannot be dropped; see Section~\ref{secn.disc} for an example of a game with a limsup payoff function such that the minmax value cannot be approximated from below by an upper semicontinuous function.

\section{Existence of equilibria}\label{secn.appl}
In this section, we employ the results of the previous section to establish existence of $\ep$-equilibria in two distinct classes of Blackwell games. Theorem~\ref{theorem:sumofprob} concerns $n$-player Blackwell games where each player has a finite action space at each history, her objective is represented by an analytic winning set, and the sum of the minmax values over the players exceeds $n-1$. Theorem~\ref{theorem:minmax_indt} concerns for Blackwell games with bounded upper semi-analytic payoff functions, history-independent finite action spaces, and history-independent minmax values.

Consider a Blackwell game $\Gamma$ and let $\ep \geq 0$. A strategy profile $\sigma \in \Sigma$ is \emph{an $\ep$-equilibrium} of $\Gamma$ if for each player $i \in I$ and each strategy $\eta_i \in \Sigma_i$ of player $i$, 
\[\mathbb{E}_{\sigma_{-i},\eta_{i}}(f_i) \leq \mathbb{E}_{\sigma_{-i},\sigma_{i}}(f_i) + \ep.\]

We state our first existence result.

\begin{theorem}\label{theorem:sumofprob}
Consider an $n$-player Blackwell game $\Gamma = (I, A, H, (1_{W_{i}})_{i \in I})$. Suppose that for each player $i \in I$  player $i$'s action set $A_i(h)$ at each history $h \in H$ is finite, and that her winning set $W_i$ is analytic. If $v_1(W_1)+ \cdots +v_n(W_n) > n-1$, then the set $W_1 \cap \cdots \cap W_n$ is not empty. Consequently, $\Gamma$ has a 0-equilibrium. 
\end{theorem}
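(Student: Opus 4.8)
The plan is to reduce the problem to the case of \emph{closed} winning sets and then exploit determinacy of the associated two-player zero-sum games together with a union bound. Since each $A_i(h)$ is finite, the tree $H$ is finitely branching, so the play space $[H]$ is compact. I would first pass from the analytic sets $W_i$ to closed subsets $C_i \subseteq W_i$ whose minmax values are almost as large. Put $\alpha := v_1(W_1) + \cdots + v_n(W_n) - (n-1) > 0$. For each $i$ I want a closed set $C_i \subseteq W_i$ with $v_i(C_i) > v_i(W_i) - \tfrac{\alpha}{2n}$, so that $\sum_i v_i(C_i) > (n-1) + \tfrac{\alpha}{2} > n-1$. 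The existence of such $C_i$ is exactly inner regularity of the minmax value. Theorem~\ref{thrm:reg} delivers this for \emph{Borel} $W_i$; here $W_i$ is only analytic, so I would extend inner regularity to analytic sets by showing that $W \mapsto v_i(W)$ is a Choquet capacity on the compact metric space $[H]$ and invoking capacitability of analytic sets, noting that on the compact space $[H]$ ``compact'' and ``closed'' coincide for subsets.

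For the closed case, fix the sets $C_i$ and choose $\ep > 0$ small enough that $\sum_i v_i(C_i) - n\ep > n-1$. Viewing player $i$ against the coalition $-i$ as a two-player zero-sum Blackwell game with the closed (hence Borel) objective $C_i$ and finite action sets on both sides, Martin's determinacy theorem yields a value equal to $v_i(C_i)$; consequently player $i$ has a strategy $\sigma_i^*$ with $\inf_{\sigma_{-i}} \PP_{\sigma_i^*,\sigma_{-i}}(C_i) \geq v_i(C_i) - \ep$. Now play the profile $\sigma^* = (\sigma_1^*,\ldots,\sigma_n^*)$. For each $i$ the realized coalition strategy $\sigma^*_{-i}$ is one admissible choice, so $\PP_{\sigma^*}(C_i) \geq v_i(C_i) - \ep$. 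Summing gives $\sum_i \PP_{\sigma^*}(C_i) > n-1$, equivalently $\sum_i \PP_{\sigma^*}(C_i^c) < 1$. Hence $\PP_{\sigma^*}\big(\bigcup_i C_i^c\big) \leq \sum_i \PP_{\sigma^*}(C_i^c) < 1$, so $\bigcup_i C_i^c \neq [H]$, and therefore there is a play $p$ with $p \in \bigcap_i C_i \subseteq \bigcap_i W_i$. This shows $W_1 \cap \cdots \cap W_n \neq \emptyset$.

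For the equilibrium claim, fix a play $p^* = (a_0^*,a_1^*,\ldots) \in \bigcap_i W_i$ and let every player follow the pure actions along $p^*$ (with arbitrary continuation after any deviation). The induced play is $p^*$, so each player $i$ receives $1_{W_i}(p^*) = 1$, the maximal possible value of her payoff function. Since no unilateral deviation $\eta_i$ can yield $\E_{\sigma^*_{-i},\eta_i}(1_{W_i}) > 1$, the profile is a $0$-equilibrium, and it has a pure path of play as claimed.

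The delicate part is the first step: the passage from Borel to analytic winning sets. Establishing that $v_i$ is a capacity requires continuity from below along increasing sequences of arbitrary sets together with continuity from above along decreasing sequences of compact sets; these monotone-continuity properties of the minmax value, rather than the clean union-bound argument of the second step, are where I expect the real work to lie. Compactness of $[H]$, which follows from finiteness of the action sets, is precisely what makes capacitability applicable and what lets the inner approximation be taken by closed sets.
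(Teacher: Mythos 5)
Your argument breaks at two separate points, and both failures are documented in the paper itself. First, the reduction in the closed case via determinacy does not work for $n\geq 3$. The quantity $v_i(C_i)=\inf_{\sigma_{-i}}\sup_{\sigma_i}\PP_{\sigma_{-i},\sigma_i}(C_i)$ is an infimum over \emph{independent} strategy profiles of the opponents. Merging the coalition $-i$ into a single player, as you must to invoke Martin's two-player determinacy theorem, enlarges the infimum to correlated coalition strategies; the resulting value can be strictly below $v_i(C_i)$, and likewise the maxmin $z_i(C_i)=\sup_{\sigma_i}\inf_{\sigma_{-i}}\PP_{\sigma_{-i},\sigma_i}(C_i)$ can be strictly below $v_i(C_i)$ (the paper's concluding remarks cite Maschler--Solan--Zamir, Example 5.41: a three-player game with binary actions and a payoff depending only on the stage-$0$ profile). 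So you cannot produce a single strategy $\sigma_i^*$ with $\inf_{\sigma_{-i}}\PP_{\sigma_i^*,\sigma_{-i}}(C_i)\geq v_i(C_i)-\ep$, and the union bound collapses. The paper circumvents this by truncating each $C_i$ to the clopen set $C_i^m$ of plays whose stage-$m$ history is still compatible with $C_i$, taking a Nash equilibrium $\sigma^m$ of the resulting finite game (Nash's theorem); under $\sigma^m$ each player's payoff is at least her minmax value because $\sigma^m_{-i}$ is one admissible independent profile, so $\sum_i\PP_{\sigma^m}(C_i^m)>n-1$ and Lemma~\ref{lemma:intersect} gives $\bigcap_i C_i^m\neq\emptyset$; compactness of $[H]$ and nestedness of the sets $\bigcap_i C_i^m$ then yield $\bigcap_i C_i\neq\emptyset$. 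Your union bound is the same as Lemma~\ref{lemma:intersect}, but it must be applied to an equilibrium of a finite truncation, not to a profile of maxmin strategies.

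Second, the passage from analytic to closed sets via Choquet capacitability cannot succeed, because $v_i$ is not a capacity: continuity from below along increasing sequences of sets fails even with finite action sets. The paper's concluding remarks exhibit a two-player game in which player $1$ is a dummy and $A_2$ has two elements, with $C_m=A^{\N}\setminus\{p_m,p_{m+1},\ldots\}$ for a convergent sequence of distinct plays, so that $v_1(C_m)=0$ for every $m$ while $v_1\bigl(\bigcup_m C_m\bigr)=v_1(A^{\N})=1$. This is exactly the axiom your capacitability argument needs, so the step you flagged as ``where the real work lies'' is in fact impassable by this route. The paper handles analytic winning sets in the opposite direction: assuming $\bigcap_i W_i=\emptyset$, Novikov's separation theorem produces Borel sets $B_i\supseteq W_i$ with $\bigcap_i B_i=\emptyset$ and $v_i(B_i)\geq v_i(W_i)$, contradicting the Borel case; inner regularity (Theorem~\ref{thrm:reg}) is invoked only for Borel sets. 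Your final step, deducing the $0$-equilibrium from a play in $\bigcap_i W_i$, is correct and matches the paper.
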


Note that any play $p \in W_1 \cap \cdots \cap W_n$ is in fact a 0-equilibrium, or more precisely, any strategy profile that requires all the players to follow $p$ is a 0-equilibrium, because it yields all players the maximal payoff 1.

The key step of the proof is the approximation of the minmax value of a player using a closed subset of her winning set. 
To prove Theorem~\ref{theorem:sumofprob} we need the following technical observation.

\begin{lemma}\label{lemma:intersect}
Let $(X,\mathscr{B},P)$ be a probability space, and let $Q_1,\ldots,Q_n\in \mathscr{B}$ be $n$ events. Then
\[P(Q_1\cap\cdots\cap Q_n)\geq P(Q_1)+\cdots+P(Q_n)-n+1.\]
\end{lemma}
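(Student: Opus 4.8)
The plan is to prove this by induction on $n$, reducing everything to the two-event case and the elementary inclusion–exclusion inequality. The base case $n=1$ is trivial: $P(Q_1) \geq P(Q_1) - 1 + 1$ holds with equality. For the inductive step, the cleanest route is to first establish the statement for $n=2$ and then bootstrap.

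First I would prove the two-event bound
\[
P(Q_1 \cap Q_2) \geq P(Q_1) + P(Q_2) - 1.
\]
This follows immediately from the inclusion–exclusion identity $P(Q_1 \cup Q_2) = P(Q_1) + P(Q_2) - P(Q_1 \cap Q_2)$ together with the fact that any probability is at most $1$, so $P(Q_1 \cup Q_2) \leq 1$. Rearranging gives the claim. Equivalently, one can argue via complements: $P((Q_1 \cap Q_2)^c) = P(Q_1^c \cup Q_2^c) \leq P(Q_1^c) + P(Q_2^c)$ by subadditivity, and taking complements of both sides yields the bound. The complement formulation is in fact the most transparent and generalizes directly.

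For the general case I would use the complement version throughout. Writing $R_i := Q_i^c$, De Morgan gives $(Q_1 \cap \cdots \cap Q_n)^c = R_1 \cup \cdots \cup R_n$, and finite subadditivity of $P$ yields
\[
P\bigl((Q_1 \cap \cdots \cap Q_n)^c\bigr) \leq \sum_{i=1}^n P(R_i) = \sum_{i=1}^n \bigl(1 - P(Q_i)\bigr) = n - \sum_{i=1}^n P(Q_i).
\]
Taking complements, $P(Q_1 \cap \cdots \cap Q_n) = 1 - P((Q_1 \cap \cdots \cap Q_n)^c) \geq 1 - n + \sum_{i=1}^n P(Q_i)$, which is exactly the asserted inequality. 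This one-line subadditivity argument bypasses the induction altogether and is what I would ultimately write down.

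There is essentially no obstacle here; the only thing to be careful about is invoking the correct direction of the standard inequalities (subadditivity gives an upper bound on the probability of a union, and we want a lower bound on the intersection, so passing to complements is what aligns the inequalities). The measurability hypothesis $Q_i \in \mathscr{B}$ is needed only to ensure all the sets involved are events so that $P$ is defined on them; the union and complements of finitely many events are again events, so no further justification is required.
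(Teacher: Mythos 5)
Your proof is correct. The argument you ultimately settle on — pass to complements via De Morgan, apply finite subadditivity to the union $Q_1^c\cup\cdots\cup Q_n^c$, and rearrange — is a genuinely different (and slightly more streamlined) route than the paper's. The paper proves the two-event case from the inclusion--exclusion identity $P(Q_1\cup Q_2)=P(Q_1)+P(Q_2)-P(Q_1\cap Q_2)$ together with $P(Q_1\cup Q_2)\leq 1$, and then runs an explicit induction on $n$, peeling off one event at a time. Your complement argument collapses the induction into a single application of the union bound; the only induction left is the one hidden inside finite subadditivity itself, which is standard enough to cite without proof. What the paper's version buys is that it never needs to mention complements and makes the base case $n=2$ (which it reuses as a displayed inequality in the inductive step) completely explicit; what yours buys is brevity and the transparency of seeing the inequality as nothing more than the union bound in disguise. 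Both are complete and correct, and your remark about which direction the inequalities point (subadditivity bounds unions from above, so complementation is what converts it into a lower bound on the intersection) is exactly the right thing to be careful about.
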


\begin{proof}
For $n=1$ the statement is obvious, and for $n=2$ we have
\begin{equation}\label{indineq}
P(Q_1\cap Q_2)=P(Q_1)+P(Q_2)-P(Q_1\cup Q_2)\geq P(Q_1)+P(Q_2)-1.
\end{equation}
Assume that the statement holds for some $n-1$. Then for $n$ we have
\begin{align*}
P(Q_1\cap\cdots\cap Q_n)\,&=\,P((Q_1\cap\cdots\cap Q_{n-1})\cap Q_n)\\
&\geq\, P(Q_1\cap\cdots\cap Q_{n-1}) + P(Q_n)-1\\
&\geq\, \big(P(Q_1)+\cdots+P(Q_{n-1})-n+2\big)+ P(Q_n)-1\\
&=\,P(Q_1)+\cdots+P(Q_n)-n+1,
\end{align*}
where the first inequality follows from Eq.~\eqref{indineq} and the second by the induction hypothesis.
\end{proof}

\noindent\textbf{Proof of Theorem \ref{theorem:sumofprob}:} We first establish the theorem in the special case of Borel winning sets, and then generalize it to analytic winning sets.\smallskip

\noindent\textsc{Part I:} Suppose that for each $i \in I$ the set $W_{i} \subseteq [H]$ is Borel.

By Theorem \ref{thrm:reg} there are closed sets $C_1\subseteq W_1,\ldots,C_n\subseteq W_n$ such that $v_1(C_1)+\cdots+v_n(C_n) > n-1$. We show that the intersection $C_1 \cap\cdots\cap C_n$ is not empty.

Given $m \in \N$ consider the $n$-player Blackwell game $\Gamma^{m} = (I, A, H, (1_{C_{i}^{m}})_{i \in I})$, where player $i$'s winning set is defined by 
\[C_i^{m} := \bigcup\{O(h): h \in H_{m}\text{ such that }O(h) \cap C_{i} \neq \oslash\}.\]

The game $\Gamma^{m}$ essentially ends after $m$ stages: by stage $m$ each player $i$ knows whether the play is an element of her winning set $C_{i}^{m}$ or not. In $\Gamma^{m}$, player $i$ wins if after $m$ stages there is a continuation play that leads to $C_i$. 
Note that this continuation play might be different for different players. 

The set $C_i^{m}$ is a clopen set. For each $m \in \mathbb{N}$ and $i \in I$ we have the inclusion $C_i^{m} \supseteq C_i^{m+1}$ (winning in $\Gamma^{m+1}$ is more difficult than winning in $\Gamma^{m}$). Moreover, $\bigcap_{m \in \mathbb{N}} C_i^{m} = C_i$. Indeed, the inclusion $C_i^{m} \supseteq C_i$ is evident from the definition. Conversely, take an element $q$ of the set $[H] \setminus C_i$. Since $[H] \setminus C_i$ is an open set, there exists a history $h \in H$ such that $q \in O(h)$ and $O(h) \subseteq [H] \setminus C_{i}$. But then $q \in [H] \setminus C_i^{m}$, where $m$ is the length of the history $h$. 

Define $C^m := C_1^m \cap\cdots\cap C_n^m$. Thus $\{C^{m}\}_{m \in \mathbb{N}}$ is a nested sequence of closed sets converging to $C_1 \cap\cdots\cap C_n$. Note that, since by the assumption of the theorem $H$ is a finitely branching tree, the space $[H]$ is compact. Thus $C^{m}$ is a compact set.  Consequently, to prove that $C_{1} \cap \cdots \cap C_{n}$ is not empty, we only need to argue that $C^{m}$ is not empty for each $m \in \mathbb{N}$.

The game $\Gamma^m$ being finite, it has a $0$-equilibrium (Nash \cite{Nash50}), say $\sigma^m$. By the definition of $0$-equilibrium,
the equilibrium payoff is not less than the minmax value:
\[\PP_{\sigma_{-i}^m,\sigma_{i}^{m}} (C_i^m) \,=\, \sup_{\sigma_{i} \in \Sigma_{i}} \PP_{\sigma_{-i}^m,\sigma_{i}} (C_i^m)\, \geq\, \inf_{\sigma_{-i} \in \Sigma_{-i}} \sup_{\sigma_{i} \in \Sigma_{i}} \PP_{\sigma_{-i},\sigma_{i}} (C_i^m) = v_i(C_i^m).\]
Moreover, since $C_i^{m} \supseteq C_{i}$, it holds that $v_i(C_i^m) \geq v_i(C_i)$. We conclude that
\[\PP_{\sigma^m}(C_1^m)+ \cdots +\PP_{\sigma^m}(C_n^m) > n-1.\]
Finally, we apply Lemma \ref{lemma:intersect} to conclude that $\PP_{\sigma^m}(C^m) > 0$, hence $C^{m}$ is not empty.\smallskip

\noindent\textsc{Part II:} Now let $\Gamma$ be any game as in the statement of the theorem. Suppose by way of contradiction that $W_1 \cap \cdots \cap W_n$ is empty. By Novikov's separation theorem  (Kechris \cite[Theorem 28.5]{Kechris95}) there exist Borel sets $B_1, \ldots, B_n$ such that $W_{i} \subseteq B_{i}$ for each $i \in I$ and $B_1 \cap \cdots \cap B_n = \oslash$. But since $v_{i}(W_{i})\leq v_{i}(B_{i})$ for each $i \in I$, the game $\Gamma = (I, A, H, (1_{B_{i}})_{i \in I})$ satisfies the assumptions of the theorem, and Part I of the proof yields a contradiction. $\Box$\medskip

We state our second and main existence result. 

\begin{theorem}\label{theorem:minmax_indt}
Consider a Blackwell game $\Gamma = (I, A, H, (f_{i})_{i \in I})$. Suppose that for each player $i \in I$, player $i$'s action set $A_i(h)$ at each history $h \in H$ is finite, her payoff function $f_i$ is bounded and upper semi-analytic, and her minmax value is history-independent. Then for every $\ep>0$ the game admits an $\ep$-equilibrium.
\end{theorem}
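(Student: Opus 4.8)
The plan is to reduce Theorem~\ref{theorem:minmax_indt} to the already-established Theorem~\ref{theorem:sumofprob} by passing to an auxiliary game of winning sets. Fix $\ep>0$ and write $v_i := v_i(f_i)$ for player $i$'s history-independent minmax value. Define player $i$'s set of $\tfrac{\ep}{2}$-individually rational plays
\[
W_i := \{p \in [H] : f_i(p) \geq v_i - \tfrac{\ep}{2}\}.
\]
Since $f_i$ is upper semi-analytic, each set $\{f_i \geq r\}$ is analytic, so $W_i$ is analytic, and the action sets are finite; thus the auxiliary game $\Gamma' = (I, A, H, (1_{W_i})_{i\in I})$ satisfies the hypotheses of Theorem~\ref{theorem:sumofprob}. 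The argument then splits into two parts: (i) show that in $\Gamma'$ every player's minmax value equals $1$, so that $\sum_i v_i(W_i) = n > n-1$ and Theorem~\ref{theorem:sumofprob} produces a play $p^\ast \in \bigcap_i W_i$; and (ii) turn $p^\ast$ into an $\ep$-equilibrium of $\Gamma$ by supporting it with minmax punishments.

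The crux is step (i): the claim that $v_i(W_i) = 1$, i.e.\ that for every $\sigma_{-i} \in \Sigma_{-i}$ and every $\delta>0$ player $i$ has a response $\sigma_i$ with $\PP_{\sigma_{-i},\sigma_i}(f_i \geq v_i - \tfrac{\ep}{2}) \geq 1-\delta$. This is where history-independence is indispensable, and I expect it to be the main obstacle. The difficulty is that a merely $\delta$-optimal response only controls the expectation $\E_{\sigma_{-i},\sigma_i}(f_i)$, and the martingale $M_t := \E_{\sigma_{-i},\sigma_i}(f_i \mid \mathcal{F}_t)$ — which converges a.s.\ to $f_i$ by martingale convergence, $f_i$ being bounded and universally measurable — may converge to a value below $v_i-\tfrac{\ep}{2}$ on a set of probability as large as $1-v_i$. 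To upgrade this expectation guarantee to a high-probability guarantee I would use a \emph{restart} argument: player $i$ plays a near-optimal strategy until the conditional expected continuation payoff first drops to $v_i-\tfrac{\ep}{2}$; at that history she resets to a fresh near-optimal strategy of the subgame, which by history-independence again secures a conditional expectation of at least $v_i-\delta$, and she repeats. Because each attempt secures $f_i \geq v_i-\tfrac{\ep}{2}$ with probability bounded away from zero while the residual (failure) mass is handed to a self-similar subgame, the failure probability decays geometrically across attempts and the realized payoff satisfies $f_i \geq v_i-\tfrac{\ep}{2}$ with probability tending to $1$. Making the stopping times, the self-similarity, and the geometric bound precise — so that plays with infinitely many resets are shown to be negligible — is the technical heart; the tools are exactly those invoked in Section~\ref{secn.approx}, namely martingale convergence and Fatou's lemma.

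Granting step (i), Theorem~\ref{theorem:sumofprob} applied to $\Gamma'$ gives a play $p^\ast = (a_0^\ast, a_1^\ast, \ldots) \in W_1 \cap \cdots \cap W_n$, so $f_i(p^\ast) \geq v_i - \tfrac{\ep}{2}$ for every $i$. I would then define $\sigma^\ast$ as the profile that follows $p^\ast$ on the equilibrium path and punishes deviations as follows. Since actions are observed and all players follow the deterministic path $p^\ast$, a unilateral deviation by a player $j$ at the first deviation stage produces an action profile differing from the prescribed one in exactly the coordinate of $j$; hence the deviator is identified unambiguously. From the next stage on, the players in $-j$ switch to a strategy that holds $j$ to at most $v_j + \tfrac{\ep}{2}$ in the subgame reached; such a strategy exists because, by history-independence, $j$'s minmax value in that subgame is again $v_j$. (Profiles differing in two or more coordinates cannot arise under a unilateral deviation and may be handled arbitrarily.)

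It remains to verify the $\ep$-bookkeeping. Along $\sigma^\ast$ the play is $p^\ast$ with probability one, so $\E_{\sigma^\ast}(f_j) = f_j(p^\ast) \geq v_j - \tfrac{\ep}{2}$. For any deviation $\eta_j$, partitioning by the first deviation stage, each realized history either never leaves $p^\ast$ (contributing payoff $f_j(p^\ast)$) or deviates at some stage and is thereafter punished (contributing conditional expected payoff at most $v_j + \tfrac{\ep}{2}$); hence $\E_{\sigma^\ast_{-j},\eta_j}(f_j) \leq \max\{f_j(p^\ast),\, v_j + \tfrac{\ep}{2}\} \leq f_j(p^\ast) + \ep = \E_{\sigma^\ast}(f_j) + \ep$, using $f_j(p^\ast) \geq v_j - \tfrac{\ep}{2}$. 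Thus $\sigma^\ast$ is an $\ep$-equilibrium with a pure path of play. Boundedness and upper semi-analyticity of the $f_i$ enter only to guarantee that all minmax values and expectations above are well defined and that the martingale argument of step (i) applies.
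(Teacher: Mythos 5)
Your proof is correct and follows the paper's architecture exactly: the auxiliary game whose winning sets are the $\ep/2$-individually rational plays $Q_{i,\ep/2}(f_i)$, the application of Theorem~\ref{theorem:sumofprob} to extract a play $p^\ast$ in the intersection, and the minmax-punishment construction with the same $\ep$-bookkeeping. The only substantive difference is how the key step $v_i(Q_{i,\ep/2}(f_i))=1$ is established (Proposition~\ref{prop:v(Q)=1} in the paper). The paper cites the existence of a single response $\sigma_i$ that is near-optimal in \emph{every} subgame simultaneously (Mashiah-Yaakovi; Flesch, Herings, Maes, and Predtetchinski); boundedness of $f_i$ then yields a uniform $d>0$ with $\PP_{\sigma_{-i},\sigma_i}(Q_{i,\ep/2}(f_i)\mid h)\ge d$ at every history, and L\'evy's zero-one law (applied to a Borel set agreeing with the analytic set $Q_{i,\ep/2}(f_i)$ up to a null set) forces the probability to equal $1$. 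Your restart construction replaces that citation with a self-contained argument, and it does go through: optional stopping shows that each attempt, started with conditional expectation at least $v_i-\delta$ for $\delta<\ep/2$, avoids the restart region forever with conditional probability bounded below by a positive constant depending only on $\ep$, $\delta$, and the payoff bound; on that event the attempt's conditional-expectation martingale stays above $v_i-\ep/2$ and converges to $f_i$, and the measure induced by the restarting strategy agrees with that of the attempt on the set of plays that never reach a restart history. Hence infinitely many restarts occur with probability zero and $f_i\ge v_i-\ep/2$ almost surely. What your route gains is independence from the cited subgame-optimality results; what the paper's route buys is brevity and the avoidance of the stopping-time and measure-agreement bookkeeping that you correctly flag as the technical heart. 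In either route the one detail to make explicit is the replacement of the analytic set $\{f_i\ge v_i-\ep/2\}$ by a Borel modification before invoking martingale convergence; your appeal to universal measurability covers this.
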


The key idea behind the proof is to consider an auxiliary Blackwell game with winning sets, the winning set of a player consisting of that player's $\ep$-individually rational plays. We show that in the thus-defined auxiliary Blackwell game each player's minmax value equals 1, and apply Theorem \ref{theorem:sumofprob}.  


Given $\ep > 0$ we define the set of \textit{player $i$'s $\ep$-individually rational plays}:
\[Q_{i,\ep}(f_i) := \{p\in [H] : f_i(p) \geq v_i(f_i) - \ep\}.\] 
Also define the set
\[U_{i,\ep}(f_i) := \{p\in [H] : f_i(p) \geq v_i(f_i) + \ep\}.\] 
Note that under the assumptions of Theorem \ref{theorem:minmax_indt} both sets are analytic.

\begin{proposition}\label{prop:v(Q)=1}
Consider a Blackwell game $\Gamma = (I, A, H, (f_{i})_{i \in I})$ and a player $i \in I$. Suppose that player $i$'s payoff function $f_i$ is bounded and upper semi-analytic, and that her minmax values are history-independent. Let $\ep > 0$. Then 
\begin{enumerate}
\item $v_i(Q_{i, \ep}(f_i)) = 1$. In fact, for each strategy profile $\sigma_{-i} \in \Sigma_{-i}$ of players $-i$ there is a strategy $\sigma_i \in \Sigma_{i}$ for player $i$ such that $\mathbb{P}_{\sigma_{-i},\sigma_{i}}(Q_{i, \ep}(f_i)) = 1$.
\item $v_i(U_{i, \ep}(f_i)) = 0$. In fact, there exists a strategy profile $\sigma_{-i} \in \Sigma_{-i}$ of players $-i$ such that for each strategy $\sigma_i \in \Sigma_{i}$ for player $i$ it holds that $\mathbb{P}_{\sigma_{-i},\sigma_{i}}(U_{i, \ep}(f_i)) = 0$.
\end{enumerate}

\begin{proof} 
\noindent\textsc{Claim 1:} It suffices to prove the second statement. Take a strategy profile $\sigma_{-i}$ of players $-i$. It is known that player $i$ has a strategy $\sigma_i$ that is an $\ep/2$-best response to $\sigma_{-i}$ in each subgame (see, for example, Mashiah-Yaakovi \cite[Proposition 11]{Ayala15}, or Flesch, Herings, Maes, and Predtetchinski \cite[Theorem 5.7]{JJJJ}), and therefore 
\[\mathbb{E}_{\sigma_{-i},\sigma_i}(f_i \mid h) \geq
v_i(f_{i,h}) -\ep/2\,=\,v_i(f_i)-\ep/2,\]
for each history $h\in H$. Since the payoff function $f_i$ is bounded, it follows that there is $d>0$ such that 
\[\PP_{\sigma_{-i},\sigma_i}(Q_{i, \ep}(f_i) \mid h) \geq d\]
for each $h \in H$. Indeed, it is easy to verify that one can choose 
\[d\,=\,\frac{\ep}{2(\sup_{p \in [H]}f_i(p)-v_i(f_i)+\ep)}.\]

Since $Q_{i,\ep}(f_i)$ is an analytic set, there is a Borel set $B$ such that $\PP_{\sigma_{-i},\sigma_i}(Q_{i, \ep}(f_i) \bigtriangleup B) = 0$, where $\bigtriangleup$ stands for the symmetric difference of two sets. It follows that $\PP_{\sigma_{-i},\sigma_i}(Q_{i, \ep}(f_i) \bigtriangleup B \mid h) = 0$, and consequently $\PP_{\sigma_{-i},\sigma_i}(B \mid h) \geq d$ for each history $h \in H$ that is reached under $\PP_{\sigma_{-i},\sigma_i}$ with positive probability. L\'{e}vy's zero-one law implies that $\PP_{\sigma_{-i},\sigma_i}(B) = 1$, and hence $\PP_{\sigma_{-i},\sigma_i}(Q_{i, \ep}(f_i)) = 1$.\smallskip

\noindent\textsc{Claim 2:} By an argument similar to that 
in Mashiah-Yaakovi \cite[Proposition 11]{Ayala15} or Flesch, Herings, Maes, and Predtetchinski \cite[Theorem 5.7]{JJJJ}, one shows that there is a strategy profile $\sigma_{-i} \in \Sigma_{-i}$ such that 
\[\E_{\sigma_{-i},\sigma_i}(f_{i} \mid h) \leq v_{i}(f_{i,h}) + \ep/2 = v_{i}(f_{i}) + \ep/2,\]
for each history $h \in H$ and each strategy $\sigma_{i} \in \Sigma_{i}$. Fix any $\sigma_{i} \in \Sigma_{i}$. The rest of the proof of the claim is similar to that of Claim 1.
\end{proof}
\end{proposition}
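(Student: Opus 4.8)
The plan is to prove the two ``in fact'' statements, since each of them yields the corresponding minmax equality: if for every $\sigma_{-i}$ player $i$ can force $\PP_{\sigma_{-i},\sigma_i}(Q_{i,\ep}(f_i))=1$, then $\sup_{\sigma_i}\PP_{\sigma_{-i},\sigma_i}(Q_{i,\ep}(f_i))=1$ for every $\sigma_{-i}$, and taking the infimum over $\sigma_{-i}$ gives $v_i(Q_{i,\ep}(f_i))=1$; dually, a single $\sigma_{-i}$ forcing $\PP_{\sigma_{-i},\sigma_i}(U_{i,\ep}(f_i))=0$ for all $\sigma_i$ gives $v_i(U_{i,\ep}(f_i))=0$. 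Both halves rest on the same three ingredients: a strategy that is near-optimal simultaneously in every subgame, a Markov-type conditional estimate, and L\'evy's zero--one law. History-independence enters precisely to replace $v_i(f_{i,h})$ by the constant $v_i(f_i)$ at every history.

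For Claim~1, I would fix $\sigma_{-i}$ and invoke the known existence (Mashiah-Yaakovi \cite[Proposition 11]{Ayala15}, Flesch, Herings, Maes, and Predtetchinski \cite[Theorem 5.7]{JJJJ}) of a strategy $\sigma_i$ that is an $\ep/2$-best response in every subgame, so that $\E_{\sigma_{-i},\sigma_i}(f_i\mid h)\geq v_i(f_{i,h})-\ep/2=v_i(f_i)-\ep/2$ for every $h\in H$. I would then bound the conditional probability of $Q_{i,\ep}(f_i)$ from below, uniformly in $h$: splitting the conditional expectation over $Q_{i,\ep}(f_i)$ and its complement, using $f_i\leq M:=\sup_p f_i(p)$ on the former and $f_i< v_i(f_i)-\ep$ on the latter, yields $\PP_{\sigma_{-i},\sigma_i}(Q_{i,\ep}(f_i)\mid h)\geq d$ with $d=\ep/\bigl(2(M-v_i(f_i)+\ep)\bigr)>0$.

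The measurability subtlety is that $Q_{i,\ep}(f_i)$ is only analytic, so conditional probabilities and L\'evy's law must be applied to a Borel set. Since analytic sets are universally measurable, there is a Borel $B$ with $\PP_{\sigma_{-i},\sigma_i}(Q_{i,\ep}(f_i)\bigtriangleup B)=0$; the conditional estimate transfers to $B$, giving $\E_{\sigma_{-i},\sigma_i}[1_B\mid\mathcal{F}_t]=\PP_{\sigma_{-i},\sigma_i}(B\mid h)\geq d$ almost surely for every $t$. Letting $t\to\infty$, L\'evy's zero--one law gives $1_B\geq d>0$ almost surely, whence $1_B=1$ almost surely and $\PP_{\sigma_{-i},\sigma_i}(Q_{i,\ep}(f_i))=\PP_{\sigma_{-i},\sigma_i}(B)=1$.

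For Claim~2 the roles of player $i$ and her opponents are exchanged. The analogue of the near-optimality statement---a single profile $\sigma_{-i}$ that holds player $i$'s continuation value down to $v_i(f_i)+\ep/2$ in every subgame \emph{simultaneously}---is the technical crux, and I expect it to be the main obstacle: unlike the best-response statement for a single player, it requires the opponents to punish uniformly across all subgames at once, which for a general bounded upper semi-analytic payoff demands a concatenation argument in the spirit of the cited references rather than a one-shot selection of a near-minmax profile. Granting it, the remainder mirrors Claim~1: the symmetric Markov estimate, now using $f_i\geq m:=\inf_p f_i(p)$ on $U_{i,\ep}(f_i)^c$ and $f_i\geq v_i(f_i)+\ep$ on $U_{i,\ep}(f_i)$, gives $\PP_{\sigma_{-i},\sigma_i}(U_{i,\ep}(f_i)^c\mid h)\geq \ep/\bigl(2(v_i(f_i)+\ep-m)\bigr)>0$ for every $h$ and every $\sigma_i$; a Borel approximation of the universally measurable set $U_{i,\ep}(f_i)^c$ together with L\'evy's zero--one law then forces $\PP_{\sigma_{-i},\sigma_i}(U_{i,\ep}(f_i)^c)=1$, i.e. $\PP_{\sigma_{-i},\sigma_i}(U_{i,\ep}(f_i))=0$, as required.
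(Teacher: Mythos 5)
Your proposal is correct and follows essentially the same route as the paper's own proof: the same subgame-uniform $\ep/2$-best-response (resp.\ subgame-uniform punishment profile, deferred to Mashiah-Yaakovi \cite[Proposition 11]{Ayala15} and Flesch, Herings, Maes, and Predtetchinski \cite[Theorem 5.7]{JJJJ}), the same uniform conditional bound with the same constant $d=\ep/\bigl(2(\sup_p f_i(p)-v_i(f_i)+\ep)\bigr)$, the same Borel approximation of the analytic set, and L\'evy's zero--one law. Your observation that the uniform punishment profile in Claim~2 is the technical crux requiring a concatenation argument matches the paper, which likewise handles that step by appeal to the cited references rather than by a direct construction.
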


\noindent\textbf{The proof of Theorem \ref{theorem:minmax_indt}:} Fix an $\ep > 0$. By Proposition \ref{prop:v(Q)=1}, $v_{i}(Q_{i,\ep}(f_{i})) = 1$. 

Let $\Gamma^\ep  = (I, A, H, (1_{Q_{i, \ep}(f_i)})_{i \in I})$ be an auxiliary Blackwell game where player $i$'s winning set is $Q_{i, \ep}(f_i)$, the set of player $i$'s $\ep$-individually rational plays in $\Gamma$. Each player's minmax value in the game $\Gamma^\ep$ equals $1$. Therefore, the auxiliary game $\Gamma^\ep$ satisfies the hypothesis of Theorem \ref{theorem:sumofprob}. We conclude that the intersection $\bigcap_{i \in I}Q_{i, \ep}(f_i)$ is not empty, and hence there is a play $p^* \in [H]$ such that $f_i(p^*) \geq v_i(f_i) - \ep$, for every $i \in I$.

The following strategy profile is a $2\ep$-equilibrium of $\Gamma$ (see also Aumann and Shapley \cite{Aumann94}):
\begin{itemize}
\item The players follow the play $p^*$, until the first stage in which one of the players deviates from this play. Denote by $i$ the minimal index of a player who deviates from $p^*$ at that stage.
\item From the next stage and on, the players in $-i$ switch to a strategy profile that reduces player~$i$'s payoff to $v_i(f_i) + \ep$. A strategy profile with this property does exist by the assumption of history-independence of the minmax values.  
\end{itemize}
This completes the proof of the theorem. $\Box$\bigskip

We illustrate the construction of the $\ep$-equilibrium with the following example.

\begin{exl}\label{exl.eq}\rm
We consider a 2-player Blackwell game with history-independent action sets where the same stage game is being played at each stage, and a player's objective is to maximize the long-term frequency of the stages she wins. Specifically, $\Gamma = (\{1,2\}, A_{1}, A_{2}, f_{1}, f_{2})$, where $A_1$ and $A_2$ are finite, and 
\[f_{i}(a_{0},a_{1},\ldots) = \limsup_{t \to \infty}\tfrac{1}{t} \cdot \#\{k < t : a_{k} \in U_{i}\},\]
for each $(a_{0},a_{1},\ldots) \in A^{\N}$. Here $U_i$ is player $i$'s stage winning set. We assume that $U_1$ and $U_2$ are disjoint, and let $d_i$ denote player $i$'s minmax value in the stage game.

Note that $f_i$ is a tail function (see Section  \ref{secn.tail}), and it is a limsup function (in the sense of the definition in Section \ref{secn.approx}). We have $d_{i} = v_{i}(f_{i})$, i.e., player~$i$'s minmax value in the stage game is also player~$i$'s minmax value in the Blackwell game.

Take any Nash equilibrium $x \in \prod_{i \in I}\Delta(A_{i})$ of the stage game. Playing $x$ at each stage is certainly a $0$-equilibrium of the Blackwell game $\Gamma$, but typically it is \textbf{not} of the type that appears in the proof of Theorem \ref{theorem:minmax_indt}. An important feature of the $\ep$-equilibrium constructed in the proof is that the equilibrium play is pure; only off the equilibrium path might a player be requested to play a mixed action. In this particular example we can even choose the equilibrium play to be periodic. This can be done as follows.

First note that $d_{1} + d_{2} \leq 1$. This follows since $\PP_{x}(U_{1}) + \PP_{x}(U_{2}) \leq 1$ (because $U_1$ and $U_2$ are disjoint by supposition) and since $d_i \leq \PP_{x}(U_{i})$ for $i=1,2$ (because the Nash equilibrium payoff is at least the minmax value). Let $\ep > 0$. Choose natural numbers $m$, $m_{1}$, and $m_{2}$ such that $d_{i} - \ep \leq \tfrac{m_{i}}{m} \leq d_{i}$, for $i=1,2$. Note that $m_{1} + m_{2} \leq m$. Pick a point $a_{1} \in U_{1}$ and a point $a_{2} \in U_{2}$, and let $p^*$ be the periodic play with period $m_1+m_2$ 
obtained by repeating $a_{1}$ for the first $m_{1}$ stages, and repeating $a_{2}$ for the next $m_{2}$ stages. 
We have 
\[d_{i} - \ep \leq \tfrac{m_{i}}{m} \leq \tfrac{m_{i}}{m_{1} + m_{2}} = f_{i}(p^*),\]
for $i \in \{1,2\}$. One can support $p^*$ as an $\ep$-equilibrium play by a threat of punishment: in case of a deviation by player $1$, player $2$ will switch to playing the minmax action profile from the stage game for the rest of the game, thus reducing $i$'s payoff to $d_{i}$. A symmetric punishment is imposed on player 2 in case of a deviation.

Under the periodic play $p^*$, the sum of the players' payoffs is $1$. There are alternative plays where the payoff to \textit{both} players is 1, which can support 0-equilibria.
For example, consider the non-periodic play $p$ that is played in blocks of increasing size: for each $k \in \N$, the length of block $k$ is $2^{2^k}$. In even (resp.~odd) blocks the players play the action profile $a_1$ (resp.~$a_2$). The reader can verify that since the ratio between the length of block $k$ and the total length of the first $k$ blocks goes to $\infty$, the payoff to both players at $p$ is 1.
\end{exl}

\section{Regularity and the folk theorem}\label{secn.folk}
A payoff vector $w\in \mathbb{R}^{|I|}$, assigning a payoff to each player, is called \emph{an equilibrium payoff} of the Blackwell game $\Gamma$ if for every $\varepsilon>0$ there exists an $\varepsilon$-equilibrium $\sigma^{\varepsilon}$ of $\Gamma$ such that $\|w-\mathbb{E}_{\sigma^{\varepsilon}}(f)\|_{\infty}\leq \varepsilon$. In other words, an equilibrium payoff is an accumulation point of $\ep$-equilibrium payoff vectors as $\ep$ goes to $0$. We let $\mathcal{E}$ denote the set of equilibrium payoffs. Our goal here is to provide a description of $\mathcal{E}$. 

In repeated games with stage payoffs,
where the total payoff is some average (discounted average with low discounting, liminf of average, limsup of average, etc.) of the stage game payoffs,
the folk theorem states that the set of equilibrium payoffs coincide
with the set of all individually rational vectors
that are in the convex hull of the feasible payoff vectors,
see, e.g., Aumann and Shapley \cite{Aumann94}, Sorin \cite{Sorin92}, and Mailath and Samuelson \cite{{Mailath06}}. As we will see, when the payoff functions are general, the set of equilibrium payoffs is the convex hull of the set of feasible payoff vectors that are individually rational. The reason for the difference is that in repeated games with stage payoffs, getting a low payoff in one stage can be compensated by getting a high payoff in the following stage; when the payoff is obtained only at the end of the game, there is no opportunity to compensate low payoffs.

Define
\begin{align*}
Q^\varepsilon(f)&:=\bigcap_{i\in I} Q_{i, \ep}(f_i),\\
W^\varepsilon(f)&:=\{f(p):p\in Q^\varepsilon(f)\}.
\end{align*}

The set $Q^\varepsilon(f)$ is the set of $\ep$-individually rational plays, and $W^\varepsilon(f)$ is the set of feasible and $\ep$-individually rational payoffs vectors.
Whenever convenient, we write simply $Q^\ep$ and $W^\ep$. For every set $X$ in a Euclidean space we denote its closure by $\textnormal{cl}(X)$ and its convex hull by $\textnormal{conv}(X)$.

\begin{theorem}\label{thrm:folk}
Consider a Blackwell game $\Gamma = (I, A, H, (f_{i})_{i \in I})$. Suppose that for each player $i \in I$, player $i$'s action set $A_i(h)$ at each history $h \in H$ is finite, her payoff function $f_i$ is bounded and upper semi-analytic, and her minmax value is history-independent. Then
\[\mathcal{E}=\bigcap_{\ep>0}\textnormal{conv}(\textnormal{cl}(W^\ep(f))).\]
\end{theorem}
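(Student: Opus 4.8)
The plan is to prove the two inclusions $\mathcal{E} \subseteq \bigcap_{\ep>0}\textnormal{conv}(\textnormal{cl}(W^\ep(f)))$ and $\bigcap_{\ep>0}\textnormal{conv}(\textnormal{cl}(W^\ep(f))) \subseteq \mathcal{E}$ separately. The easier direction is the first one. Suppose $w \in \mathcal{E}$, and fix $\ep > 0$. By definition there is a sequence of $\ep_k$-equilibria $\sigma^k$ with $\ep_k \to 0$ whose expected payoff vectors $\E_{\sigma^k}(f)$ converge to $w$. The key observation is that at any $\ep_k$-equilibrium, each player $i$ can guarantee at least her minmax value up to $\ep_k$ by deviating, so $\E_{\sigma^k}(f_i) \geq v_i(f_i) - \ep_k$. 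This forces the equilibrium measure $\PP_{\sigma^k}$ to place most of its mass on plays whose payoff vector is close to being individually rational; more precisely, using the boundedness of $f$, the mass $\PP_{\sigma^k}(Q^\ep(f))$ tends to $1$ as $k \to \infty$. Hence $\E_{\sigma^k}(f)$ is, up to a vanishing error, a barycenter of the payoff measure concentrated on $Q^\ep(f)$, i.e.\ a convex combination of points in $W^\ep(f)$. Passing to the limit and using closedness of $\textnormal{conv}(\textnormal{cl}(W^\ep(f)))$ gives $w \in \textnormal{conv}(\textnormal{cl}(W^\ep(f)))$; since $\ep$ was arbitrary, $w$ lies in the intersection.

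For the reverse inclusion, fix $w \in \bigcap_{\ep>0}\textnormal{conv}(\textnormal{cl}(W^\ep(f)))$ and a target accuracy; the goal is to construct, for each small $\delta > 0$, a $\delta$-equilibrium whose payoff is within $\delta$ of $w$. Fix $\ep > 0$ small. Since $w \in \textnormal{conv}(\textnormal{cl}(W^\ep(f)))$, Carath\'{e}odory's theorem lets me write $w$ as a convex combination $w = \sum_{j=1}^{|I|+1}\lambda_j w_j$ of finitely many points $w_j \in \textnormal{cl}(W^\ep(f))$, and by approximating I may take each $w_j = f(p_j)$ for some $\ep$-individually rational play $p_j \in Q^\ep(f)$ (possibly with $2\ep$ in place of $\ep$ after the closure approximation). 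The construction of the equilibrium then mimics the proof of Theorem \ref{theorem:minmax_indt}: the players agree on a pure path that, over long blocks, cycles through the plays $p_1,\dots,p_{|I|+1}$ with time-fractions approximating the weights $\lambda_j$, so that the realized payoff vector approximates $\sum_j \lambda_j f(p_j) = w$. Any unilateral deviation is met with the minmax punishment, which is available in every subgame by history-independence of the minmax values and holds the deviator to at most $v_i(f_i) + \ep$.

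The main obstacle, and the place where the argument genuinely differs from the standard folk theorem, is verifying that the blockwise concatenation of the plays $p_1,\dots,p_{|I|+1}$ actually yields a play whose payoff is close to the convex combination $\sum_j \lambda_j f(p_j)$. For a general bounded upper semi-analytic payoff function $f$ there is no reason the payoff of a spliced play should relate to the payoffs of its constituent blocks — $f$ need not be any kind of average of stage rewards. This is precisely the phenomenon the introduction warns about: unlike in repeated games with stage payoffs, a low payoff cannot be compensated later, so one cannot simply interleave the $p_j$. The resolution is that one should not concatenate the plays themselves but rather randomize: introduce an initial public randomization (or a jointly controlled lottery, implementable since action sets are finite) that selects index $j$ with probability $\lambda_j$, after which the players follow the single pure play $p_j$ for the entire game. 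The realized payoff is then $f(p_j)$ with probability $\lambda_j$, so the \emph{expected} payoff is exactly $\sum_j \lambda_j f(p_j)$, which lies within the required tolerance of $w$; each $p_j$ being $\ep$-individually rational keeps every player's continuation payoff at least $v_i(f_i) - \ep$ along the chosen path, so the minmax threat makes deviation unprofitable and the profile is a suitable $\delta$-equilibrium. The remaining care is to confirm that a public randomization over finitely many outcomes can be simulated arbitrarily well by the players' own mixed actions in the first few stages, which is routine given finite action sets, and to track the accumulated errors ($\ep$ from individual rationality, the Carath\'{e}odory and closure approximations, and the lottery simulation) so that they can all be absorbed below $\delta$ as $\ep \to 0$.
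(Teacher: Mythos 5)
Your second inclusion ($\bigcap_{\ep>0}\conv{\cl{W^\ep(f)}}\subseteq\mathcal{E}$) ends up essentially where the paper does: after correctly rejecting block concatenation, you use Carath\'eodory, a jointly controlled lottery over finitely many $\ep$-individually rational plays, and minmax punishments made available in every subgame by history-independence. That part is sound.

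The first inclusion, however, has a genuine gap. You derive $\mathbb{E}_{\sigma^k}(f_i)\geq v_i(f_i)-\ep_k$ from the $\ep_k$-equilibrium property and then claim that this ex-ante bound, ``using the boundedness of $f$,'' forces $\mathbb{P}_{\sigma^k}(Q^\ep(f))\to 1$. It does not. A measure placing mass $\tfrac12$ on plays with payoff $v_i(f_i)-1$ and mass $\tfrac12$ on plays with payoff $v_i(f_i)+1$ has expectation $v_i(f_i)$, so it satisfies your ex-ante inequality for every $k$, yet half its mass sits on plays that are far from $\ep$-individually rational. The boundedness of $f$ only gives $\mathbb{P}_{\sigma^k}\bigl([H]\setminus Q_{i,\ep}(f_i)\bigr)\leq\frac{K-v_i(f_i)+\ep_k}{K-v_i(f_i)+\ep}$, which is bounded away from $1$ but does not tend to $0$. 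The concentration statement you need is the content of the paper's Lemma \ref{lemma:largeprob}, and its proof is necessarily dynamic rather than ex-ante: if $\mathbb{P}_{\sigma^\ep}\bigl([H]\setminus Q_{i,\eta}(f_i)\bigr)>\eta$ with $\eta=\ep^{1/3}$, then by L\'evy's zero-one law the conditional probability $X_t=\mathbb{P}_{\sigma^\ep}(Q_{i,\eta}(f_i)\mid\mathcal{F}_t)$ falls below a small threshold $\rho$ at some finite history with probability at least $\eta$; at the first such history player $i$'s continuation payoff under $\sigma^\ep$ is at most $\rho K+(1-\rho)(v_i(f_i)-\eta)$, whereas she can secure $v_i(f_i)-\ep$ in that subgame because her minmax value is history-independent, and the resulting gain is of order $\ep^{2/3}>\ep$, contradicting the $\ep$-equilibrium property. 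Without this conditional-deviation argument (which is also where the history-independence hypothesis enters the first inclusion), your claim that the equilibrium measure concentrates on $Q^\ep(f)$ is unsupported, and the subsequent barycenter step collapses.
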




To prove Theorem~\ref{thrm:folk} we need the following result, which states that every $\ep$-equilibrium assigns high probability to plays in $Q^{\ep^{1/3}}(f)$.

\begin{lemma}\label{lemma:largeprob}
Consider an $n$-player Blackwell game $\Gamma = (I, A, H, (f_{i})_{i \in I})$. Suppose that for each player $i \in I$, player $i$'s action set $A_i(h)$ at each history $h \in H$ is finite, her payoff function $f_i$ is bounded and upper semi-analytic, and her minmax value is history-independent. Let $\ep > 0$ be sufficiently small, and let $\sigma^\ep$ be an $\ep$-equilibrium. Then 
\[\mathbb{P}_{\sigma^{\ep}}(A^{\N} \setminus Q^{\ep^{1/3}}(f)) < n\ep^{1/3}.\]
\end{lemma}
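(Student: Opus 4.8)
The plan is to prove the complementary estimate player by player. Since
\[
A^\N \setminus Q^{\ep^{1/3}}(f) \;=\; \bigcup_{i \in I}\{p \in [H] : f_i(p) < v_i(f_i) - \ep^{1/3}\},
\]
the union bound reduces the lemma to showing that for each fixed player $i$,
\[
\PP_{\sigma^\ep}\big(f_i < v_i(f_i) - \ep^{1/3}\big) \;\le\; \ep^{2/3},
\]
because then the total probability is at most $n\ep^{2/3} < n\ep^{1/3}$ as soon as $\ep < 1$.

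Fix $i$. I would introduce two processes on $[H]$ under $\PP_{\sigma^\ep}$. The first is the Doob martingale $M_t := \E_{\sigma^\ep}(f_i \mid \mathcal{F}_t)$; since $f_i$ is bounded and $\mathcal{F}_\infty$-measurable, martingale convergence gives $M_t \to f_i$ almost surely. The second is the best-response value process $w_t := \sup_{\eta_i \in \Sigma_i}\E_{\sigma^\ep_{-i},\eta_i}(f_i \mid h_t)$, the value player $i$ can secure in the subgame at $h_t$ while the opponents keep playing $\sigma^\ep_{-i}$. Against the fixed profile $\sigma^\ep_{-i}$, player $i$ faces a single-agent decision problem with a bounded upper semi-analytic objective, so $w_t$ is the value function of this finite-action dynamic program and obeys the one-step optimality principle $w_t = \sup_{\mu \in \Delta(A_i(h_t))}\E_{\mu,\sigma^\ep_{-i}(h_t)}[w_{t+1}]$. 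Because $\sigma^\ep_i(h_t)$ is only one feasible choice of $\mu$, taking conditional expectations under $\PP_{\sigma^\ep}$ yields $\E_{\sigma^\ep}(w_{t+1} \mid \mathcal{F}_t) \le w_t$, so $(w_t)$ is a bounded supermartingale and converges almost surely to a limit $w_\infty$.

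The core of the argument is to track the nonnegative gap $\rho_t := w_t - M_t$. Best-responding is at least as good as following $\sigma^\ep_i$, so $w_t \ge M_t$ and $\rho_t \ge 0$; as the difference of a supermartingale and a martingale, $(\rho_t)$ is itself a nonnegative supermartingale, with limit $\rho_\infty = w_\infty - f_i$. Two inputs control its values. At the root, the $\ep$-equilibrium inequality for player $i$ gives $w_0 = \sup_{\eta_i}\E_{\sigma^\ep_{-i},\eta_i}(f_i) \le \E_{\sigma^\ep}(f_i) + \ep = M_0 + \ep$, hence $\rho_0 \le \ep$. For the limit, this is exactly where history-independence enters: for every history $h$ we have $w(h) \ge v_i(f_{i,h}) = v_i(f_i)$, since best-responding against the particular profile $\sigma^\ep_{-i}$ does at least as well as the minmax infimum and the subgame minmax value equals $v_i(f_i)$. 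Passing to the limit, $w_\infty \ge v_i(f_i)$. Consequently, on the event $\{f_i < v_i(f_i) - \ep^{1/3}\}$ we get $\rho_\infty = w_\infty - f_i \ge v_i(f_i) - f_i > \ep^{1/3}$.

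I would finish with Ville's maximal inequality for the nonnegative supermartingale $(\rho_t)$: $\PP_{\sigma^\ep}(\sup_t \rho_t \ge \ep^{1/3}) \le \rho_0/\ep^{1/3} \le \ep/\ep^{1/3} = \ep^{2/3}$. Since $\{f_i < v_i(f_i) - \ep^{1/3}\} \subseteq \{\rho_\infty > \ep^{1/3}\} \subseteq \{\sup_t \rho_t \ge \ep^{1/3}\}$, this delivers the per-player bound $\ep^{2/3}$, and summing over the $n$ players completes the proof. I expect the main obstacle to be the rigorous justification that $w_t$ is a well-defined, universally measurable supermartingale, i.e. the dynamic programming principle for a control problem whose reward $f_i$ is merely bounded and upper semi-analytic rather than continuous; this calls for the measurable-selection machinery for upper semi-analytic value functions (as in Bertsekas and Shreve \cite{Bertsekas96}), with finiteness of the action sets keeping $\Delta(A_i(h))$ compact and the one-step optimality clean. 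The remaining ingredients — martingale convergence, the root-level equilibrium bound, and the maximal inequality — are routine once this is established.
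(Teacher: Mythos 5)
Your proof is correct, but it takes a genuinely different route from the paper's. The paper argues by contradiction: assuming $\PP_{\sigma^\ep}(A^\N\setminus Q_{i,\ep^{1/3}}(f_i))>\ep^{1/3}$, it tracks the Doob martingale $X_t=\PP_{\sigma^\ep}(Q_{i,\ep^{1/3}}(f_i)\mid\mathcal F_t)$, stops at the first ``deviation history'' where $X_t$ drops below $\ep^2/K$, has player $i$ switch there to a strategy guaranteeing $v_i(f_i)-\ep$ (this is where history-independence enters), and shows the resulting gain is of order $\ep^{2/3}$, contradicting the $\ep$-equilibrium property. You instead prove the bound directly: the gap $\rho_t=w_t-M_t$ between the best-response value process and the Doob martingale of $f_i$ is a nonnegative supermartingale with $\rho_0\le\ep$ (the root equilibrium inequality), $w_\infty\ge v_i(f_i)$ (history-independence), and Ville's maximal inequality closes the argument. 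The two proofs use the same two substantive inputs in the same roles, but yours dispenses with the explicit deviation strategy and the contradiction, yields the sharper per-player bound $\ep^{2/3}$ (hence $n\ep^{2/3}$ in total, valid for all $\ep<1$ rather than only ``sufficiently small'' $\ep$), and is arguably cleaner. The one point you flag as a potential obstacle --- rigorously establishing that $w_t$ is a well-defined supermartingale --- is real but lighter than you fear: each $H_t$ is countable, so $\mathcal F_t$-measurability of $w_t$ is automatic, and the inequality $w_t\ge\E_{\sigma^\ep}(w_{t+1}\mid\mathcal F_t)$ needs only the easy half of the dynamic programming principle, obtained by pasting $\delta$-optimal continuation strategies over the finitely many successor profiles; this is essentially the same fact the paper imports from Mashiah-Yaakovi [Proposition 11] and Flesch, Herings, Maes, and Predtetchinski [Theorem 5.7] in the proof of Proposition \ref{prop:v(Q)=1}, so no additional measurable-selection machinery beyond what the paper already relies on is required.
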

\begin{proof} 

Set $\eta := \ep^{1/3}$. It suffices to show that for every $i \in I$,
\begin{equation}\label{equ:231}
\mathbb{P}_{\sigma^{\ep}}(A^{\N} \setminus Q_{i, \eta} (f_i)) < \eta.
\end{equation}

Fix a player $i \in I$ and suppose to the contrary that Eq.~\eqref{equ:231} does not hold. 
We derive a contradiction by showing that player~$i$ has a deviation from $\sigma^{\ep}$ that yields her a gain higher than $\ep$.  

For $t \in \N$, denote by $X_t := \mathbb{P}_{\sigma^{\ep}} (Q_{i,\eta}(f_i) |\mathcal{F}_t)$ the conditional probability of the event $Q_{i,\eta}(f_i)$ under the strategy profile $\sigma^\ep$ given the sigma-algebra $\mathcal{F}_t$. By Doob's martingale convergence theorem, $(X_{t})_{t \in \N}$ converges to the indicator function of the event $Q_{i,\eta}(f_i)$, almost surely under $\PP_{\sigma^{\ep}}$. Since by supposition $\mathbb{P}_{\sigma^{\ep}}(A^{\N} \setminus Q_{i, \eta} (f_i)) > \eta$, we know that $\PP_{\sigma^\ep}(X_{t} \to 0) > \eta$. 

Let $K$ be a bound on the game's payoffs, and let $\rho := \ep^2/K$. Let us call a history $h \in H_{t}$ a \textit{deviation history} if under $h$, stage $t$ is the first one such that $X_t < \rho$. On the event $\{X_{t} \to 0\}$, a deviation history arises at some point during play. Consequently, under $\PP_{\sigma^{\ep}}$, a deviation history arises with probability of at least $\eta$. 

Consider the following strategy $\sigma_{i}'$ of player $i$: play according to $\sigma_i^\ep$ until a deviation history, say $h$, occurs (and forever if a deviation history never occurs). At $h$, switch to playing a strategy which guarantees player $i$ a payoff of at least $v_i(f_i) - \ep$ against $\sigma_{-i}^{\ep}$ in $\Gamma_h$. Such a strategy exists by our supposition of history-independence of the minmax values. To conclude the argument, we compute the gain from the deviation to $\sigma_{i}'$. 

For every deviation history $h \in H_{t}$,
\begin{eqnarray}\label{equ:232}
\mathbb{E}_{\sigma_{-i}^{\ep},\sigma_{i}'}(f_{i}\mid h) &\geq& v_{i}(f_i) - \ep,\\
\mathbb{E}_{\sigma_{-i}^{\ep},\sigma_{i}^{\ep}}(f_{i}\mid h) &\leq& \rho K + (1- \rho)(v_i(f_i) - \eta).
\label{equ:233}
\end{eqnarray}
Eq.~\eqref{equ:232} holds by the choice of $\sigma_{i}'$. To derive Eq.~\eqref{equ:233}, suppose that, following the history $h$, player $i$ conforms to $\sigma_{i}^{\ep}$. Then, conditional on $h$, with probability at most $\rho$ the play belongs to $Q_{i,\eta}(f_i)$, and player $i$'s payoff is at most $K$, and with probability at least $1-\rho$ the play does not belong to $Q_{i,\eta}(f_i)$, and player~$i$'s payoff is at most $v_i(f_i)-\eta$. 

We can now compute the gain from the deviation to $\sigma_{i}'$: If a deviation history never arises, $\sigma_{i}'$ recommends the same actions as $\sigma_{i}^{\ep}$, and therefore the gain is 0.
A deviation history occurs with a probability of at least $\eta$, and thus
\begin{align*}
\mathbb{E}_{\sigma_{-i}^{\ep},\sigma_{i}'}(f_{i}) - \mathbb{E}_{\sigma_{-i}^{\ep},\sigma_{i}^{\ep}}(f_{i}) &\geq 
\eta \bigl(v_i(f_i)-\ep - \rho K - (1 - \rho)(v_i(f_i)-\eta)\bigr)\\ 
&= \eta(-\ep - \ep^2 + \rho v_i(f_i) + \eta - \rho \eta)\\
&= \ep^{\frac{2}{3}}(1 - \ep^{\frac{2}{3}} - \ep^{\frac{5}{3}} + \tfrac{v_i(f_i)}{K} \ep^{\frac{5}{3}} - \tfrac{1}{K} \ep^{2}),
\end{align*}
which behaves like $\ep^{\frac{2}{3}}$ when $\ep$ is small, and therefore exceeds $\ep$. \end{proof}

\noindent\textbf{Proof of Theorem~\ref{thrm:folk}}: Let $|I| = n$. Let $w \in \mathbb{R}^{n}$ be an equilibrium payoff. Assume by contradiction that there is an $\alpha > 0$ such that $w \not\in \textnormal{conv}(\textnormal{cl}(W^{\alpha}))$. For a vector $z \in \mathbb{R}^{n}$ write ${\rm dist}(z)$ to denote the distance from $z$ to the set $\textnormal{conv}(\textnormal{cl}(W^{\alpha}))$ under the $\|\cdot\|_{\infty}$ metric on $\mathbb{R}^{n}$. 
By assumption, $\delta := \tfrac{1}{4}{\rm dist}(w) > 0$. Denote $\ep: = \min(\delta, \alpha^3, (\frac{\delta}{Kn})^3) > 0$, where $K$ is a bound on the game payoff.

From $w$ being an equilibrium payoff, there exits an $\ep$-equilibrium, say $\sigma^\ep$, such that $\|w-\mathbb{E}_{\sigma^\ep}(f)\|_{\infty} \leq \ep \leq \delta$. We have the following chain of inequalities:
\[{\rm dist}(\mathbb{E}_{\sigma^\ep}(f)) \leq \mathbb{E}_{\sigma^\ep}({\rm dist}(f)) \leq 2K \cdot \PP_{\sigma^\ep}(A^{\N} \setminus Q^{\alpha}(f)) \leq 2K \cdot n \cdot  \ep^{\frac{1}{3}} \leq 2\delta,\]
where the first inequality follows from the fact that ${\rm dist}:\mathbb{R}^{n} \to \R$ is a convex function, the second from the fact that $f(p) \in W^{\alpha}$ whenever $p \in Q^{\alpha}(f)$, the third follows since $Q^{\ep^{1/3}}(f) \subseteq Q^{\alpha}(f)$ and   
by Lemma~\ref{lemma:largeprob}, and the last holds by the choice of $\ep$.
But then 
\[{\rm dist}(w) \leq \|w-\mathbb{E}_{\sigma^\ep}(f)\|_{\infty} + {\rm dist}(\mathbb{E}_{\sigma^\ep}(f)) \leq 3\delta,\]
contradicting the choice of $\delta$.

We turn to prove the other direction. Let $w\in \bigcap_{\ep>0}\textnormal{conv}(\textnormal{cl}(W^\ep))$. We need to show that $w$ is an equilibrium payoff. Fix an $\ep>0$. 

Carath\'eodory's Theorem (Carath\'eodory, \cite{Caratheodory07}) implies that $\textnormal{cl} (\textnormal{conv}(W^{\ep})) = \textnormal{conv} (\textnormal{cl}(W^{\ep}))$, hence $w$ is an element of $\textnormal{cl} (\textnormal{conv}(W^{\ep}))$, and thus we can choose a vector $w_{\ep}\in \textnormal{conv}(W^{\ep})$ such that $\|w-w_{\ep}\|_\infty \leq \ep$. We argue that $w_{\ep}$ is a vector of expected payoffs in some $3\ep$-equilibrium.

The payoff $w_{\ep}$ can be presented as a convex combination of $n + 1$ vector payoffs, say \linebreak $f(p^1), \ldots, f(p^{n+1})$, with each $p^{k}$ an element of $Q^{\ep}(f)$. Using jointly controlled lotteries as done, e.g., in Forges \cite{Forges}, Lehrer \cite{Lehrer1996}, or Lehrer and Sorin \cite{LehrerSorin1997}, the players can generate the required randomization over the plays $p^1, \dots, p^{n+1}$ during the first stages of the game. Once a specific play $p^k$ has been chosen, the construction of the $3\ep$-equilibrium is standard: the players play $p^k$, and if player $i$ deviates, her opponents revert to playing a strategy profile that gives player $i$ at most $v_i(f_i)+\ep$. Such a strategy exists by the assumption of history-independence of the minmax values. $\Box$\medskip

\begin{exl}\label{exl.folk}\rm
Consider the 2-player Blackwell game $\Gamma = (\{1,2\}, A_{1}, A_{2}, f_{1}, f_{2})$, where the action sets are $A_1=\left\{{\rm T},{\rm M},{\rm B}\right\}$ and $A_2=\left\{{\rm L},{\rm C},{\rm R}\right\}$, and for a play $p = (a_0,a_1,\ldots)$ the payoffs are
\[(f_1(p),f_2(p)) = \begin{cases}
(1,1)&\text{if }\displaystyle\liminf_{n\to \infty} \tfrac{1}{t} \cdot \#\{k < t: a_k = ({\rm T},{\rm L})\text{ or } a_k = ({\rm M},{\rm C})\} >\frac{1}{2},\\
(4,-1)&\text{if }\displaystyle\liminf_{n\to \infty} \tfrac{1}{t} \cdot \#\{k < t: a_k = ({\rm B},{\rm L})\} = 1,\\
(-1,4)&\text{if }\displaystyle\liminf_{n\to \infty} \tfrac{1}{t} \cdot \#\{k < t: a_k = ({\rm T},{\rm R})\} = 1,\\
(0,0)&\text{otherwise}.
\end{cases}\]
Thus the payoff is $(1,1)$ if the (liminf) frequency of the stages where either (T,L) or (M,C) is played is larger than $\tfrac{1}{2}$. It is $(4,-1)$ if (B,L) is played with frequency of 1, and $(-1,4)$ if (T,R) is played with the frequency of 1. All other cases result in a payoff of $(0,0)$.


Observe that when player 1 plays B repeatedly, the maximal payoff that player 2 can achieve is 0, and this is player 2's minmax value. Similarly, player 1's minmax value is 0. For each $\ep \in (0,1)$, the set $W^{\ep}(f)$ consists of the two points $(0,0)$ and $(1,1)$. By Theorem~\ref{thrm:folk}, the set of equilibrium payoffs $\mathcal{E}$ is the line segment connecting $(0,0)$ and $(1,1)$, see Figure \ref{figure}. 

Naturally, all equilibrium payoffs $w$ are (a) convex combinations of the feasible payoffs vectors $(0,0)$, $(1,1)$, $(4,-1)$, and $(-1,4)$, and (b) individually rational, i.e., they satisfy $w_{1} \geq 0$ and $w_{2} \geq 0$. The set of all payoff vectors satisfying (a) and (b) is represented in Figure~\ref{figure} by the shaded triangle. The point we wish to make here is that the properties (a) and (b) are not sufficient for a payoff vector to be an equilibrium payoff. 

Take for concreteness the point $(3,0)$. This payoff vector is in the convex hull of the feasible payoff vectors and is individually rational. Yet, for $\ep < \tfrac{2}{3}$, there is no $\ep$-equilibrium with the payoff (close to) the vector $(3,0)$. We give a heuristic argument.

Suppose to the contrary that $\sigma$ is such an $\ep$-equilibrium. The strategy profile $\sigma$ necessarily assigns a probability of at least $\tfrac{2}{3}$ to the set of plays that yield the payoff vector $(4,-1)$. But this implies that Player 2 has a deviation that would improve her payoff over the candidate $\ep$-equilibrium by at least $\tfrac{2}{3}$. Player 2 needs to deviate to playing R forever (for example), at any history of the game where her conditional expected payoff under $\sigma$ is close enough to $-1$. Since playing R would yield at least $0$, by such a deviation, she would improve her conditional expected payoff by at least $1$. Levy's zero-one law guarantees that the histories where player 2 is called to deviate in this way arise with a probability close to $\tfrac{2}{3}$, so that the expected gain from the deviation is also close to $\tfrac{2}{3}$.


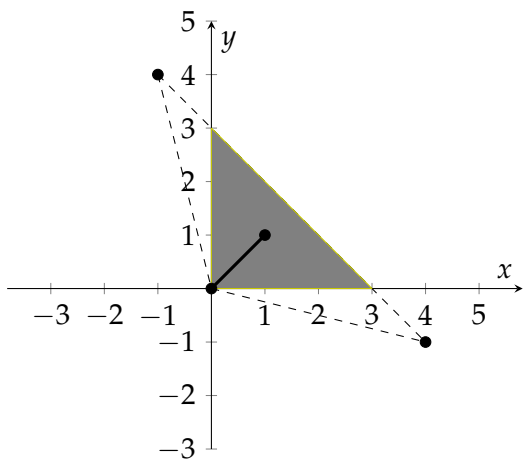
\begin{figure}
\begin{center}
\begin{tikzpicture}
	\begin{axis}[my style, xtick={-3,-2,...,5},ytick={-3,-2,...,5}, xmin=-3, xmax=5, ymin=-3, ymax=5];
	\addplot[ domain=0:1]{x};
	\addplot[dashed,domain=-1:0]{-4*x};
	\addplot[dashed,domain=-0:4]{(-0.25)*x};
	\addplot[dashed,domain=-1:4]{-x+3};
	\addplot[mark=*, only marks] coordinates {(1,1) (0,0) (4,-1)(-1,4)};
\addplot[patch,gray,patch type=triangle] 
	coordinates {
		(0,0) (3,0) (0,3)
	};
	\addplot[very thick, domain=0:1]{x};
	\end{axis}
\end{tikzpicture}
\caption{The set of equilibrium payoffs (the segment connecting $(0,0)$ and $(1,1)$) vs. the set of convex combinations of feasible payoffs that are individually rational (the dark triangle).}\label{figure}
\end{center}
\end{figure}
\end{exl}

The above discussion of Example~\ref{exl.folk} leads to a slightly more general conclusion: if the set of feasible payoffs is finite, then the set of equilibrium payoffs is the convex-hull of the feasible payoffs that are individually rational (equal or larger than the minmax). 
For each player the minmax value is within the finite set of feasible payoffs, and placing any probability on a payoff that is not individually rational enables profitable deviations.


\section{Blackwell games with tail-measurable payoffs}\label{secn.tail}

An important class of games with history-independent minmax values are those where the payoff functions are tail-measurable. 
In this section we concentrate on games with tail-measurable payoffs.

Consider a Blackwell game with history-independent action sets, $\Gamma = (I,(A_{i},f_{i})_{i \in I})$. A set $Q \subseteq A^{\N}$ is said to be a \textit{tail set} if whenever a play $p = (a_{0},a_{1},\ldots)$ is an element of $Q$ and $q = (b_{0},b_{1},\ldots)$ is such that $a_t = b_t$ for all $t \in \N$ sufficiently large, then $q$ is also an element of $Q$. Let $\mathscr{T}$ denote the sigma-algebra of the tail subsets of $A^{\N}$. We note that the tail sigma-algebra $\mathscr{T}$ and the Borel sigma-algebra $\mathscr{B}$ are not nested. For constructions of tail sets that are not Borel, see Rosenthal \cite{Rosenthal75} and Blackwell and Diaconis \cite{Diaconis96}. 

Examples of tail sets are: (1) the winning sets of Example \ref{exl.io}, (2) the set of plays in which a certain action profile $a\in A$ is played with limsup-frequency at most $\tfrac{1}{2}$, and (3) the set of plays in which a certain action profile $a^*\in A$ is played at most finitely many times at even stages (with no restriction at odd stages).

An important class of tail sets are the shift invariant sets. A set $Q \subseteq A^{\N}$ is a \textit{shift invariant set} if for each play $p = (a_0,a_1,\ldots)$, $p \in Q$ if and only if $(a_1,a_2,\ldots) \in Q$. Equivalently, shift invariant sets are the sets that are invariant under the backward shift operator on $A^{\N}$. Shift invariant sets are tail sets. The converse is not true: while the sets in examples (1) and (2) above are shift invariant, that of example (3) is not. 

A function $f:A^{\N} \to \mathbb{R}$ is called tail-measurable if, for each
$r\in\mathbb{R}$, the set $\{p\in A^{\N} : r \leq f(p)\}$ is an element of $\mathscr{T}$. Intuitively, a payoff function is tail measurable if an action taken in any particular stage of the game has no impact on the payoff. The payoff function in Example \ref{exl.eq} is tail-measurable.

\begin{remark}\rm
The assumption that the set of actions of each player is history-independent is required so that the tail-measurability of the payoff functions has a bite. 
If the sets of actions were history-dependent,
then by having a different set of actions at each history,
any function could be turned into tail-measurable.
\end{remark}

We now state one key implication of tail-measurability, namely the history-independence of minmax values. 

\begin{proposition}\label{prop:minmaxtail}
Let $\Gamma = (I,(A_{i},f_{i})_{i \in I})$ be a Blackwell game with history-independent action sets, and let $i \in I$ be a player. If player $i$'s payoff function is bounded, upper semi-analytic, and tail-measurable, then her minmax value is history-independent.
\end{proposition}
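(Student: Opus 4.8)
I want to show that for a tail-measurable, bounded, upper semi-analytic payoff function $f_i$, player $i$'s minmax value is the same in every subgame $\Gamma_h$. Because the action sets are history-independent, each subgame $\Gamma_h$ is itself a Blackwell game on the same tree $A^{\N}$, with payoff function $f_{i,h} = f_i \circ s_h$, where $s_h(p) = hp$ is the prepend-$h$ map. So the whole statement reduces to proving that $v_i(f_{i,h}) = v_i(f_i)$ for every finite history $h$. By an obvious induction on the length of $h$, it suffices to treat the case where $h = (a)$ is a single action profile, i.e.\ to show $v_i(f_{i,(a)}) = v_i(f_i)$ for each $a \in A$.

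\textbf{Key step: comparing the game and a one-step subgame.} Fix $a = (a_j)_{j \in I} \in A$. The idea is that tail-measurability makes the payoff insensitive to the stage-$0$ action profile, so the ``root'' stage of $\Gamma$ is strategically irrelevant for the value. First I would establish the inequality $v_i(f_i) \le v_i(f_{i,(a)})$. Given a strategy profile $\sigma_{-i}$ in the subgame $\Gamma_{(a)}$ that holds player $i$ down to near her subgame value, I extend it to a profile $\widehat{\sigma}_{-i}$ in $\Gamma$ by prescribing that the opponents play $a_{-i}$ at the root and then follow $\sigma_{-i}$ from stage $1$ on, regardless of what player $i$ did at stage $0$. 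The crucial observation is tail-measurability: for \emph{any} play $p = (b_0, b_1, \ldots)$ in $\Gamma$, the value $f_i(p)$ depends only on the tail, so if I modify the stage-$0$ coordinate $b_0$ to equal $a$, the payoff is unchanged. Hence whatever player $i$ does at stage $0$ against $\widehat{\sigma}_{-i}$, the resulting payoff distribution coincides with the distribution obtained by playing the corresponding continuation strategy in $\Gamma_{(a)}$ against $\sigma_{-i}$. This yields $\sup_{\sigma_i} \E_{\widehat{\sigma}_{-i},\sigma_i}(f_i) = \sup_{\tau_i} \E_{\sigma_{-i},\tau_i}(f_{i,(a)})$, and taking infima over opponents gives one inequality. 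The reverse inequality $v_i(f_{i,(a)}) \le v_i(f_i)$ is symmetric: from an opponent profile in $\Gamma$ that holds player $i$ down to near $v_i(f_i)$, I pass to its induced continuation $\sigma_{-i,(a)}$ in the subgame and argue, again via tail-measurability, that player $i$ cannot do better there than she could at the root of $\Gamma$.

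\textbf{The main obstacle and how I handle it.} The delicate point is measurability, not the combinatorial bookkeeping. I am asserting an equality of suprema of expectations, and to make the ``prepend $a$ leaves the payoff unchanged'' argument rigorous I need that $f_{i,(a)}$ is itself bounded and upper semi-analytic, so that all the quantities $\E(\cdot)$ are well defined and the minmax value $v_i(f_{i,(a)})$ makes sense. This follows because $s_{(a)} \colon A^{\N} \to A^{\N}$ is continuous and the preimage of an analytic set under a continuous map between Polish spaces is analytic, so each superlevel set $\{p : r \le f_{i,(a)}(p)\} = s_{(a)}^{-1}\{q : r \le f_i(q)\}$ is analytic; boundedness is inherited directly. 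The second subtlety is that the probability measures $\PP_{\widehat{\sigma}_{-i},\sigma_i}$ and the subgame measures must be related correctly: the measure induced on plays of $\Gamma$ factors as the product of the (deterministic) stage-$0$ action profile $a$ and the continuation measure on $A^{\N}$, and tail-measurability is exactly what lets me discard the stage-$0$ coordinate inside the integral. Once these two measurability and factorization facts are in place, the expectation identity is immediate and both inequalities follow, completing the induction and hence the proof.
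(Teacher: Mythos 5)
Your overall route is the same as the paper's: reduce to showing $v_i(f_{i,(a)}) = v_i(f_i)$ for a single stage-$0$ action profile $a$, and exploit that tail-measurability makes all the one-step continuation payoff functions $f_{i,(b)}$, $b \in A$, literally identical. The paper compresses both inequalities into the one-step dynamic-programming identity $v_i(f_i) = \inf_{x_{-i}}\sup_{x_i}\sum_{a}\prod_j x_j(a_j)\,v_i(f_{i,a})$, whose right-hand side collapses to the common value $v_i(f_{i,\bar a})$; you instead argue the two inequalities separately by composing strategies. Your inequality $v_i(f_i)\le v_i(f_{i,(a)})$ is correct: extending a subgame opponent profile by playing $a_{-i}$ at the root and ignoring player $i$'s stage-$0$ action does give $\sup_{\sigma_i}\E_{\widehat{\sigma}_{-i},\sigma_i}(f_i)=\sup_{\tau_i}\E_{\sigma_{-i},\tau_i}(f_{i,(a)})$, precisely because $f_{i,((a_{-i},b_i))}=f_{i,(a)}$ for every $b_i$.

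The reverse inequality as you sketch it has a genuine gap. You take an opponent profile $\sigma_{-i}$ in $\Gamma$ with $\sup_{\sigma_i}\E_{\sigma_{-i},\sigma_i}(f_i)\le v_i(f_i)+\ep$ and pass to its induced continuation $\sigma_{-i,(a)}$ at the fixed history $(a)$. But $(a)$ may be reached with probability $0$ under $\sigma_{-i}$ (the opponents' stage-$0$ mixture need not put any mass on $a_{-i}$), and off the path of play a root-$\ep$-optimal profile can prescribe arbitrarily generous continuations. Concretely, with $f_i$ the indicator of ``$a^*$ is played infinitely often'' and $v_i(f_i)=0$, the opponents can avoid $a^*$ on path forever while their continuation at the off-path history $(a)$ plays into $a^*$ forever; then $\sup_{\tau_i}\E_{\sigma_{-i,(a)},\tau_i}(f_{i,(a)})=1$, so ``player $i$ cannot do better there than at the root'' fails for that continuation. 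The fix is to not fix the history: since all the $f_{i,(b)}$ coincide, if $v_i(f_{i,(a)})>v_i(f_i)+\delta$, then against any $\sigma_{-i}$ in $\Gamma$ player $i$ can play arbitrarily at stage $0$ and, at whichever history $b$ is actually realized, switch to an $\ep$-optimal reply to $\sigma_{-i,b}$ in $\Gamma_b$, earning at least $v_i(f_{i,(a)})-\ep$ and contradicting the definition of $v_i(f_i)$ for $\ep<\delta$. (Equivalently, prove the dynamic-programming identity the paper uses, whose nontrivial direction is exactly this selection-over-realized-histories argument.) Your measurability remarks about $f_{i,(a)}$ being bounded and upper semi-analytic are correct and needed; you should also note that $f_{i,h}$ is again tail-measurable, so that your induction on the length of $h$ goes through.
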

\begin{proof}
It suffices to show that $v_{i}(f_{i,a}) = v_{i}(f_{i})$ for each $a \in A$, where, with a slight abuse of notation, we write $a$ for a history in stage $1$. Since $f_i$ is tail-measurable, all the functions $f_{i,a}$ for $a \in A$ are identical to each other. Hence, fixing any particular action profile $\bar{a} \in A$, letting $X_i := \Delta(A_i)$ and $X_{-i} := \prod_{j \in -i}X_{j}$, we have
\begin{align*}
v_{i}(f_{i}) &= \inf_{x_{-i} \in X_{-i}}\sup_{x_{i} \in X_{i}} \sum_{a \in A} \prod_{j \in I}x_{j}(a_{j}) \cdot \Big(\inf_{\sigma_{-i} \in \Sigma_{-i}}\sup_{\sigma_{i} \in \Sigma_{i}} \E_{\sigma_{-i},\sigma_{i}}(f_{i,a})\Big)\\ 
&= \inf_{x_{-i} \in X_{-i}}\sup_{x_{i} \in X_{i}} \sum_{a \in A} \prod_{j \in I}x_{j}(a_{j}) \cdot v_{i}(f_{i,\bar{a}}) = v_{i}(f_{i,\bar{a}}). 
\end{align*}
\end{proof}

If the payoff functions of all the players in a game $\Gamma$ are tail-measurable, then, for each fixed stage $t \in \N$, all the subgames of $\Gamma$ starting at stage $t$ are identical. On the other hand, the subgames starting, say, at stage $1$, are not identical to the game itself (see example (3) of a tail-measurable payoff function above). Nonetheless, as Proposition \ref{prop:minmaxtail} implies, the players' minmax values \textit{are} the same in every subgame.

The condition of history-independence of the minmax values is more inclusive than that of tail-measurability of the payoffs; the examples that follow illustrate the point.

\begin{exl}\rm 
Consider a one-player Blackwell game where the player's payoff function is $1_{S}$, the indicator of a set $S \subseteq [H]$. 
If $S$ is dense in $[H]$, then the minmax value of the player is $1$ in each subgame. A dense set may or may not be a tail set. 
\end{exl}

\begin{exl}\rm
We consider a Blackwell game similar to that of Example \ref{exl.io}, but where the stage game may depend on the history, as long as each player's stage minmax value is the same.

Specifically, let $\Gamma = (I, A, H, (1_{W_i})_{i \in I})$. Suppose that at each history $h \in H$, each player $i \in I$ has a stage winning set $U_{i}(h) \subseteq A(h)$, and her winning set in the Blackwell game $\Gamma$ is
\[W_i = \{(a_0,a_1,\ldots) \in [H]:a_t \in U_i(a_0,\ldots,a_{t-1})\text{ for infinitely many }t \in \N\}.\]

Assume that the stage minmax value of player $i$ is the same at each history: there is a number $d_{i}$ such that 
\[d_{i} = \inf_{x_{-i} \in \Delta(A_{-i}(h))} \sup_{x_i \in \Delta(A_i(h))} \PP_{x_{-i},x_i}(U_i(h))\]
for every $h \in H$. Then player $i$'s minmax value in each subgame of $\Gamma$ is $0$ if $d_{i} = 0$, and is $1$ if $d_{i} > 0$. Thus player $i$'s minmax value is history-independent. 

Note that the game $\Gamma$ need not have history-independent action sets. Even when the action sets \textit{are} history-independent, the winning sets need not necessarily be tail-measurable. 

To illustrate the last claim, suppose that there are two players playing matching pennies  at each stage. At stage 0, player 1 wants to match the choice of player 2 (and player 2 wants to mismatch the choice of player~1). Subsequently the roles of the two players swap as follows: the player to win stage $t$ wants to match her opponent's action at stage $t+1$, while the loser at stage $t$ wants to mismatch the action of her opponent at stage $t+1$. Formally, we let $\Gamma = (\{1,2\}, A_1, A_2, 1_{W_{1}}, 1_{W_{2}})$ be the 2-player Blackwell game with history-independent action sets, where $A_1 = A_2 = \{{\rm H},{\rm T}\}$, the winning sets $W_1$ and $W_2$ are as above, and the stage winning sets are defined recursively as follows: 
\[U_1(\oslash) = \{({\rm H},{\rm H}), ({\rm T},{\rm T})\}\quad\text{and}\quad
U_2(\oslash) = \{({\rm H},{\rm T}), ({\rm T},{\rm H})\},\]
and
\[U_1(h,a) = \begin{cases}
U_1(\oslash) &\text{if }a \in U_1(h),\\
U_2(\oslash) &\text{if }a \in U_2(h),
\end{cases}\quad\text{and}\quad
U_2(h,a) = \begin{cases}
U_2(\oslash) &\text{if }a \in U_1(h),\\
U_1(\oslash) &\text{if }a \in U_2(h),
\end{cases}\]
for each $h \in H$ and $a \in A$. The sets $W_1$ and $W_2$ are not tail: out of the two plays 
\begin{align*}
&(({\rm H},{\rm H}), ({\rm H},{\rm H}),({\rm H},{\rm H}),\ldots) \hbox{ and}\\
&(({\rm H},{\rm T}), ({\rm H},{\rm H}), ({\rm H},{\rm H}),\ldots),
\end{align*}
the first is an element of $W_1 \setminus W_2$, while the second is an element of $W_2 \setminus W_1$.
\end{exl}

\begin{exl}\rm
Consider a Blackwell game $\Gamma = (I, A, H, (f_{i})_{i \in I})$, where player $i$'s objective is (as in Example \ref{exl.eq}) to maximize the long-term frequency of the stages she wins:  
\[f_{i}(a_{0},a_{1},\ldots) = \limsup_{t \to \infty}\tfrac{1}{t}\cdot\#\{k < t : a_{k} \in U_{i}(a_0,\ldots,a_{k-1})\}.\]
As in the previous example, $U_{i}(h) \subseteq A(h)$ is player $i$'s stage winning set at history $h \in H$. Assume, as above, that player $i$'s minmax value in each stage game is $d_i$. Then also her minmax value in each subgame of $\Gamma$ is $d_{i}$.
\end{exl}

\begin{exl}\rm 
Start with a Blackwell game with tail-measurable payoff functions. Suppose that the minmax values of all the players in the game are $0$. Take any history $h$, and redefine the payoff functions so that any play having $h$ as a prefix has a payoff of $0$. In the resulting game, the minmax value of each player in each subgame remains $0$, but the payoff functions are no longer tail-measurable (unless the original payoff functions are constant). A similar modification can be performed with any subset of histories, not just one.
\end{exl}

From the results above we now deduce a number of implications for Blackwell games with tail-measurable payoffs.

\begin{corollary}\label{cor.0-1law}
Consider a Blackwell game $\Gamma = (I, (A_{i}, 1_{W_{i}})_{i \in I})$ with  history-independent action sets. If player $i$'s winning set $W_i$ is an analytic tail set, then $v_i(W_i)$ is either $0$ or $1$.
\end{corollary}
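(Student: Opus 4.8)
The plan is to combine the history-independence of the minmax value with L\'evy's zero-one law. By Proposition~\ref{prop:minmaxtail}, since $1_{W_i}$ is bounded, upper semi-analytic, and tail-measurable, player~$i$'s minmax value is history-independent; write $d := v_i(W_i) = v_i(W_{i,h})$ for every history $h$, where $W_{i,h}$ denotes the winning set of the subgame $\Gamma_h$. I want $d \in \{0,1\}$, so I assume toward a contradiction that $0 < d < 1$ and fix $\delta \in (0, 1-d)$.

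First I would pin down the value function $w(h) := \sup_{\sigma_i \in \Sigma_i} \PP_{\sigma_{-i,h},\sigma_i}(W_{i,h})$ of player~$i$ against a suitably chosen opponent profile $\sigma_{-i}$. On the one hand, for any $\sigma_{-i}$ and any $h$ one has $w(h) \geq v_i(W_{i,h}) = d$, because $\sigma_{-i,h}$ is a particular opponent strategy in $\Gamma_h$ and hence the supremum against it dominates the infimum defining $v_i(W_{i,h})$. On the other hand, exactly as in Claim~2 of the proof of Proposition~\ref{prop:v(Q)=1} (using the subgame-uniform near-best-response construction of Mashiah-Yaakovi and of Flesch, Herings, Maes, and Predtetchinski), I can choose $\sigma_{-i}$ so that $\PP_{\sigma_{-i},\sigma_i}(W_i \mid h) \leq d + \delta$ for every history $h$ and every $\sigma_i \in \Sigma_i$; taking the supremum over $\sigma_i$ yields the uniform bound $w(h) \leq d + \delta$ for all $h$. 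Thus against this fixed $\sigma_{-i}$ the conditional value is trapped in $[d, d+\delta]$.

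Now fix a response $\sigma_i$ with $\PP_{\sigma_{-i},\sigma_i}(W_i) > 0$ (possible since $w(\oslash) \geq d > 0$), abbreviate $\PP := \PP_{\sigma_{-i},\sigma_i}$, and consider the martingale $N_t := \PP(W_i \mid \mathcal{F}_t)$. At the realized stage-$t$ history $h_t$ one has $N_t = \PP_{\sigma_{-i,h_t},\sigma_{i,h_t}}(W_{i,h_t}) \leq w(h_t) \leq d + \delta$, so the martingale is bounded above by $d + \delta < 1$ for all $t$, $\PP$-almost surely. But L\'evy's zero-one law gives $N_t \to 1_{W_i}$ $\PP$-a.s., and $W_i$ has positive probability under $\PP$; on $W_i$ the limit equals $1 > d + \delta$, a contradiction. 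Hence no value $d \in (0,1)$ is possible, and $v_i(W_i) \in \{0,1\}$.

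The only delicate point is measurability: $W_i$ is merely analytic, hence only universally measurable, so $\PP(W_i)$ and the conditional probabilities above should be read through a Borel set $B$ with $\PP(W_i \bigtriangleup B) = 0$ (as in Proposition~\ref{prop:v(Q)=1}), applying L\'evy's law to $1_B$ and then transferring back. I expect this bookkeeping, together with verifying the identity $\PP(W_i \mid h_t) = \PP_{\sigma_{-i,h_t},\sigma_{i,h_t}}(W_{i,h_t})$ that feeds the bound $N_t \leq w(h_t)$, to be the only steps requiring genuine care; the heart of the argument is simply the clash between the uniform subgame ceiling $w \leq d + \delta$ and the almost-sure convergence $N_t \to 1_{W_i}$.
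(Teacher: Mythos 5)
Your proof is correct, and it rests on the same two pillars as the paper's: history-independence of the minmax value via Proposition~\ref{prop:minmaxtail}, and the combination of subgame-uniform near-optimal strategies with L\'evy's zero-one law. But you package these differently. The paper's own proof is a two-liner: it observes that for $\ep := v_i(W_i)/2 > 0$ the set of $\ep$-individually rational plays $Q_{i,\ep}(1_{W_i}) = \{p : 1_{W_i}(p) \geq v_i(W_i) - \ep\}$ is exactly $W_i$ (since the indicator is $\{0,1\}$-valued and the threshold is strictly positive), and then quotes Proposition~\ref{prop:v(Q)=1}(1) to conclude $v_i(W_i) = v_i(Q_{i,\ep}(1_{W_i})) = 1$. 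You instead work through the dual side: you build the opponents' subgame-uniform punishment profile (the Claim~2 construction), trap the conditional-probability martingale below $d+\delta < 1$, and derive a contradiction with L\'evy's law on the positive-probability event $W_i$. Your route is self-contained modulo the cited best-response constructions and handles the universal-measurability bookkeeping correctly, but it effectively re-proves the content of Proposition~\ref{prop:v(Q)=1} in the special case of an indicator payoff; recognizing the identity $Q_{i,\ep}(1_{W_i}) = W_i$ would have let you invoke that proposition directly and collapse the argument to a few lines.
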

\begin{proof}
Suppose that $v_i(W_i) > 0$. Let $\ep := v_i(W_i)/2$. In view of Proposition \ref{prop:minmaxtail}, player $i$'s minmax value in $\Gamma$ is history-independent. Applying Proposition \ref{prop:v(Q)=1}, we conclude that $v_i(Q_{i, \ep}(1_{W_{i}})) \linebreak = 1$. But $Q_{i, \ep}(1_{W_{i}}) = W_{i}$ by the choice of $\ep$. 
\end{proof}

The following conclusion follows directly from Proposition \ref{prop:minmaxtail} and Theorem \ref{theorem:minmax_indt}.

\begin{corollary}\label{cor:eq}
Suppose that the game $\Gamma = (I, (A_{i}, 1_{W_{i}})_{i \in I})$ has history-independent action sets. Suppose, furthermore, that for each player $i \in I$, player $i$'s action set $A_i$ is finite and her payoff function $f_i$ is bounded, upper semi-analytic, and tail-measurable. Then for every $\ep>0$ the game admits an $\ep$-equilibrium.
\end{corollary}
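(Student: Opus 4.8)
The plan is to obtain the result as an immediate consequence of two previously established facts, Proposition~\ref{prop:minmaxtail} and Theorem~\ref{theorem:minmax_indt}. The strategy is simply to verify that $\Gamma$ satisfies every hypothesis of Theorem~\ref{theorem:minmax_indt}; the only condition that is not handed to us directly by the assumptions is the history-independence of the minmax values, and this is exactly what Proposition~\ref{prop:minmaxtail} delivers from tail-measurability.

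First I would record the hypotheses that transfer verbatim. Since $\Gamma$ has history-independent action sets, each player $i$'s action set at every history $h \in H$ equals $A_i$, which is finite by assumption; this is the finiteness-of-action-sets requirement of Theorem~\ref{theorem:minmax_indt}. Likewise, each payoff function $f_i$ is bounded and upper semi-analytic by assumption, matching the corresponding requirement. The remaining condition of Theorem~\ref{theorem:minmax_indt} is that each player's minmax value be history-independent, and I would supply it as follows: for every player $i \in I$, the payoff function $f_i$ is bounded, upper semi-analytic, and tail-measurable, so Proposition~\ref{prop:minmaxtail} applies and yields that player~$i$'s minmax value is history-independent.

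Having verified all three conditions, I would invoke Theorem~\ref{theorem:minmax_indt} to conclude that for every $\ep > 0$ the game $\Gamma$ admits an $\ep$-equilibrium, completing the proof. There is no genuine obstacle here; the corollary is a routine composition of the two cited results, and the only step worth stating explicitly is the passage from tail-measurability of the payoffs to history-independence of the minmax values via Proposition~\ref{prop:minmaxtail}, after which Theorem~\ref{theorem:minmax_indt} does all the work.
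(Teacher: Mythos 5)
Your proposal is correct and is exactly the paper's argument: the paper derives the corollary directly from Proposition~\ref{prop:minmaxtail} (tail-measurability implies history-independent minmax values) combined with Theorem~\ref{theorem:minmax_indt}. Nothing further is needed.
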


\section{Concluding remarks}\label{secn.disc}
\noindent\textbf{Approximations by compact sets.} Any Borel probability measure on 
$[H]$ (recall that $[H]$ is Polish under the maintained assumptions), is not merely regular, but is tight: the probability of a Borel set $B \subseteq [H]$ can be approximated from  below by the probability of a compact subset $K \subseteq B$ (Kechris \cite[Theorem 17.11]{Kechris95}). The minmax value is not tight in this sense. To see this, consider any 2-player Blackwell game where player 1's winning set $W_{1}$ is the entire set of plays $[H]$, so that $v_1(W_1) = 1$,
and where $A_{2}(\o)$, player 2's action set at the beginning of the game, is $\N$. 
We argue that $v_1(K) = 0$ for every compact set $K \subseteq W_1$. Indeed,
the projection of a compact set $K \subseteq W_1$ on $A_{2}(\o)$ is a compact, and hence a finite set. 
Therefore, player 2 can guarantee that the realized play is outside $K$ by choosing a sufficiently large action at stage $0$. Thus $v_1(K) = 0$, as claimed.\medskip

\noindent\textbf{Approximations by semicontinuous functions.} The conclusion of Theorem \ref{thrm:tailapprox} would no longer be true without the assumption of history-indepenence of the minmax values. Here we give an example of a game with a limsup payoff function where the minmax value cannot be approximated from below by an upper semicontinuous function. 

Consider a zero-sum game $\Gamma$ where $A_{1} = A_{2} = \{0,1\}$, and player 1's payoff function is  
\[f(a_0,a_1,\ldots) = \begin{cases}
\displaystyle\limsup_{t \to \infty}\tfrac{1}{t}\#\{k < t: a_{2,k}  = 0\},&\text{if }\tau = \infty,\\
2, &\text{if } \tau < \infty\text{ and }a_{2,\tau} = 1,\\
0, &\text{if } \tau < \infty\text{ and }a_{2,\tau} = 0,   
\end{cases}\]
where $\tau = \tau(a_0,a_1,\ldots) \in \N \cup \{\infty\}$ is the first stage where player 1 chooses action $1$. The game was analyzed in Sorin \cite{Sorin86}, who showed that $v_1(f) = 2/3$.

Let $g \leq f$ be a bounded upper semicontinuous function. We argue that $v_1(g) \leq1/2$. 
For $t \in \N$, let $S_t$ denote the set of plays $p$ such that $t \leq \tau(p)$. Note that $S_{t}$ is closed. 
We argue that 
\[\inf_{t \in \N} \sup\{g(p):p \in S_{t}\} \leq 1.\] 
Suppose this is not the case. Take an $\ep > 0$ such that $1 + \ep < \sup\{g(p):p \in S_{t}\}$ for each $t \in \N$. Let $U_{0} := \{1 + \ep \leq g\}$, and for each $t \geq 1$ let $U_{t} := U_{0} \cap S_{t}$. 
The set $U_{t}$ is not empty for each $t \in \N$. Moreover, it is a closed, and hence a compact subset of $A^{\N}$. Thus $U_{0} \supseteq U_{1} \supseteq \cdots$ is a nested sequence of non-empty compact sets. Therefore, there is a play $p \in \bigcap_{t \in \N} U_{t}$. It holds that $\tau(p) = \infty$, and consequently $f(p) \leq 1 < g(p)$, a contradiction.

Take an $\ep > 0$. Find a $t \in \N$ such that $\sup\{g(p):p \in S_{t}\} \leq 1 + \ep$. Suppose that player 2 plays $0$ for the first $t$ stages, and thereafter plays $0$ with probability $1/2$ at each stage. This guarantees that the payoff under the function $g$ is at most $(1 + \ep)/2$.\medskip

\noindent\textbf{On the assumption of finiteness of the action sets.} The hypothesis of Theorem \ref{theorem:sumofprob} requires that the action sets at each history be finite, and its conclusion is not true without this assumption. Indeed, consider the 2-player Blackwell game $\Gamma = (\{1,2\}, A_{1}, A_{2}, W_{1}, W_{2})$ with history-independent action sets $A_1 = A_2 = \N$. Player 1's winning set $W_1$ consists of all plays $(a_{1,t}, a_{2,t})_{t\in\N}$ such that $a_{1,t} > a_{2,t}$ holds for all sufficiently large $t \in \N$, and player 2's winning set $W_2$ consists of all plays $(a_{1,t},a_{2,t})_{t\in\N}$ such that $a_{1,t} < a_{2,t}$ holds for all sufficiently large $t \in \N$. Then $W_1$ and $W_2$ are Borel-measurable and tail-measurable, and $v_1(W_1) = v_2(W_2)=1$, but $W_1\cap W_2=\emptyset$. Hence, the game has no $\ep$-equilibrium for any $\ep < 1/2$. Indeed, an $\ep$-equilibrium $\sigma$ would need to satisfy $\PP_{\sigma}(W_i) \geq v_i(W_i) - \ep > 1/2$ for both $i \in \{1,2\}$.

As discussed above, the assumption that the sets of actions are history-dependent is intertwined with the assumption that the payoffs are tail-measurable.\medskip

\noindent\textbf{Continuity of the minmax.} Unlike Borel probability measures, the minmax value is in general not continuous in the following sense: there is an increasing sequence of Borel sets $C_0 \subseteq C_1\subseteq \ldots$ such that $\lim_{n\to \infty} v_i(C_n) < v_i(\bigcup_{n \in \N} C_n)$. In fact, one can construct an example of this kind where $C_{n}$ is both a $G_{\delta}$ and an $F_{\sigma}$ set, as follows. Consider a 2-player Blackwell game with history-independent action sets where $A_1$ is a singleton (player 1 is a dummy) while $A_2$ contains at least two distinct elements. Let $\{p_{0},p_{1},\ldots\}$ be a converging (with respect to any compatible metric on $A^{\N}$) sequence of plays, no two members of which are the same. Let $C_{n} := A^{\N}\setminus\{p_{n},p_{n+1},\ldots\}$. Then $v_1(C_n) = 0$ for each $n \in \N$ while $v_1(\bigcup_{n \in \N}C_n) = v_1(A^{\N}) = 1$.\medskip

\noindent\textbf{Maxmin value.} Consider a Blackwell game $\Gamma$, and suppose that player $i$'s payoff function $f_i$ is bounded and upper semi-analytic. Player $i$'s \textit{maxmin value} is defined as 
\[z_i(f_i) = \sup_{\sigma_i \in \Sigma_{i}}\inf_{\sigma_{-i} \in \Sigma_{-i}}\mathbb{E}_{\sigma_{-i},\sigma_i}(f_i).\]

The minmax value is not smaller than the maxmin value: $z_i(f_i) \leq v_i(f_i)$. If $I = \{1,2\}$, player 1's payoff function $f_1$ is bounded and Borel-measurable, and for every $h \in H$ either the set $A_1(h)$ of player $1$'s actions or the set $A_2(h)$ of player 2's actions at $h$ is finite, then in fact $z_1(f_1) = v_1(f_1)$, as follows from the determinacy of zero-sum Blackwell games (Martin \cite{Martin98}). Strict inequality might arise for at least two reasons. 

The first is the failure of determinacy. The results of Section \ref{secn.approx} are established under the assumption that the action sets be countable, an assumption that is insufficient to guarantee determinacy of a two-player zero-sum Blackwell game even if  player 1's winning set is clopen. Wald's game provides an illustration. Suppose that each of the two players chooses a natural number; player 1 wins provided that his choice is at least as large as player 2's. Formally, consider a Blackwell game with $I = \{1,2\}$, where the action sets at $\o$ are $A_1(\o) = A_2(\o) = \mathbb{N}$, and player 1's winning set $W_1$ consists of plays such that player 1's stage $0$ action is at least as large as player 2's stage 0 action: $a_{1,0} \geq a_{2,0}$. Then player 1's minmax value is $v_1(W_1) = 1$ while her maxmin value is $z_1(W_1) = 0$.

The second possibility for a maxmin and the minmax values to be different arises in games with three or more players. The reason is that the definitions of both the maxmin and the minmax values impose that the opponents of player $i$ choose their actions independently after each history. The point is illustrated by Maschler, Solan, and Zamir \cite[Example 5.41]{Maschler13}, which can be seen as a 3-player Blackwell game with binary action sets, where the player's payoff function only depends on the stage 0 action profile.

Analogues of Theorems ~\ref{thrm:reg}, \ref{thrm:regfunc}, and \ref{thrm:tailapprox} could be established for the maxmin values using the same approach.\medskip 

\noindent\textbf{Open problems.} Existence of an $\ep$-equilibrium in dynamic games with general \linebreak (Borel-measurable) payoffs has been, and still is, one of the Holy Grails of game theory. A more modest approach, also pursued in this paper, is to establish existence in some special classes of games. Blackwell games, as they are defined here, do not include moves of nature. An interesting avenue for a follow up research is to extend the methods developed in this paper to  the context of stochastic games with general Borel-measurable payoff functions.

Theorems \ref{thrm:reg} and \ref{thrm:regfunc} provide two distinct approximation results, and neither seems to be a consequence of the other. This raises the question of whether there is a natural single generalization that would encompass both these results as two special cases.\medskip  

\section{Appendix: The proof of Theorems~\ref{thrm:reg}, \ref{thrm:regfunc}, and \ref{thrm:tailapprox}}\label{subsecn.proof}
The proofs of Theorems~\ref{thrm:reg} and~\ref{thrm:regfunc} are adaptations of the corresponding arguments in Maitra and Sudderth \cite{Maitra98} and in Martin \cite{Martin98} and are provided here for completeness. Theorem \ref{thrm:tailapprox} follows easily from Theorem~\ref{thrm:reg} and Proposition \ref{prop:v(Q)=1}. 

Consider a Blackwell game $\Gamma = (I, A, H, (f_{i})_{i \in I})$, fix a player $i \in I$, and suppose that player $i$'s payoff function $f_{i}$ is bounded and Borel-measurable. Also assume w.l.o.g. that $0 \leq f_{i} \leq 1$. When we will consider Theorem~\ref{thrm:reg} we will substitute $f_i = 1_{W_i}$.

Given $h \in H$, let $R(h)$ denote the set of one-shot payoff functions $r: A(h) \to [0,1]$. Let $X_i(h) := \Delta(A_i(h))$ denote player $i$'s set of mixed actions at history $h$, and let $X_{-i}(h) := \prod_{j \in -i}X_{j}(h)$. For $x \in \prod_{i \in I}X_{i}(h)$ we write $r(x)$ to denote $\mathbb{E}_{x}(r)$, the expectation of $r$ with respect to $x$. Player $i$'s minmax value of the function $r \in R(h)$ is
\[d_i(r) := \inf_{x_{-i} \in X_{-i}(h)}\sup_{x_i \in X_i(h)} r(x_{-i},x_i).\]

We next introduce the main tool of the proof, an auxiliary two-player game of perfect information denoted by $G_i(f_i,c)$. This is a variation of the games $G_v$ and $G'_v$ in Martin \cite[pp. 1575]{Martin98}.

Given $c \in (0,1]$ and a Borel measurable function $f_i \colon [H] \to [0,1]$, define the game $G_i(f_i,c)$ as follows:
\begin{itemize}
    \item Let $h_0 := \oslash$. Player~I chooses a one-shot payoff function $r_0:A(h_0)\to[0,1]$ such that $d_i(r_0)\geq c$.
    \item Player~II chooses an action profile $a_0 \in A(h_0)$ such that $r_0(a_0) > 0$.
    \item Let $h_1 := (a_0)$. Player~I chooses a one-shot payoff function $r_1:A(h_1)\to[0,1]$ such that $d_i(r_1)\geq r_0(a_0)$.
    \item Player~II chooses an action profile $a_1 \in A(h_1)$ such that $r_1(a_1)>0$.
    \item Let $h_2 := (a_0,a_1)$. Player~I chooses a one-shot payoff function $r_2:A(h_2)\to[0,1]$ such that $d_i(r_2)\geq r_1(a_1)$. And so on.
\end{itemize}
This results in a run\footnote{To distinguish histories and plays of $\Gamma$ from those of $G_i(f_i,c)$, we refer to the latter as \textit{positions} and \textit{runs}. To distinguish the players of $\Gamma$ from those of $G_i(f_i,c)$, we refer to the latter as Player I and Player II, using the initial capital letters.} $(r_0,a_0,r_1,a_1,\ldots)$. Player I wins the run if 
\[\limsup_{t \to \infty}r_{t}(a_{t}) \leq f_{i}(a_0,a_1,\ldots) \quad\text{and}\quad 0 < f_{i}(a_0,a_1,\ldots).\]

Let $T$ be the set of all legal positions in the game $G_i(f_i,c)$. This is a tree on the set $R \cup A$ where $R := \cup_{h \in H}R(h)$. Sequences of even (odd) length in $T$ are Player I's (Player II's) positions. The tree $T$ is pruned: an active player has a legal move at each legal position of the game. Indeed, consider Player I's legal position in the game $G_i(f_i,c)$ and let $h_t$ denote, as above, the sequence of action profiles produced, to date, by Player II. Then the function $r_t$ which is identically equal to $1$ on the set $A(h_t)$ is a legal move for Player I. Consider now Player II's legal position in $G_i(f_i,c)$, let $h_{t}$ denote the sequence of action profiles produced to date by Player II, and let $r_{t}$ be Player I's latest move. Then $d_i(r_t) > 0$. Therefore, there exists an action profile $a_{t} \in A(h_{t})$ such that $r_{t}(a_{t}) > 0$, and thus $a_{t}$ is Player II's legal move at the given position. 

The set $[T]$ is the set of all runs of the game $G_i(f_i,c)$, a subset of $(R \cup A)^{\N}$.

A run is \emph{consistent} with a pure strategy $\sigma_\I$ of Player~I if it is generated by the pair $(\sigma_\I,\sigma_\II)$, for some pure strategy $\sigma_\II$ of Player~II.
Runs that are consistent with pure strategies of Player~II are defined analogously.

Player I's pure strategy $\sigma_{\I}$ in $G_i(f_i,c)$ is said to be \emph{winning} if Player I wins all runs of the game that are consistent with $\sigma_{\I}$. 

\begin{proposition}\label{prop:PlayerI}
Let $c \in (0,1]$ and let $f_i : [H] \to [0,1]$ be a Borel-measurable function. If Player \I~ has a winning strategy in the game $G_i(f_i,c)$, then there exists a closed set $C \subseteq [H]$ and a limsup function $g : [H] \to [0,1]$ such that $g \leq f_i$, $\{g > 0\} \subseteq C \subseteq \{f_i > 0\}$, and $c \leq v_{i}(g)$. In particular, $c \leq v_i(C)$; and if $f_i = 1_{W_i}$, then $C \subseteq W_i$.
\end{proposition}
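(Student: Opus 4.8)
The plan is to read a closed set $C$ and a limsup function $g$ directly off a fixed winning strategy $\sigma_\I$ of Player~I, and then to convert the winning condition into the lower bound on $v_i(g)$ by a martingale argument. First I would record, for every history $h\in H$, the ``threshold'' that $\sigma_\I$ attaches to it. Call $\oslash$ \emph{reachable} and set $u(\oslash):=c$. Inductively, if $h_t$ is reachable then $\sigma_\I$ prescribes a one-shot function $r_t:A(h_t)\to[0,1]$ with $d_i(r_t)\geq u(h_t)$; for each $a_t\in A(h_t)$ declare $(h_t,a_t)$ reachable exactly when $r_t(a_t)>0$ and put $u(h_t,a_t):=r_t(a_t)$, while every non-reachable history receives $u:=0$. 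Thus $u:H\to[0,1]$ satisfies $u(h)>0$ iff $h$ is reachable. Writing $p_{\leq t}$ for the length-$t$ prefix of $p$, I then set $g(p):=\limsup_{t}u(p_{\leq t})$, which is a limsup function with reward $u$, and $C:=\{p\in[H]:u(p_{\leq t})>0\text{ for all }t\}$. Since $\{p:u(p_{\leq t})>0\}$ depends only on $p_{\leq t}$ and is a union of basic clopen sets, $C$ is a countable intersection of clopen sets, hence closed.

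The inclusions and the bound $g\leq f_i$ come from the winning condition. If every prefix of $p$ is reachable, then the run $(r_0,a_0,r_1,a_1,\ldots)$ is consistent with $\sigma_\I$, so Player~I wins; this yields $0<f_i(p)$ together with $\limsup_t r_t(a_t)\leq f_i(p)$, and since $u(p_{\leq t+1})=r_t(a_t)$ the latter reads $g(p)\leq f_i(p)$. If some prefix of $p$ is non-reachable, then $u$ vanishes on all longer prefixes, so $g(p)=0\leq f_i(p)$. Hence $g\leq f_i$; moreover a non-reachable prefix forces $g(p)=0$, giving $\{g>0\}\subseteq C$, while full reachability forces $f_i(p)>0$, giving $C\subseteq\{f_i>0\}$.

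The heart of the proof is $c\leq v_i(g)$. Fix any $\sigma_{-i}$ and any $\eta>0$, and pick $\ep_t>0$ with $\sum_t\ep_t\leq\eta$. I would let player~$i$ play, at each reachable $h_t$, a mixed action $\sigma_i(h_t)$ that is an $\ep_t$-best reply to $\sigma_{-i}(h_t)$ in the one-shot game $r_t$; this is possible because $\sup_{x_i}r_t(\sigma_{-i}(h_t),x_i)\geq d_i(r_t)\geq u(h_t)$. With $M_t:=u(p_{\leq t})$, the identity $\E_{\sigma_{-i},\sigma_i}[M_{t+1}\mid\mathcal{F}_t]=r_t(\sigma_{-i}(h_t),\sigma_i(h_t))\geq M_t-\ep_t$ holds on reachable histories and trivially (both sides $0$) on non-reachable ones. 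Hence $N_t:=M_t+\sum_{s<t}\ep_s$ is a bounded submartingale under $\PP_{\sigma_{-i},\sigma_i}$ with $\E[N_0]=c$, so $\E[M_t]=\E[N_t]-\sum_{s<t}\ep_s\geq c-\eta$ for all $t$. By the martingale convergence theorem $M_t$ converges a.s., so $g=\limsup_t M_t$ equals this limit a.s.; bounded convergence then gives $\E_{\sigma_{-i},\sigma_i}[g]=\lim_t\E[M_t]\geq c-\eta$. Letting $\eta\downarrow 0$ yields $\sup_{\sigma_i}\E_{\sigma_{-i},\sigma_i}[g]\geq c$ for every $\sigma_{-i}$, i.e.\ $v_i(g)\geq c$. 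Finally $g\leq 1_C$ and monotonicity of $v_i$ give $c\leq v_i(g)\leq v_i(C)$, and if $f_i=1_{W_i}$ then $C\subseteq\{f_i>0\}=W_i$.

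The main obstacle I anticipate is precisely the value bound $c\leq v_i(g)$: one must turn the move constraint $d_i(r_t)\geq u(h_t)$ into the conditional-expectation inequality cleanly, and because the action sets are only countable (hence possibly non-compact) the one-shot best reply need not be attained, so the summable-$\ep_t$ approximation and the resulting near-submartingale $N_t$ are essential rather than cosmetic. Some care is also needed at the martingale-convergence step to identify $g$ with the almost-sure limit of $M_t$ under each $\PP_{\sigma_{-i},\sigma_i}$, since $g$ is defined pathwise as a $\limsup$ while the submartingale limit is only defined up to null sets.
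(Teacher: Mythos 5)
Your proposal is correct and follows essentially the same route as the paper's proof: the thresholds $u$ attached to $\sigma_\I$-consistent (``acceptable'') histories, the closed set of plays all of whose prefixes remain consistent, the limsup function built from $u$, and the $\epsilon$-corrected submartingale obtained from near-best replies in the one-shot games. The only cosmetic difference is at the final step, where you pass to the limit via martingale convergence and bounded convergence while the paper invokes the reverse Fatou lemma applied to $\limsup_t\rho_t^{\epsilon}$; both are valid.
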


\begin{proof}
Fix Player I's winning strategy $\sigma_{\I}$ in $G_i(f_i,c)$.\smallskip

\noindent\textsc{Step 1:} Defining $C \subseteq [H]$ and $g : [H] \to [0,1]$.

Let $T_\I \subseteq T$ denote the set of positions in the game $G_i(f_i,c)$ of even length (i.e., Player I's positions) that are consistent with $\sigma_\I$, i.e., those positions that can be reached under a strategy profile $(\sigma_{\I},\sigma_{\II})$ for some  pure strategy of $\sigma_{\II}$ of Player II. Let $\pi_\I : T_\I \to H$ be the projection that maps a position of length $2t$ in $G_i(f_i,c)$ to a history of length $t$ in $\Gamma$: Formally, $\pi_\I(\oslash) := \oslash$, $\pi_\I(r_0,a_0) := (a_0)$, etc. Let $H_\I \subseteq H$ be the image of $T_\I$ under $\pi_\I$. Since in the tree $T_\I$ Player I's moves are uniquely determined by $\sigma_\I$, the map $\pi_\I$ is in fact a bijection between $T_\I$ and $H_\I$. We write $\phi : H_\I \to T_\I$ for the inverse of $\pi_\I$. The map $\phi$ induces a continuous bijection $[H_\I] \to [T_\I]$, which we also denote by $\phi$. 
We say that positions in $H_\I$ are \emph{$\sigma_\I$-acceptable}, and define $C$ to be the set $[H_\I]$.  

For each $t \in \N$, define the function $\rho_t : H_{t} \to \mathbb{R}$ as follows: $\rho_0(\oslash) := c$. Let $t \in \N$ and consider a history $h_{t} \in H_{t}$. If $h_{t}$ is not $\sigma_\I$-acceptable, we define $\rho_{t+1}(h_{t},a_{t}) := 0$ for each $a_{t} \in A(h_{t})$. Suppose that $h_{t}$ is $\sigma_\I$-acceptable, and let $r_{t} := \sigma_\I(\phi(h_{t}))$. For each $a_{t} \in A(h_{t})$ define $\rho_{t+1}(h_{t},a_{t}) := r_{t}(a_{t})$. Note that if $h_{t}$ is $\sigma_\I$-acceptable while $(h_{t},a_{t})$ is not, we have $\rho_{t+1}(h_{t},a_{t}) = r_{t}(a_{t}) = 0$. 

Also define $g : [H] \to [0,1]$ by letting
\[g(a_{0},a_{1},\ldots) := \limsup_{t \to \infty}\rho_t(a_{0},\dots,a_{t-1}).\]
\smallskip

\noindent\textsc{Step 2:} Verifying that $g \leq f_i$ and $\{g > 0\} \subseteq C \subseteq \{f_i > 0\}$.

Since $\sigma_\I$ is Player I's winning strategy in $G_i(f_i,c)$, all runs in $[T_\I]$ are won by Player I, and hence $[H_\I] \subseteq \{f_i > 0\}$. For a play $p = (a_{0},a_{1},\ldots)$ in $[H_\I]$, if $\phi(p) = (r_{0},a_{0},r_{1},a_{1},\ldots)$, then $g(p)$ equals $\limsup_{t \to \infty}r_{t}(a_{t})$. Since the run $\phi(p)$ is won by Player I, we conclude that $g(p) \leq f_i(p)$. Thus $g \leq f_i$ on $[H]$.\smallskip 

\noindent\textsc{Step 3:} Verifying that $c \leq v_{i}(g)$. Since $g \leq 1_{C}$ it will then follow that $c \leq v_i(C)$.

Fix a strategy profile $\sigma_{-i} \in \Sigma_{-i}$ for the players in $-i$ in the game $\Gamma$. Take any $\epsilon > 0$. We define a strategy $\sigma_i$ for player $i$ in the game $\Gamma$ with the property that $\E_{\sigma_{-i},\sigma_{i}}(g) \geq c - 2\epsilon$.\smallskip 

\noindent\textsc{Step 3.1:} Defining player $i$'s strategy $\sigma_{i}$.

Let $r_0 := \sigma_\I(\oslash)$, Player I's first move in $G_i(f_i,c)$ according to her strategy $\sigma_\I$. Define $\sigma_{i}(\oslash)$ to be a mixed action on $A_i(\oslash)$ such that
\[r_0(\sigma_{-i}(\oslash),\sigma_{i}(\oslash)) \geq c - \epsilon.\]
Let $t \geq 1$ and consider a history $h_{t} = (a_{0},\ldots,a_{t-1}) \in H_{t}$ of $\Gamma$. If $h_{t}$ is not $\sigma_\I$-acceptable, then $\sigma_{i}(h_{t})$ is arbitrary. If $h_{t}$ is $\sigma_\I$-acceptable, let $\phi(h_{t}) := (r_{0}, a_{0}, \ldots, r_{t-1}, a_{t-1})$ and $r_{t} := \sigma_\I(\phi(h_{t}))$. Define $\sigma_{i}(h_{t})$ to be a mixed action on $A_i(h_{t})$ such that
\[r_t(\sigma_{-i}(h_{t}),\sigma_{i}(h_{t})) \geq r_{t-1}(a_{t-1})-\epsilon \cdot 2^{-t}.\]

\noindent\textsc{Step 3.2:} Verifying that $\E_{\sigma_{-i},\sigma_{i}}(g) \geq c - 2\epsilon$.

For each $t \in \N$ let us define $\rho_t^{\epsilon} := \rho_t - \epsilon \cdot 2^{-t+1}$. One can think of the functions $\rho_0^{\epsilon}, \rho_1^{\epsilon}, \dots$ as a stochastic process on $[H]$ that is measurable with respect to the filtration $\{\mathcal{F}_{t}\}_{t \in \N}$. We now argue that this process is a submartingale with respect to the measure $\PP_{\sigma_{-i},\sigma_{i}}$. 

Letting $r_0 := \sigma_\I(\oslash)$ we have
\[\mathbb{E}_{\sigma_{-i},\sigma_{i}}(\rho_{1}^{\epsilon}) = \mathbb{E}_{\sigma_{-i},\sigma_{i}}(r_{0}(a_{0})) - \epsilon=
r_{0}(\sigma_{-i}(\oslash),\sigma_{i}(\oslash)) - \epsilon \geq c-2\epsilon = \rho_{0}^{\epsilon}(\oslash).\]
Consider a $\sigma_\I$-acceptable history $h_{t} = (a_{0},\dots,a_{t-1}) \in H_t$ of length $t \geq 1$. Let $(r_{0},a_{0},\ldots,r_{t-1},a_{t-1}) := \phi(h_{t})$ and $r_{t} := \sigma_\I(\phi(h_{t}))$. We have
\begin{align*}
\mathbb{E}_{\sigma_{-i},\sigma_{i}}(\rho_{t+1}^{\epsilon} | h_{t}) &= \mathbb{E}_{\sigma_{-i},\sigma_{i}}(r_{t}(a_{t}) | h_{t}) - \epsilon \cdot 2^{-t}\\ &= r_{t}(\sigma_{-i}(h_{t}),\sigma_{i}(h_{t})) - \epsilon \cdot 2^{-t}\\ &\geq r_{t-1}(a_{t-1}) - \epsilon \cdot 2^{-t} - \epsilon \cdot 2^{-t}\\ &= \rho_t^{\epsilon}(h_{t}).
\end{align*}
On the other hand, if $h_{t}$ is not $\sigma_\I$-acceptable, then
\[\mathbb{E}_{\sigma_{-i},\sigma_{i}}(\rho_{t+1}^{\epsilon} | h_{t}) = -\epsilon \cdot 2^{-t} > -\epsilon \cdot 2^{-t+1} = \rho_t^{\epsilon}(h_{t}).\]
This establishes the submartingale property for $\rho_0^{\epsilon}, \rho_1^{\epsilon}, \dots$.

The submartingale property implies that $\E_{\sigma_{-i},\sigma_{i}}(\rho_t^{\epsilon}) \geq \rho_0^{\epsilon}(\oslash) = c - 2\epsilon$ for each $t \in \N$. Using Fatou lemma we thus obtain 
\[\E_{\sigma_{-i},\sigma_{i}}(g) =\E_{\sigma_{-i},\sigma_{i}}(\limsup_{t \to \infty}\rho_t) \geq \E_{\sigma_{-i},\sigma_{i}}(\limsup_{t \to \infty}\rho_t^{\epsilon}) \geq \limsup_{t \to \infty}\E_{\sigma_{-i},\sigma_{i}}(\rho_t^{\epsilon}) \geq c - 2\epsilon,\]
as desired.
\end{proof}

\begin{proposition}\label{prop:PlayerII}
Let $c \in (0,1]$ and let $f_i : [H] \to [0,1]$ be a Borel-measurable function. If Player \II~has a winning strategy in the game $G_i(f_i,c)$, then for every $\epsilon > 0$ there exists an open set $O \subseteq [H]$ and a limsup function $g : [H] \to [0,1]$ such that $f_i \leq g$, $\{f_i = 1\} \subseteq O \subseteq \{g = 1\}$, and $v_{i}(g) \leq c + \epsilon$. In particular, $v_i(O) \leq c + \epsilon$; and if $f_i = 1_{W_i}$, then $W_i \subseteq O$.
\end{proposition}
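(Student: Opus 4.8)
The plan is to dualize the proof of Proposition~\ref{prop:PlayerI}, interchanging the two players and reversing all inequalities. I fix a winning strategy $\sigma_\II$ for Player~II in $G_i(f_i,c)$, and from it construct a limsup function $g$ with $f_i\le g$ together with an open set $O$, and exhibit a single strategy $\sigma_{-i}$ for the opponents of player~$i$ in $\Gamma$ witnessing $v_i(g)\le c+\epsilon$. One should anticipate a structural asymmetry with Proposition~\ref{prop:PlayerI}: there Player~I's moves were pinned down along the consistent tree, so the projection $\pi_\I$ was a bijection, whereas here it is Player~II's moves that are determined by $\sigma_\II$ while Player~I's payoff-function moves branch. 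Consequently the analogous projection is no longer injective, and $g$ must be defined on all of $[H]$, not merely on the image of the consistent tree.

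For the value bound I would have the opponents run a fictitious copy of $G_i(f_i,c)$. At a reached history $h_t$ they carry a threshold $c_t$ (with $c_0=c$ and $c_{t+1}=r_t(a_t)$); they propose, in the role of Player~I, a one-shot payoff function $r_t$ with $d_i(r_t)=c_t$, feed it to $\sigma_\II$, and play an approximately minmax-optimal mixed action $x_{-i}^\ast\in X_{-i}(h_t)$ so that $\sup_{x_i}r_t(x_{-i}^\ast,x_i)\le c_t+\epsilon\cdot 2^{-t-1}$. Setting $\rho_{t+1}:=r_t(a_t)$ and $g:=\limsup_{t\to\infty}\rho_t$ makes $g$ a limsup function by construction. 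I define $O$ to be the union of the basic open sets $O(h)$ over those histories $h$ at which the threshold first reaches the value $1$; this $O$ is open, and on $O$ the threshold stays equal to $1$ from that stage on (as $d_i(r_t)=1$ forces $r_t(a_t)=1$), so $g=1$ there automatically.

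The main estimate, $v_i(g)\le c+\epsilon$, is where the argument dualizes Step~3 of Proposition~\ref{prop:PlayerI}. Since the opponents play (nearly) minmax-optimally against their own proposal, one gets $\E_{\sigma_{-i},\sigma_i}(\rho_{t+1}\mid h_t)\le c_t+\epsilon\cdot 2^{-t-1}$ for \emph{every} strategy $\sigma_i$ of player~$i$; hence the perturbed process $\rho_t^{\epsilon}:=\rho_t+\epsilon\cdot 2^{-t}$ is a bounded supermartingale under $\PP_{\sigma_{-i},\sigma_i}$ with $\rho_0^{\epsilon}=c+\epsilon$. Here lies the key contrast with Proposition~\ref{prop:PlayerI}, which used a submartingale and the Fatou lemma for a \emph{lower} bound: for the \emph{upper} bound I would instead invoke the convergence theorem for bounded supermartingales, so that $g=\limsup_t\rho_t=\lim_t\rho_t$ almost surely and $\E_{\sigma_{-i},\sigma_i}(g)\le\E_{\sigma_{-i},\sigma_i}(\rho_0^{\epsilon})=c+\epsilon$. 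Taking the supremum over $\sigma_i$ and the infimum over opponents' profiles gives $v_i(g)\le c+\epsilon$, and since $1_O\le g$ this also yields $v_i(O)\le c+\epsilon$.

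The remaining properties, $f_i\le g$ and $\{f_i=1\}\subseteq O\subseteq\{g=1\}$, are exactly where the hypothesis that $\sigma_\II$ is winning enters: along runs consistent with $\sigma_\II$ the winning condition forces $\limsup_t r_t(a_t)>f_i$ whenever $f_i>0$, which gives $g\ge f_i$; and since no play with $f_i=1$ can be won by Player~II while $\limsup_t r_t(a_t)\le 1$, any attempt to hold the threshold below $1$ must break on a set of histories containing all plays with $f_i=1$, placing $\{f_i=1\}$ inside $O\subseteq\{g=1\}$. I expect the principal obstacle to be the \textbf{strategy transfer}: in $G_i(f_i,c)$ Player~II commits to a full action profile $a_t$, whereas in $\Gamma$ the opponents control only the coordinate $a_{-i,t}$ and player~$i$ chooses $a_{i,t}$ freely. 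Reconciling this—selecting the fictitious $r_t$ so that the realized profiles form a run consistent with $\sigma_\II$, so that its winning property applies, while simultaneously maintaining $d_i(r_t)=c_t$ so that the supermartingale survives, and handling the timing of committing to $x_{-i}^\ast$ before player~$i$ acts—is the crux of the proof. A secondary technical point will be to confirm that $g$ dominates $f_i$ on the $\PP$-null and unreached plays as well, and that $O$ is open with the stated containments.
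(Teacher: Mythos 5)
Your high-level plan correctly anticipates the dual structure of the paper's argument (a supermartingale in place of a submartingale, the martingale convergence theorem in place of Fatou, and the opponents of player~$i$ simulating Player~\I's role in $G_i(f_i,c)$), and you correctly locate the central difficulty in the strategy transfer. But you leave that difficulty unresolved, and the construction you actually sketch does not survive it. If the opponents pre-commit to a single one-shot function $r_t$ with $d_i(r_t)=c_t$ and ``feed it to $\sigma_\II$,'' then $\sigma_\II$ returns one deterministic action profile, whereas the profile realized in $\Gamma$ is determined jointly by the opponents' mixed action and by player~$i$'s free choice. Generically these disagree, so the run $(r_0,a_0,r_1,a_1,\ldots)$ built from the \emph{realized} profiles is not consistent with $\sigma_\II$, the hypothesis that $\sigma_\II$ is winning gives no information about it, and the inequalities $f_i\le g$ and $\{f_i=1\}\subseteq O$ cannot be derived. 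Your supermartingale estimate $v_i(g)\le c+\epsilon$ would go through, but for a function $g$ that need not dominate $f_i$.

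The missing idea, which is the heart of the paper's proof, is to reverse the quantification over $r_t$: for each history $h_t$ and each \emph{possible} realized profile $a_t$, consider the set $R^{*}(h_t,a_t)$ of all of Player~\I's legal moves at $\psi(h_t)$ to which $\sigma_\II$ responds with precisely $a_t$, call $(h_t,a_t)$ acceptable when this set is nonempty, and define $u_{h_t}(a_t):=\inf\{r_t(a_t):r_t\in R^{*}(h_t,a_t)\}$, with $u_{h_t}(a_t):=1$ otherwise. One then proves (by the contradiction argument with $r_t(a):=\max\{u_{h_t}(a)-\lambda,0\}$) that $d_i(u_{h_t})$ is at most the previous threshold, and has the opponents play approximately minmax-optimally against $u_{h_t}$ rather than against a pre-chosen $r_t$. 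Whatever profile is then realized, if it is acceptable one selects \emph{a posteriori} an $r_t\in R^{*}(h_t,a_t)$ with $r_t(a_t)$ within $\epsilon\cdot 3^{-t-2}$ of $u_{h_t}(a_t)$, which keeps the constructed run consistent with $\sigma_\II$ and lets the winning property yield $f_i\le g$; if it is not acceptable the play enters $O$ and $g=1$ there. Your definition of $O$ via ``the threshold first reaches $1$'' is also not the right set: the paper's $O$ is the complement of the set of plays all of whose prefixes are $\sigma_\II$-acceptable, and your claim that $d_i(r_t)=1$ forces $r_t(a_t)=1$ for the realized $a_t$ is false in general. So the proposal, as written, has a genuine gap at exactly the point you flag as the crux.
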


\begin{proof}
Fix Player II's winning strategy $\sigma_\II$ in $G_i(f_i,c)$.\smallskip

\noindent\textsc{Step 1:} Defining $O \subseteq [H]$ and $g : [H] \to [0,1]$. 

We recursively define (a) the notion of a \textit{$\sigma_\II$-acceptable} history in the game $\Gamma$, (b) for each $\sigma_\II$-acceptable history $h$ in $\Gamma$, Player~I's position $\psi(h)$ in the game $G_i(f_i,c)$, and (c) for each $\sigma_\II$-acceptable history $h$ of $\Gamma$, a function $u_{h} : A(h) \to [0,1]$.

The empty history $\oslash$ of $\Gamma$ is $\sigma_\II$-acceptable. We define $\psi(\oslash) := \oslash$, the empty history in $G_i(f_i,c)$. Let $t \in \N$ and consider a history $h_{t} \in H_{t}$ of the game $\Gamma$. If $h_{t}$ is not $\sigma_\II$-acceptable, so is the history $(h_{t},a_{t})$ for each $a_{t} \in A(h_{t})$. Suppose that $h_{t}$ is $\sigma_\II$-acceptable and that Player I's position $\psi(h_{t})$ in $G_i(f_i,c)$ has been defined. Take $a_{t} \in A(h_{t})$. Let $R^{*}(h_{t},a_{t})$ denote the set of Player~I's legal moves at position $\psi(h_{t})$ to which $\sigma_\II$ responds with $a_{t}$:
\[R^{*}(h_{t},a_{t}) := \{r_{t} \in R(h_t): (\psi(h_{t}),r_{t}) \in T\text{ and }\sigma_\II(\psi(h_{t}),r_{t}) = a_{t}\}.\] 
The history $(h_{t},a_{t})$ is defined to be $\sigma_\II$\emph{-acceptable} if $R(h_{t},a_{t})$ is not empty. In this case we define
\[u_{h_{t}}(a_{t}) := \inf\{r_{t}(a_{t}): r_{t} \in R^{*}(h_{t},a_{t})\}.\]
Choose $r_{t} \in R^{*}(h_{t},a_{t})$ with the property that
\begin{equation}\label{eqn:proprt}
u_{h_{t}}(a_{t}) \leq r_{t}(a_{t}) \leq u_{h_{t}}(a_{t}) + \epsilon \cdot 3^{-t-2},
\end{equation}
and define $\psi(h_{t},a_{t}) := (\psi(h_{t}),r_{t},a_{t})$.

Finally, extend the definition of $u_{h}$ to all histories $h$ of $\Gamma$ by setting $u_{h}(a) := 1$ whenever $(h,a)$ is not $\sigma_\II$-acceptable. 

Let $H_{\II}$ be the set of $\sigma_\II$-acceptable histories of $\Gamma$. We define the set $O$ to be the complement of $[H_\II]$, that is $O := [H] \setminus [H_\II]$. Since $[H_\II]$ is a closed subset of $[H]$ (e.g. Kechris \cite[Proposition 2.4]{Kechris95}), $O$ is an open subset of $[H]$. Let $T_{\II} \subseteq T$ be the image of $H_\II$ under $\psi$. The function $\psi_{\II} : H_{\II} \to T_{\II}$ induces a continuous function $\psi_{\II} : [H_{\II}] \to [T_{\II}]$. Note that all runs in $[T_{\II}]$ are consistent with Player II's winning strategy $\sigma_\II$.  

For $t \in \N$ define a function $\upsilon_{t} : H_{t} \to \mathbb{R}$ by letting $\upsilon_0(\oslash) := c$; and for each $t \in \N$ and each history $(h_{t},a_{t}) \in H_{t+1}$, by letting $\upsilon_{t+1}(h_{t},a_{t}) := u_{h_{t}}(a_{t})$. Note that, for $t \in \N$ and $h_{t} \in H_{t}$, we have $\upsilon_{t}(h_{t}) = 1$ whenever $h_{t}$ is not $\sigma_\II$-acceptable.  

Also define $g : [H] \to [0,1]$ by letting 
\[g(a_0,a_1,\ldots) := \limsup_{t \to \infty}\upsilon_{t}(a_0,\ldots,a_{t-1}).\]

\noindent\textsc{Step 2:} Verifying that $f_i \leq g \leq 1$ and $\{f_i = 1\} \subseteq O \subseteq \{g = 1\}$.

The function $g$ is equal to $1$ on the set $O$; thus $O \subseteq \{g = 1\}$. Consider a play $p = (a_0,a_1,\ldots) \in [H_\II]$, and let $\psi(p) := (r_{0}, a_{0}, r_{1}, a_{1}, \ldots)$. It follows by \eqref{eqn:proprt} that 
\[g(p) := \limsup_{t \to \infty}r_{t}(a_{t}).\] 
Since the run $\psi(p)$ is won by Player II, it must hold that either $f_i(p) < g(p)$ or $0 = f_i(p)$; in either case $f_i(p) < 1$ and $f_i(p) \leq g(p)$. We conclude that $[H_{\II}] \subseteq \{f_i < 1\}$, or equivalently that $\{f_i = 1\} \subseteq O$, and that $f_i \leq g$ on $[H]$.\smallskip

\noindent\textsc{Step 3:} Verifying that $v_i(g) \leq c + \epsilon$. Since $1_{O} \leq g$, it then follows that $v_i(O) \leq c + \epsilon$.\smallskip

\noindent\textsc{Step 3.1:} Defining a strategy profile for player~$i$'s opponents.

First we argue that
\begin{equation}\label{eqn argue}
d_i(u_{\oslash}) \leq c.
\end{equation}
Suppose to the contrary that $c \leq d_i(u_{\oslash}) - \lambda$ for some $\lambda > 0$. Define $r_0 \in R(\oslash)$ by letting $r_0(a) := \max\{u_{\oslash}(a) - \lambda,0\}$.
Since $u_{\oslash} - \lambda \leq r_0$, it holds that $c \leq d_{i}(u_{\oslash}) - \lambda \leq d_{i}(r_0)$. Consequently, $r_0$ is a legal move of Player~I in the game $G_i(f_i,c)$ at position $\oslash$. Denote $a_0 := \sigma_\II(r_{0})$. As $a_0$ is Player~II's legal move in $G_i(f_i,c)$ at position $(r_0)$, it must be the case that $r_0(a_0) > 0$, and hence $r_0(a_0) = u_{\oslash}(a_{0}) - \lambda$. On the other hand, $r_{0} \in R^{*}(\oslash,a_{0})$, so the definition of $u_{\oslash}$ implies that  $u_{\oslash}(a_{0}) \leq r_{0}(a_{0})$, a contradiction.

Take $t \geq 1$, let $h_{t} := (h_{t-1},a_{t-1}) \in H_{t}$ be a $\sigma_\II$-acceptable history, and let $r_{t-1}$ be such that $\psi(h_{t}) = (\psi(h_{t-1}),r_{t-1},a_{t-1})$. Then
\begin{equation}\label{eqn:argue1}
d_i(u_{h_{t}}) \leq r_{t-1}(a_{t-1}).
\end{equation}

Indeed, suppose to the contrary that $r_{t-1}(a_{t-1}) \leq d_i(u_{h_{t}}) -  \lambda$ for some $\lambda > 0$. Define $r_{t} \in R(h_{t})$ by letting $r_{t}(a) := \max\{u_{h_{t}}(a) - \lambda,0\}$. Since $u_{h_{t}} - \lambda \leq r_{t}$, it holds that $r_{t-1}(a_{t-1}) \leq d_{i}(u_{h_{t}}) - \lambda \leq d_{i}(r_{t})$. Consequently, $r_{t}$ is a legal move of Player~I at position $\psi(h_{t})$. Let $a_{t} := \sigma_\II(\psi(h_{t}), r_{t})$. As $a_{t}$ is Player~II's legal move at position $(\psi(h_{t}), r_{t})$, it must be the case that $r_{t}(a_{t}) > 0$, and hence $r_{t}(a_{t}) = u_{h_{t}}(a_{t}) - \lambda$. On the other hand, $r_{t} \in R^{*}(h_{t}, a_{t})$, so the definition of $u_{h_{t}}$ implies that $u_{h_{t}}(a_{t}) \leq r_{t}(a_{t})$, a contradiction.

We now define a strategy profile $\sigma_{-i}$ of $i$'s opponents in $\Gamma$ as follows: For a history $h_{t} \in H_{t}$ of $\Gamma$ let $\sigma_{-i}(h_{t}) \in X_{-i}(h)$ be such that 
\begin{equation}\label{eqn str opponent}
u_{h_{t}}(\sigma_{-i}(h_{t}),x_{i}) \leq d_{i}(u_{h_{t}}) + \epsilon \cdot 3^{-t-1}\text{ for each }x_{i} \in \Delta(A_i(h_{t})).
\end{equation}
\smallskip

\noindent\textsc{Step 3.2:} Verifying that $\E_{\sigma_{-i},\sigma_{i}}(g) \leq c + \epsilon$ for each strategy $\sigma_{i} \in \Sigma_{i}$ of player $i$ in $\Gamma$.

Fix a strategy $\sigma_{i} \in \Sigma_{i}$. For $t \in \N$ define a function $\upsilon_{t}^\epsilon := \upsilon_{t} + \epsilon \cdot 3^{-t}$. The sequence $\upsilon_0^\epsilon, \upsilon_1^\epsilon, \dots$ could be thought of as a process on $[H]$, measurable with respect to the filtration $\{\mathcal{F}_{t}\}_{t \in \N}$. We next show that the process is a supermartingale w.r.t $\PP_{\sigma_{-i},\sigma_{i}}$.

By Eqs. \eqref{eqn str opponent} and \eqref{eqn argue},
\begin{align*}
\mathbb{E}_{\sigma_{-i},\sigma_{i}}(\upsilon_1^\epsilon) 
&= \mathbb{E}_{\sigma_{-i},\sigma_{i}}(u_{\oslash}(a_0)) + \epsilon\cdot 3^{-1}\\
&=  u_{\oslash}(\sigma_{-i}(\oslash),\sigma_{i}(\oslash)) + \epsilon\cdot 3^{-1}\\
&\leq d_{i}(u_{\oslash}) + \epsilon\cdot 2 \cdot 3^{-1}\\ 
&\leq c + \epsilon = \upsilon_{0}^\epsilon(\oslash).
\end{align*}
Take $t \geq 1$, let $h_{t} = (h_{t-1},a_{t-1}) \in H_{t}$ be a $\sigma_\II$-acceptable history, and let $r_{t-1}$ be such that $\psi(h_{t}) = (\psi(h_{t-1}),r_{t-1},a_{t-1})$. We have by Eqs. \eqref{eqn str opponent}, \eqref{eqn:argue1}, and \eqref{eqn:proprt}: 
\begin{align*}
\mathbb{E}_{\sigma_{-i},\sigma_{i}}(\upsilon_{t+1}^\epsilon \mid h_{t})
&= \mathbb{E}_{\sigma_{-i},\sigma_{i}}(u_{h_{t}}(a_t) \mid h_{t}) + \epsilon\cdot3^{-t-1}\\
&= u_{h_{t}}(\sigma_{-i}(h_{t}),\sigma_{i}(h_{t})) + \epsilon\cdot3^{-t-1}\\
&\leq d_{i}(u_{h_{t}}) + \epsilon \cdot 2 \cdot 3^{-t-1}\\
&\leq r_{t-1}(a_{t-1}) + \epsilon \cdot 2 \cdot 3^{-t-1}\\
&\leq u_{h_{t-1}}(a_{t-1}) + \epsilon \cdot 3 \cdot 3^{-t-1}\\
&= \upsilon_{t}^\epsilon(h_{t-1},a_{t-1}) = \upsilon_{t}^\epsilon(h_{t}).
\end{align*}
If, on the other hand, the history $h_{t}$ is not $\sigma_\II$-acceptable, then 
\[\mathbb{E}_{\sigma_{-i},\sigma_{i}}(\upsilon_{t+1}^\epsilon \mid h_{t}) = 1+\epsilon\cdot 3^{-t-1} \leq 1+\epsilon\cdot 3^{-t} = \upsilon_{t}^\epsilon(h_{t}).\]

Since the process $\upsilon_0^\epsilon,\upsilon_1^\epsilon,\dots$ is bounded below (by 0), by the Martingale Convergence Theorem, it converges pointwise $\PP_{\sigma_{-i},\sigma_{i}}$-almost surely; whenever the process converges, its limit is $g$. Hence $\mathbb{E}_{\sigma_{-i},\sigma_{i}}(g) = \mathbb{E}_{\sigma_{-i},\sigma_{i}}(\lim_{t \to \infty}\upsilon_t^\epsilon) \leq \upsilon_0^\epsilon(\oslash) = c + \epsilon$, as desired. 
\end{proof}

We now invoke the result of Martin \cite{Martin75} on Borel determinacy of perfect information games. To do so, we endow $[T]$ with its relative topology as a subspace of the product space $(R \cup A)^{\N}$, where $R \cup A$ is given its discrete topology. One can then check that Player I's winning set in $G_i(f_i,c)$ is a Borel subset of $[T]$. It follows that  for each $c \in (0,1]$ the game $G_i(f_i,c)$ is determined: either Player~I has a winning strategy in the game or Player~II does. We arrive at the following conclusion.
   
\begin{proposition}\label{prop:det}
If $v_i(f_i) < c$, then Player II has a winning strategy in $G_i(f_i,c)$. If $c< v_i(f_i)$, then Player I has a winning strategy in $G_i(f_i,c)$.
\end{proposition}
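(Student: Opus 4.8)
The plan is to deduce this proposition directly from the Borel determinacy of $G_i(f_i,c)$---established in the paragraph preceding the statement via Martin \cite{Martin75}---together with Propositions~\ref{prop:PlayerI} and~\ref{prop:PlayerII}, used in contrapositive form. Determinacy tells us that in each game $G_i(f_i,c)$ exactly one of the two players has a winning strategy, so it suffices to rule out the ``wrong'' player in each of the two regimes for $c$. Throughout I would freely use the (routine) monotonicity of the minmax operator under pointwise domination: if $g \leq f_i$ then $v_i(g) \leq v_i(f_i)$, since $\E_{\sigma_{-i},\sigma_i}(g) \leq \E_{\sigma_{-i},\sigma_i}(f_i)$ for every profile and the $\inf\sup$ preserves this.

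First I would treat the case $v_i(f_i) < c$. By determinacy, either Player~I or Player~II has a winning strategy. If Player~I had one, Proposition~\ref{prop:PlayerI} would produce a limsup function $g \leq f_i$ with $c \leq v_i(g)$; monotonicity then forces $c \leq v_i(g) \leq v_i(f_i)$, contradicting $v_i(f_i) < c$. Hence Player~I cannot win, and determinacy hands the victory to Player~II.

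Next I would treat the case $c < v_i(f_i)$ symmetrically. If Player~II had a winning strategy, then for an arbitrary $\epsilon > 0$ Proposition~\ref{prop:PlayerII} would produce a limsup function $g \geq f_i$ with $v_i(g) \leq c + \epsilon$; monotonicity now gives $v_i(f_i) \leq v_i(g) \leq c + \epsilon$. As $\epsilon > 0$ was arbitrary, letting $\epsilon \to 0$ yields $v_i(f_i) \leq c$, contradicting $c < v_i(f_i)$. Thus Player~II cannot win, and determinacy again forces Player~I to have a winning strategy.

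The substance of the argument has already been spent in the two preceding propositions and in verifying the applicability of Martin's theorem, so I do not anticipate a genuine obstacle here. The only point requiring a modicum of care is the asymmetry between the two inputs: Proposition~\ref{prop:PlayerI} delivers the clean bound $c \leq v_i(f_i)$ from a single winning strategy of Player~I, whereas Proposition~\ref{prop:PlayerII} yields only $v_i(f_i) \leq c + \epsilon$ for each fixed $\epsilon$, so the second case genuinely needs the limit $\epsilon \to 0$ rather than a one-shot bound. Beyond this, the proof is simply a bookkeeping exercise combining determinacy with the contrapositives of the two propositions.
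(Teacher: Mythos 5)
Your proposal is correct and is exactly the argument the paper intends: the paper gives no separate proof of Proposition~\ref{prop:det}, deriving it implicitly from the Borel determinacy of $G_i(f_i,c)$ together with the contrapositives of Propositions~\ref{prop:PlayerI} and~\ref{prop:PlayerII}, precisely as you do (including the $\epsilon \to 0$ step needed on the Player~II side).
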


Theorems \ref{thrm:reg} and \ref{thrm:regfunc} follow from Propositions \ref{prop:PlayerI}, \ref{prop:PlayerII}, and \ref{prop:det}.\bigskip

\noindent\textbf{Proof of Theorem \ref{thrm:tailapprox}:} Take an $\ep > 0$. Without loss of generality, suppose that $f_i$ takes values in $[0,1]$.

By Proposition \ref{prop:v(Q)=1} we know that $v_{i}(Q_{i,\ep}(f_i)) = 1$. To obtain an approximation from below, use Theorem \ref{thrm:reg} to choose a closed set $C \subseteq Q_{i, \ep}(f_i)$ such that $1 - \ep \leq v_{i}(C)$, and define the function $g := (v_i(f_i) - \ep) \cdot 1_{C}$. Then $g \leq f_i$ and $v_i(f_i) - 2\ep \leq (v_i(f_i) - \ep)\cdot(1-\ep) \leq v_{i}(g)$. Since $C$ is closed, $g$ is upper semicontinuous.

By Proposition \ref{prop:v(Q)=1} we know that $v_{i}(U_{i,\ep}(f_i)) = 0$. To obtain an approximation from above, use Theorem \ref{thrm:reg} to choose an open set $O \supseteq U_i^{\ep}(f_i)$ such that $v_{i}(O) \leq \ep$, and define the function $g := v_i(f_i) + \ep + (1 - v_i(f_i) - \ep) \cdot 1_{O}$. Then $f_i \leq g \leq 1$ and $v_{i}(g) \leq v_i(f_i) + 2\ep$. Since $O$ is open, $g$ is lower semicontinuous. $\Box$

\end{document}